\documentclass[12pt]{article}
\usepackage{bbm}
\usepackage{latexsym}
\usepackage{mathrsfs}
\usepackage{graphicx}
\usepackage{subfigure}
\usepackage{amssymb}
\usepackage{amsmath}
\usepackage{amsthm}
\usepackage{color}
\usepackage{cite}
\usepackage{float}
\usepackage{indentfirst}
\usepackage{anysize}\marginsize{45mm}{45mm}{40mm}{50mm}
\usepackage{caption}
\usepackage{float}

\captionsetup{labelsep=period}
\captionsetup[longtable]{labelsep=period}
\usepackage{enumerate}


\textwidth 15.5 cm \textheight 24 cm
\hoffset -1.5cm \voffset-2.5cm

\newtheoremstyle{lemma}{\topsep}{\topsep}%
     {}
     {}
     {\bfseries}
     {}
     {0.1em}
     {\thmname{#1}\thmnumber{ #2}\thmnote{ #3}}
\theoremstyle{lemma}  

\newtheorem{theorem}{Theorem}[section]    
\newtheorem{lemma}[theorem]{Lemma}
\newtheorem{corollary}[theorem]{Corollary}

\numberwithin{equation}{section}

\usepackage{latexsym, bm}
\setlength{\baselineskip}{17pt}
\title
{Extremal anti-forcing numbers of perfect matchings of graphs\thanks{Supported by NSFC (grant no. 11371180 and 11401279).} }
\author{{Kai Deng$^{a,b}$, \  Heping Zhang$^{a,}$\thanks{
Corresponding author. \newline
\emph{E-mail address}: dengkai04@126.com (K. Deng), zhanghp@lzu.edu.cn (H. Zhang). }}\\
{\footnotesize $^{a}$School of Mathematics and Statistics, Lanzhou University, Lanzhou, Gansu 730000, P. R.~China}\\
{\footnotesize $^{b}$School of Mathematics and Information Science, Beifang University of Nationalities,}\\{\footnotesize Yinchuan, Ningxia 750027, P. R.~China}}
\date{}
\begin{document}

\maketitle
\begin{abstract}
The anti-forcing number of a perfect matching $M$ of a graph $G$ is the minimal number of edges not in $M$ whose removal to make $M$ as a unique perfect matching of the resulting graph. The set of anti-forcing numbers of all perfect matchings of $G$ is the anti-forcing spectrum of $G$. In this paper, we characterize the plane elementary bipartite graph whose minimum anti-forcing number is one.
We show that the maximum  anti-forcing number of a  graph is at most its cyclomatic number. In particular, we  characterize the  graphs with the maximum anti-forcing number achieving the upper bound, such extremal graphs are a class of plane bipartite graphs. Finally, we determine the anti-forcing spectrum of an even polygonal chain in linear time.

\vskip 0.1 in

\noindent \textbf{Keywords:} Perfect matching; Elementary bipartite graph;Cyclomatic number; Anti-forcing number; Anti-forcing spectrum; Even polygonal chain.

\end{abstract}

\section{Introduction}

We only consider finite and simple graphs. Let $G$ be a graph with vertex set $V(G)$ and edge set $E(G)$.
A {\em perfect matching} or 1-factor
of $G$ is a set of disjoint edges which covers all vertices of $G$.
A perfect matching of a graph coincides with a Kekul\'{e} structure in organic chemistry and a dimer in statistic physics.

The concept of ``forcing" has been used in many research fields in graph theory and combinatorics \cite{Che,Mahmoodian}. It appeared first in a perfect matching $M$ of a graph $G$ due to Harary et al. \cite{Harary}: If a subset $S$ of $M$ is not contained in other perfect matchings of $G$, then we say $S$ {\em forces} the  perfect matching $M$,
in other words, $S$ is called a \emph{forcing set} of $M$.
The minimum cardinality over all forcing sets of $M$
is  called the \emph{forcing number} of $M$.
The roots of those concepts can be found in an earlier chemical
literature due to Randi\'{c} and Klein  \cite{Klein},
under the name of  the \emph{innate degree of freedom} of a  Kekul\'{e} structure,
which plays an important role in the resonance theory in chemistry. In 1990's Zhang and Li \cite{Li} and Hansen and Zheng \cite{Hansen} determined independently the hexagonal systems with a forcing edge.
Afterwards Zhang and Zhang \cite{Zhang} characterized plane elementary bipartite graphs with a forcing edge. For more researches on matching forcing problems,
see \cite{Adams,Afshani,Jiang,JZ16,Pachter,Matthew,Wang,Ye,ZhangD}.


 Vuki\v{c}evi\'{c} and Trinajsti\'{c} \cite{VT1} introduced the \emph{anti-forcing number} of a graph $G$
as the smallest number of edges
whose removal results in a subgraph with a unique perfect matching,
denoted by $af(G)$.
So
a graph $G$ has a unique perfect matching if and only if its anti-forcing number is zero.
An edge $e$ of a graph $G$ is called an \emph{anti-forcing edge} if $G-e$ has a unique perfect matching.
As early as  1997 Li \cite{Li1}  showed that the hexagonal systems with an anti-forcing edge (under the name ``forcing single edge'') are truncated parallelograms.
Deng \cite{Deng1} gave a linear time algorithm to compute the anti-forcing number of benzenoid chains. Yang et al. \cite{YZL} showed that a fullerene has the anti-forcing number at least four.  For other works, see \cite{VT2,  Deng2, ZhangQQ}.

Recently, Lei et al. \cite{Zh2} defined the anti-forcing number of a perfect matching $M$ of a graph $G$
as the minimal number of edges not in $M$ whose removal to make $M$ as a single perfect matching of the resulting graph, denoted by $af(G,M)$.
By this definition,
$af(G)$ is the smallest anti-forcing number over all perfect matchings
of $G$,  called the \emph{minimum anti-forcing number} of $G$. Also let $Af(G)$ denote  the largest anti-forcing number over all perfect matchings
of $G$, called the \emph{maximum anti-forcing number} of $G$. They also showed that the maximum anti-forcing number of a hexagonal system equals its fries number. The present authors \cite{DZ} considered the \emph{anti-forcing spectrum} $\text{Spec}_{af}(G)$ as the set of anti-forcing
numbers of perfect matchings in $G$.

Let $M$ be a perfect matching of a graph $G$.
A subset $S\subseteq E(G)\backslash M$ is called an \emph{anti-forcing set} of $M$ if $M$ is the unique perfect matching of $G-S$.
A cycle $C$ of $G$ is called an \emph{$M$-alternating cycle} if  the edges of $C$ appear alternately in $M$ and $E(G)\backslash M$.
If $C$ is an $M$-alternating cycle of $G$, then the symmetric difference $M\triangle C:=(M-C)\cup(C-M)$ is another perfect matching of $G$. In this case a cycle  is always regarded as its edge set.

\begin{theorem}\label{antiforcingset}\cite{Zh2}{\bf .}
An edge set $S$ of a graph $G$ is an anti-forcing set of a perfect matching $M$ of $G$ if and only if
$S$ contains at least one edge of every $M$-alternating cycle of $G$.
\end{theorem}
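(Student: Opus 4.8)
The plan is to prove both implications by contradiction, using in each case the elementary fact that the symmetric difference of two perfect matchings decomposes into vertex-disjoint alternating cycles. Since the two directions are essentially dual, I would set up the same machinery for both.

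For necessity ($\Rightarrow$), suppose $S$ is an anti-forcing set of $M$ but that some $M$-alternating cycle $C$ has no edge in $S$. Because $S \subseteq E(G)\setminus M$, every edge of $M$ survives in $G-S$; and since $S\cap C=\emptyset$, the non-$M$ edges of $C$ survive as well. Hence every edge of $M\triangle C$ lies in $G-S$, so $M\triangle C$ is a perfect matching of $G-S$ distinct from $M$, contradicting the uniqueness of $M$ in $G-S$. This direction is immediate once one recalls (as the excerpt already notes) that $M\triangle C$ is again a perfect matching whenever $C$ is $M$-alternating.

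For sufficiency ($\Leftarrow$), assume $S$ meets every $M$-alternating cycle but that $G-S$ admits a second perfect matching $M'\neq M$. The key step is to examine $M\triangle M'$: since $M$ and $M'$ are both perfect matchings, each vertex is incident to $0$ or $2$ edges of the symmetric difference, so $M\triangle M'$ is a vertex-disjoint union of even cycles, each alternating between $M\setminus M'$ and $M'\setminus M$. As $M\neq M'$, at least one such cycle $C$ exists, and it is $M$-alternating. Because $M,M'\subseteq E(G)\setminus S$, every edge of $C$ lies in $G-S$, so $S$ contains no edge of $C$, contradicting the hypothesis.

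The argument is short and there is no serious obstacle; the only point requiring care is the bookkeeping about which edges persist in $G-S$, namely using that $S$ avoids $M$ (so $M$-edges survive) together with the relevant hypothesis that $S$ avoids the particular cycle (so its remaining edges survive). The structural observation that $M\triangle M'$ is a disjoint union of alternating cycles follows in one line from the degree condition and may simply be cited.
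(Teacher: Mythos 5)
Your proof is correct: both directions are handled properly, with the necessity following from the fact that $M\triangle C$ is a perfect matching surviving in $G-S$, and the sufficiency from decomposing $M\triangle M'$ into $M$-alternating cycles disjoint from $S$. Note that the paper itself gives no proof of this statement---it is quoted from the reference [Zh2]---and your argument is the standard one used there, so there is nothing further to compare.
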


Let $M$ be a perfect matching of a graph $G$.
A set $\mathcal{A}$ of $M$-alternating cycles of $G$ is called a \emph{compatible $M$-alternating set} if any two members of $\mathcal{A}$
either are disjoint or intersect only at edges in $M$. Let $c^{\prime}(M)$ denote the cardinality of a maximum compatible $M$-alternating set
of $G$. By Theorem \ref{antiforcingset},
we have $af(G,M)\geq c^{\prime}(M)$.
For plane bipartite graphs, the equality holds.

\begin{theorem}\label{compatibleset}\cite{Zh2}{\bf .}
Let $G$ be a plane bipartite graph with a perfect matching $M$. Then $af(G,M)=c^{\prime}(M)$.
\end{theorem}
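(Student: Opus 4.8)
The plan is to reduce the identity to a min--max theorem for planar digraphs. Since the inequality $af(G,M)\ge c'(M)$ is already in hand, I only need $af(G,M)\le c'(M)$, and I will in fact obtain both directions at once from duality. I may assume $G$ is connected (otherwise argue componentwise, as both quantities are additive over components) and fix a plane embedding. First I would pass from the undirected picture to a digraph: properly $2$-colour $V(G)$ into a white class and a black class, orient every edge of $M$ from its white end to its black end, and orient every edge of $E(G)\setminus M$ from black to white. Call the resulting plane digraph $D$. A short alternation argument shows that a cycle of $G$ is $M$-alternating if and only if it is a directed cycle of $D$: along any directed cycle one leaves each white vertex through its unique out-arc (its $M$-edge) and enters each black vertex through its unique in-arc (its $M$-edge), so the edges alternate between $M$ and $E(G)\setminus M$, and the converse is the same computation read backwards.

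The crucial structural observation is that in $D$ every white vertex has out-degree $1$ and every black vertex has in-degree $1$, the distinguished arc being in each case the incident $M$-edge. Consequently any directed cycle through a vertex $v$ must use the $M$-arc at $v$, so two directed cycles that meet at a vertex necessarily share an $M$-arc. From this I would deduce the key equivalence: two $M$-alternating cycles are compatible in the sense of the statement \emph{if and only if} the corresponding directed cycles share no arc of $E(G)\setminus M$. Indeed, compatibility forbids common non-$M$ edges by definition; conversely, if two directed cycles share no non-$M$ arc, then every common vertex lies on a common $M$-arc, so their intersection consists solely of $M$-edges. Combining this with Theorem \ref{antiforcingset}, I can restate both quantities purely in terms of $D$: $af(G,M)$ is the minimum number of arcs of $E(G)\setminus M$ meeting every directed cycle, and $c'(M)$ is the maximum number of directed cycles that pairwise share no arc of $E(G)\setminus M$.

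These are exactly the two sides of a capacitated covering--packing problem for directed cycles, which I would resolve through planar duality. Assign capacity $1$ to each non-$M$ arc and capacity $+\infty$ (in practice a large integer) to each $M$-arc; since every directed cycle contains a non-$M$ arc, a minimum-capacity feedback arc set avoids the $M$-arcs, and its weight therefore equals $af(G,M)$. Passing to the planar dual $D^{*}$ (dualising each arc and rotating its orientation by a fixed rule) turns directed cycles of $D$ into directed cuts of $D^{*}$ and feedback arc sets of $D$ into dijoins of $D^{*}$, carrying the capacities along unchanged. The weighted Lucchesi--Younger theorem then equates the minimum weight of a dijoin of $D^{*}$ with the maximum number of directed cuts in which each arc is used at most its capacity many times. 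Translating back, the minimum equals $af(G,M)$ and the maximum equals the largest family of directed cycles using each non-$M$ arc at most once, that is $c'(M)$; as no directed cycle can be repeated (each contains a capacity-$1$ arc), this family is a genuine set of distinct pairwise compatible cycles. Hence $af(G,M)=c'(M)$.

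The main obstacle I anticipate is the clean invocation of the duality machinery rather than any single combinatorial step: I must set up the arc-by-arc correspondence between directed cycles of $D$ and directed cuts (bonds) of $D^{*}$ with the correct induced orientation, verify that hitting sets and capacities transfer faithfully, and apply the weighted (total dual integral) form of Lucchesi--Younger so that the optimal packing is guaranteed to be integral. Secondary care is needed at the boundary cases---disconnected $G$, the choice of outer face, and the edges lying on no $M$-alternating cycle (which contribute $0$ to both sides)---and in confirming that the large capacities on the $M$-arcs genuinely force the cover to consist only of non-$M$ edges, thereby matching the definition of $af(G,M)$.
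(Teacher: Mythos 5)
This theorem is stated in the paper as an imported result, cited from \cite{Zh2} without proof, so there is no in-paper argument to compare against. Your proof --- orienting $M$-edges white-to-black and the rest black-to-white so that $M$-alternating cycles become exactly the directed cycles, observing that cycles meeting at a vertex must share the $M$-arc there (so compatibility is the same as sharing no non-$M$ arc), and then equating the capacitated feedback-arc-set minimum with the capacitated dicycle packing via planar duality and the weighted Lucchesi--Younger theorem --- is correct and is essentially the argument given in \cite{Zh2}, which realizes the large capacities on $M$-arcs by parallel duplication rather than by the weighted TDI form.
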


Throughout this paper all the bipartite graphs are given a proper
black and white coloring:  any two adjacent vertices receive different colors.
An edge of a graph  $G$ is  \emph{allowed} if it belongs to a perfect matching of $G$
and \emph{forbidden} otherwise.
$G$ is said to be \emph{elementary} if all its
allowed edges form a connected subgraph of $G$. It is well-known that a connected bipartite graph is elementary if and only if each edge is allowed \cite{Plummer}.

An elementary bipartite graph has the so-called ``\emph{bipartite ear decomposition}".
Let $x$ be an edge.
Join the end vertices of $x$
by a path $P_1$ of odd length (the so-called ``first ear").
We proceed inductively to build a sequence of bipartite graphs as follows: If $G_{r-1}=x+P_{1}+P_{2}+\cdots+P_{r-1}$
has already been constructed, add the $r$-th ear $P_r$ (a path of odd length) by joining any two vertices in different colors of $G_{r-1}$
such that $P_r$ has no other vertices in common with $G_{r-1}$.
The decomposition $G_{r}=x+P_{1}+P_{2}+\cdots+P_{r}$ will be called a bipartite ear decomposition of $G_r$.

\begin{theorem}\label{eardecomposition}\cite{Plummer1}{\bf .}
A bipartite graph is elementary if and only if it has a bipartite ear decomposition.
\end{theorem}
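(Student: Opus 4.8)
The plan is to prove the two implications separately. I will freely use the standard reformulation that a connected bipartite graph is elementary precisely when every edge is allowed, i.e.\ lies in some perfect matching, and that (equivalently) for a fixed perfect matching $M$ every edge lies on an $M$-alternating cycle. The implication from a bipartite ear decomposition to elementarity is the routine direction; extracting a decomposition from an elementary graph is the substantive one.

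For the ``if'' part I would induct on the number of ears $r$. The base graph $G_1=x+P_1$ is an even cycle, since the single edge $x$ together with the odd path $P_1$ has even length, and an even cycle is elementary. Suppose $G_{r-1}$ is elementary and let the new ear be $P_r=u\,w_1\,w_2\cdots w_{2k}\,v$, an odd path whose interior $w_1,\dots,w_{2k}$ is disjoint from $G_{r-1}$ and whose ends $u,v$ have different colors. Two observations finish the step. First, every edge of $G_{r-1}$ stays allowed in $G_r$: a matching of $G_{r-1}$ witnessing it extends over the $2k$ interior vertices of $P_r$, which form an even path and hence carry a perfect matching. Second, each edge $f=w_iw_{i+1}$ of $P_r$ is allowed: deleting $f$ leaves two interior segments, and a short parity check on $i$ shows that either both $u$ and $v$ get absorbed by matching edges internal to $P_r$ (then extend by any perfect matching of $G_{r-1}$) or both must be matched inside $G_{r-1}$ (which every perfect matching of $G_{r-1}$ does). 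As $G_r$ is connected, every edge being allowed makes $G_r$ elementary.

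For the ``only if'' part, fix a perfect matching $M$ of the elementary graph $G$; the single-edge case is trivial, so assume $G$ contains a cycle. I would build the decomposition forward. Pick a starting edge $x\in M$; since $G$ is elementary, $x$ lies on an $M$-alternating cycle $C_1$, and $P_1:=C_1-x$ serves as the first ear. Inductively, suppose $G_{r-1}=x+P_1+\cdots+P_{r-1}$ is an elementary subgraph properly contained in $G$. The task is to produce the next ear: an odd path of $G$ meeting $G_{r-1}$ exactly in its two (oppositely colored) endpoints and keeping the subgraph elementary. \emph{This existence step is the main obstacle.} I would obtain it from connectivity plus the elementary hypothesis: $G$ being connected, some edge leaves $G_{r-1}$; since every edge of $G$ lies in a perfect matching, I can grow an $M$-alternating path out of that edge and route it back to a vertex of $G_{r-1}$, the interior being matched so that the path qualifies as an ear and the enlarged subgraph is again elementary.

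A cleaner alternative for the converse is the classical correspondence with strongly connected digraphs: contracting the edges of $M$ and orienting the rest produces a digraph $D$ that is strongly connected exactly when $G$ is elementary, and directed ears of $D$ pull back to bipartite ears of $G$; an ear decomposition of a strongly connected digraph then transfers directly. In either formulation the crux is identical---guaranteeing at each stage an admissible ear joining two oppositely colored vertices of the current elementary subgraph---and once that existence lemma is established the induction closes at once. As the statement is classical and cited above, I would rely on that reference for the full verification of this existence step and present only the constructions sketched here.
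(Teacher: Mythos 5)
The paper itself gives no proof of this theorem---it is imported verbatim from Lov\'asz and Plummer \cite{Plummer1}---so there is no internal argument to compare yours against; judged on its own terms, your proposal has two genuine gaps. In the ``if'' direction, your case analysis for an edge $f$ of the new ear $P_r=u\,w_1\cdots w_{2k}\,v$ fails when $f$ sits at an odd position along $P_r$ (for instance $f=uw_1$, or $f=w_2w_3$). Since the interior vertices of $P_r$ have degree $2$, a parity count shows that any perfect matching containing such an $f$ is \emph{forced} to cover both ends $u$ and $v$ by edges of $P_r$; you then cannot ``extend by any perfect matching of $G_{r-1}$,'' as that would cover $u$ and $v$ twice. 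What is actually needed is a perfect matching of $G_{r-1}-u-v$, and its existence for oppositely colored $u,v$ is precisely one of the nontrivial characterizations of elementary bipartite graphs (a connected bipartite graph with color classes $U,W$ is elementary iff $G-u-w$ has a perfect matching for all $u\in U$, $w\in W$). You neither state nor prove this; as written the step is incorrect, and the repair requires importing (or carrying through the induction) exactly the lemma that does the work in this direction.

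In the ``only if'' direction you correctly isolate the crux---at each stage one must exhibit an odd path of $G$ meeting the current subgraph $G_{r-1}$ in exactly two oppositely colored vertices, and show that a maximal subgraph built this way is all of $G$---but you then explicitly defer its verification to the cited reference. The sketch of ``growing an $M$-alternating path out of an edge leaving $G_{r-1}$ and routing it back'' is the right intuition, but it leaves unverified that the path re-enters $G_{r-1}$ at a vertex of the opposite color, that its interior avoids $G_{r-1}$ entirely, and that the resulting ear has odd length; the standard argument obtains all of this by comparing a perfect matching through the escaping edge with one extending a perfect matching of $G_{r-1}$ and extracting a segment of their symmetric difference. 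Since this is the substantive half of the equivalence, relying on the reference for it means the proposal does not amount to a proof of the theorem.
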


A bipartite ear decomposition  $G =x+P_{1}+P_{2}+\cdots+P_{r}$
can be represented   by a sequence of  graphs ($G_0,G_1,\ldots,G_r(=G)$),
where $G_0=x$ and $G_{i}=G_{i-1}+P_{i}$ for $1\leq i\leq r$.
We can see that the number of ears equals $|E(G)|-|V(G)|+1$,
i.e., the \emph{cyclomatic number} of $G$, denoted by $r(G)$.

A bipartite ear decomposition ($G_1(=x+P_1),\ldots,G_r(=G)$) of
a plane elementary bipartite graph $G$ is called
a \emph{reducible face decomposition} if $G_1$ is the boundary of an interior
face of $G$ and the $i$-th ear $P_i$ lies in the exterior of $G_{i-1}$ such that $P_i$
and the part of the periphery of $G_{i-1}$ bound an interior face of $G$ for all $2\leq i\leq r$.

\begin{theorem}\label{face}\cite{Zhang}{\bf .}
Let $G$ be a plane  bipartite graph other than $K_2$.
Then $G$ is elementary if and only if $G$ has a reducible face decomposition starting with the boundary of any interior face of $G$.
\end{theorem}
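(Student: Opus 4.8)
I prove the equivalence by treating the two implications separately. The sufficiency is immediate: a reducible face decomposition is by definition a bipartite ear decomposition $(G_1,\dots,G_r=G)$, so Theorem~\ref{eardecomposition} gives at once that $G$ is elementary. All of the real work lies in the necessity, and I would attack it by induction on the number of interior faces of $G$, which for a connected plane graph equals the cyclomatic number $r(G)$. Recall that an elementary bipartite graph other than $K_2$ is $2$-connected, so every face boundary---in particular the periphery, i.e. the boundary of the unbounded face---is a cycle, and since $G$ is bipartite each such cycle is even. When $r(G)=1$ the graph $G$ is a single even cycle; its unique interior face is bounded by $G$ itself, and writing $G=x+P_1$ for any edge $x$ exhibits the required decomposition, which trivially starts at that face.

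For the inductive step I fix the prescribed interior face $f$. Rather than building the decomposition outward from $\partial f$, it is cleaner to peel a face off the periphery and then invoke the induction hypothesis. The plan is to locate an interior face $s\neq f$ that is incident with the unbounded face and whose \emph{peripheral part} $P_s$---the set of edges of $\partial s$ lying on the periphery of $G$---is a single path, all of whose internal vertices have degree $2$ in $G$ and whose two end vertices have opposite colours (so that $P_s$ has odd length). Deleting the internal vertices and edges of $P_s$ yields a plane bipartite graph $G'$ with $r(G)-1$ interior faces that still contains $f$ as an interior face. Provided $G'$ is again elementary, the induction hypothesis supplies a reducible face decomposition of $G'$ beginning at $\partial f$; re-appending $P_s$ as the final ear---legitimate precisely because $P_s$ is an odd path between differently coloured vertices---extends it to a reducible face decomposition of $G$ that starts at $\partial f$, as desired.

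The crux, and the step I expect to be the principal obstacle, is to \emph{guarantee the existence} of such a removable peripheral face $s\neq f$ and to verify that the resulting $G'$ remains elementary. Two things must be arranged simultaneously. First, the geometric part: walking around the periphery of $G$, the interior faces incident with it cut the periphery into arcs, and I must show one can always select a face meeting the periphery in a single arc with degree-$2$ interior vertices; the assumption $r(G)\ge 2$ is what lets me avoid $f$ itself. The delicate point here is the parity condition---that the shared path can be taken of odd length---since a face sharing an even-length path with the periphery would not yield a bipartite ear; controlling this is where the alternating (even-cycle) structure of the faces and the coloured structure of the boundary must be exploited carefully.

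Second, the preservation of elementarity: here I would use the characterisation quoted in the excerpt, that a connected bipartite graph is elementary if and only if every edge lies in a perfect matching, together with the fact that every edge of a plane elementary bipartite graph is allowed. Deleting an odd peripheral path whose internal vertices have degree $2$ leaves the remaining graph $2$-connected and does not destroy the allowedness of the surviving edges, so $G'$ is again elementary with a cycle as its new periphery. The genuinely technical content is the case analysis ruling out the degenerate configurations---when the peripheral part fails to be a single path, when its removal would disconnect $G$ or reduce the degree of some cut vertex below two, or when every peelable candidate is forced to be $f$---and it is exactly in disposing of these cases that the three hypotheses (plane, bipartite, elementary) are each brought to bear.
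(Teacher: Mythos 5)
You should first be aware that the paper offers no proof of this statement: Theorem~\ref{face} is quoted verbatim from \cite{Zhang}, so there is nothing in-paper to compare against. On its own terms, your sufficiency direction is fine (an RFD is a bipartite ear decomposition, so Theorem~\ref{eardecomposition} applies), and your strategy for necessity --- induct on the number of interior faces by peeling off a peripheral face $s\neq f$ whose intersection with the outer cycle is a single odd path with degree-$2$ internal vertices, then re-attach that path as the last ear --- is indeed the standard route to this result.

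The difficulty is that everything you defer as ``the principal obstacle'' and ``the genuinely technical content'' is the entire mathematical substance of the theorem, and none of it is carried out. Three claims are used but not proved. (i) \emph{Existence}: that $G$ has an interior face $s\neq f$ meeting the periphery in a single nonempty arc. Avoiding the prescribed face $f$ requires an ``at least two such faces'' statement, and no argument for even one is given. (ii) \emph{Parity}: the peripheral arc of $s$ and the complementary arc of $\partial s$ have the same parity (as $\partial s$ is an even cycle), and the arc's endpoints receive different colours if and only if both arcs are odd; there is no a priori reason the arc is odd, and proving it is exactly where elementarity (every edge allowed, every face resonant) must be exploited. You flag this as ``delicate'' but do not do it. (iii) \emph{Preservation of elementarity}: the assertion that deleting the path ``does not destroy the allowedness of the surviving edges'' is not an argument. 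A perfect matching $M$ of $G$ containing a given edge $e$ of $G'$ may match both endpoints of $P_s$ into the interior of $P_s$, in which case $M\cap E(G')$ is not a perfect matching of $G'$; repairing this requires switching along an $M$-alternating cycle through $\partial s$, whose existence again rests on the resonance of faces, which you have not established. Until (i)--(iii) are supplied, the proposal is a plan for a proof rather than a proof.
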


In the next section, we characterize the plane elementary bipartite graphs with anti-forcing edges by using reducible face decomposition.
In section 3,
we show that the maximum  anti-forcing number
of a connected graph with a perfect matching
is at most its cyclomatic number. In particular we characterize the graphs
with the maximum anti-forcing number achieving this cyclomatic number in terms of bipartite ear decomposition. We shall see that such extremal graphs are a special type of plane bipartite graphs, and have a unique perfect matching whose anti-forcing number is maximum.
In Section 4, we show that an even polygonal chain including benzenoid chain has the continuous anti-forcing spectrum.
So we can determine the  anti-forcing spectrum by  designing  linear algorithms to compute the minimum and
maximum anti-forcing numbers of an even polygonal chain.

\section{Anti-forcing edge}

The \emph{$Z$-transformation graph} $Z(G)$ of a plane bipartite graph $G$ is defined as the graph whose vertices represent the perfect matchings
of $G$ where two vertices are adjacent if and only if the symmetric difference of the corresponding two perfect matchings just forms the
boundary of an interior face of $G$.
A face of $G$ is said to be \emph{resonance} if its boundary is
an $M$-alternating cycle
with respect to a perfect matching $M$ of $G$.
By using reducible face decomposition,
Zhang and Zhang \cite{Zhang} described those plane elementary bipartite graphs whose $Z$-transformation graphs have a vertex of degree one
and characterized the plane elementary bipartite graphs with a forcing edge.

\begin{theorem}\label{forcingedge} \cite{Zhang}{\bf .}
A plane elementary bipartite graph $G$ has a forcing edge if and only if $G$ has a perfect matching $M$ such that
 $G$ has exactly two $M$-resonance faces (the exterior face is allowed) and their boundaries are intersecting.
 Further each common edge in $M$ on the two $M$-resonance faces is a forcing edge of $G$.
\end{theorem}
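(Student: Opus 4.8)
The plan is to read the resonant faces off the $M$-alternating cycles through a fixed orientation of $G$. Since the graphs are properly $2$-coloured, I orient every edge of $M$ from its black to its white end and every other edge from white to black; then the $M$-alternating cycles are exactly the directed cycles, and a face (interior, or the exterior one) is $M$-resonant precisely when its facial boundary is a directed cycle, necessarily clockwise or counterclockwise. Because an elementary bipartite graph other than $K_2$ is $2$-connected, every edge lies on the boundary of exactly two faces; and since $G\neq K_2$ is elementary, $M$ is not the unique perfect matching, so at least one directed cycle exists. I will use the forcing analogue of Theorem \ref{antiforcingset}: a subset of $M$ is a forcing set of $M$ if and only if it meets every $M$-alternating cycle. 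Hence $e$ is a forcing edge exactly when $G$ has a perfect matching $M$ with $e\in M$ such that every directed cycle passes through the arc $e$.

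The engine is the standard peeling fact for plane digraphs: the interior of any directed cycle $C$ contains a facial directed cycle of the same orientation as $C$, and (passing to the sphere, where $C$ bounds the complementary disc with the opposite orientation) its exterior contains a facial directed cycle of the opposite orientation. In particular, whenever a directed cycle exists there is at least one clockwise and at least one counterclockwise $M$-resonant face. For the forward implication, suppose $e$ is a forcing edge, so every directed cycle---in particular every resonant facial boundary---contains $e$; by $2$-connectivity at most two faces contain $e$, giving at most two resonant faces. On the other hand $M$ is not unique, so the peeling fact supplies at least two resonant faces, one of each orientation. Hence there are exactly two, a clockwise face $s_1$ and a counterclockwise face $s_2$, both containing $e$; they are therefore the two faces incident with $e$, their boundaries meet along $e$, and $e\in M$ is a common edge of the two.

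For the converse, assume $M$ has exactly two resonant faces with intersecting boundaries. By the peeling fact both orientations occur, so one is clockwise, say $s_1$, and the other counterclockwise, $s_2$, and each is the unique resonant face of its orientation. Let $e$ be a common edge of $\partial s_1,\partial s_2$ lying in $M$; such an edge exists because, $s_1$ and $s_2$ being adjacent with alternating boundaries, any shared vertex forces its unique $M$-edge onto both boundaries. I claim every directed cycle $C$ contains $e$. Taking $C$ counterclockwise, its interior contains a counterclockwise resonant face, which must be $s_2$, so $s_2$ lies inside $C$; its exterior contains a clockwise resonant face, which must be $s_1$, so $s_1$ lies outside $C$. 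Since $s_1$ and $s_2$ share the edge $e$ while lying on opposite sides of $C$, the edge $e$ must lie on $C$; the clockwise case is symmetric. Thus $\{e\}$ meets every $M$-alternating cycle and forces $M$, so $e$ is a forcing edge. The same inside/outside argument applies verbatim to every common edge of $\partial s_1,\partial s_2$ lying in $M$, which yields the final assertion.

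The step I expect to require the most care is the peeling fact together with its sphere version---identifying $M$-resonant faces with facial directed cycles and guaranteeing that both orientations are realised---since the counting bound, the lower bound of two, and the inside/outside separation all rest on it; I would prove it by an innermost-region argument, or equivalently extract it from the reducible face decomposition of Theorem \ref{face} applied to the interior of a directed cycle. A secondary point to pin down is that the two resonant faces necessarily share an edge of $M$ rather than merely an edge, which follows from the alternating structure of their common boundary.
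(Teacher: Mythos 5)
The paper does not prove this statement; it is quoted from \cite{Zhang}, where the argument goes through reducible face decompositions and the $Z$-transformation graph. Your route --- orienting $M$-edges black-to-white and the rest white-to-black so that $M$-alternating cycles become directed cycles, and then locating resonant faces on both sides of any directed cycle --- is a legitimate and essentially standard alternative (it is the ``proper/improper $M$-alternating cycle'' machinery from the same source), and granted your peeling fact, both implications and the ``further'' clause are correctly deduced: the counting bound from $2$-connectivity, the existence of a shared $M$-edge at any shared vertex, and the inside/outside separation forcing $e$ onto every alternating cycle are all sound.

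Two points need repair, one of substance. First, your peeling fact is \emph{false as literally stated}: you classify a resonant face by whether its boundary cycle is clockwise or counterclockwise in the plane, but then for $G$ an even cycle both faces are resonant with the \emph{same} directed boundary, so they receive the same label, and ``the exterior of $C$ contains a facial directed cycle of the opposite orientation'' fails. The classification must instead be by which side of its directed boundary the face lies on (equivalently, orient each facial walk with the face on the left, or use Zhang--Zhang's proper/improper dichotomy); with that convention the interior and exterior faces of a single cycle fall into different classes, both classes are always realised, and every step of your argument survives verbatim. Second, the oriented peeling lemma is the entire content of the proof and is strictly stronger than the unoriented Lemma \ref{resonantface} that the present paper quotes; your one-line gesture at an ``innermost-region argument'' does not discharge it. It is a true lemma (interior of a proper $M$-alternating cycle contains a proper $M$-resonant face, proved in \cite{Zhang} via reducible face decompositions), but as written you have reduced the theorem to an unproved statement that is not available in this paper, so the proof is incomplete until that lemma is either proved or explicitly cited in its oriented form.
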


In the following, we characterize plane elementary bipartite graphs
with an anti-forcing edge.
The following lemma is useful.

\begin{lemma}\label{onedegree}\cite{Plummer}{\bf .}
Let $G$ be a bipartite graph with a unique perfect matching.
Then $G$ must contain at least one vertex of degree 1 in each color class.
\end{lemma}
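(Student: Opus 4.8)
The plan is to reduce the statement to the well-known fact that a perfect matching $M$ is unique if and only if $G$ has no $M$-alternating cycle, and then to extract the required degree-one vertices from an acyclic orientation. Let $(X,Y)$ be the bipartition into the black and white classes and let $M$ be the unique perfect matching. First I would recall that if $G$ had an $M$-alternating cycle $C$, then $M\triangle C$ would be a second perfect matching (this is exactly the symmetric-difference observation stated before Theorem \ref{antiforcingset}); hence uniqueness of $M$ forces $G$ to contain no $M$-alternating cycle.

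Next I would orient $G$ so that alternating cycles become directed cycles: direct every matching edge from its white endpoint to its black endpoint, and every non-matching edge from its black endpoint to its white endpoint. The key observation is that along any $M$-alternating cycle this orientation is consistent (at each black vertex the incident matching edge enters and the incident non-matching edge leaves, and symmetrically at each white vertex), so the $M$-alternating cycles of $G$ are precisely the directed cycles of this orientation. Since $G$ has no $M$-alternating cycle, the orientation is acyclic.

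A finite acyclic digraph on a nonempty vertex set has at least one sink (out-degree $0$) and at least one source (in-degree $0$); note $G$ is nonempty because it has a perfect matching. I would then read off the color and degree of these special vertices. Every white vertex has its matching edge oriented outward, so its out-degree is at least one; hence no white vertex is a sink, and every sink must be black. But a black vertex's only outgoing arcs are its non-matching edges, so a black sink has no non-matching edge and therefore degree exactly one. Symmetrically, every black vertex has its matching edge oriented inward, so no black vertex is a source, and a white source has no incoming (non-matching) edge and hence degree one. Thus the sink supplies a degree-one vertex in the black class and the source a degree-one vertex in the white class, which is the claim.

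The argument has no serious obstacle; the only point needing care is verifying that the chosen orientation makes $M$-alternating cycles correspond exactly to directed cycles, together with the converse that any directed cycle must alternate between matching and non-matching edges (this is automatic here, since matching edges are the only white-to-black arcs and non-matching edges the only black-to-white arcs). As an alternative to the orientation step one can argue by contradiction with a pigeonhole walk: assuming every white vertex had degree at least two, follow a walk that alternately leaves a white vertex along a non-matching edge and then traverses the unique matching edge at the black vertex reached; this alternating walk, living in a finite graph, must eventually revisit a vertex and close into an $M$-alternating cycle. There the subtlety is ensuring that the first repetition yields a genuine cycle rather than merely a closed walk, which the orientation viewpoint sidesteps entirely.
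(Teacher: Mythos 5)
Your proof is correct. Note that the paper itself offers no proof of this lemma---it is quoted from Lov\'{a}sz and Plummer's \emph{Matching Theory}---so there is no internal argument to compare against; your acyclic-orientation argument (matching edges white$\to$black, non-matching edges black$\to$white, so that $M$-alternating cycles are exactly the directed cycles, whence uniqueness of $M$ forces acyclicity and a source/sink pair of the right colors and degree one) is in fact the standard textbook proof of this result, and every step checks out: the correspondence between alternating and directed cycles is verified in both directions, the sink must be black and the source white because each vertex's matching edge contributes one outgoing arc to its white end and one incoming arc to its black end, and the degree-one conclusion follows since all remaining incident edges of a black (resp.\ white) vertex are outgoing (resp.\ incoming). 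Your closing remark about the pitfall in the na\"{\i}ve alternating-walk alternative (a closed walk need not immediately be a cycle) is also well taken.
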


By Theorem \ref{antiforcingset}, the following result is immediate.

\begin{lemma}\label{pass}{\bf .}
Let $G$ be a graph with a perfect matching $M$. Then $e\in E(G)\setminus M$ is an anti-forcing edge
if and only if each $M$-alternating cycle passes through $e$.
\end{lemma}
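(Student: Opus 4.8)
The plan is to derive this directly from Theorem~\ref{antiforcingset} by restricting attention to singleton anti-forcing sets. The first step is to reconcile the two notions of ``anti-forcing edge'' in play: the global definition from the introduction requires that $G-e$ have a unique perfect matching, while the present statement fixes a particular matching $M$ with $e\in E(G)\setminus M$. The bridging observation is that, because $e\notin M$, the matching $M$ survives the deletion of $e$ and remains a perfect matching of $G-e$. Hence $G-e$ has a unique perfect matching if and only if that unique matching is $M$ itself, which is precisely the condition that the singleton $\{e\}$ be an anti-forcing set of $M$.

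Once this reformulation is in place, both directions fall out of Theorem~\ref{antiforcingset} applied to $S=\{e\}$. For the forward direction, suppose $e$ is an anti-forcing edge; then by the bridging observation $\{e\}$ is an anti-forcing set of $M$, so by Theorem~\ref{antiforcingset} it must contain an edge of every $M$-alternating cycle of $G$. As $\{e\}$ has only the single element $e$, every $M$-alternating cycle must pass through $e$. For the converse, if every $M$-alternating cycle passes through $e$, then $\{e\}$ trivially meets each $M$-alternating cycle, so Theorem~\ref{antiforcingset} certifies $\{e\}$ as an anti-forcing set of $M$; that is, $M$ is the unique perfect matching of $G-e$, and therefore $e$ is an anti-forcing edge.

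There is essentially no obstacle of substance here, which is why the paper advertises the result as immediate. The only point requiring any care is the identification of the unique perfect matching of $G-e$ with $M$, and this rests solely on the hypothesis $e\in E(G)\setminus M$. I would accordingly present the argument as a brief corollary of Theorem~\ref{antiforcingset}, stressing that all the content is carried by that theorem and that the singleton specialization merely translates the phrase ``$S$ meets every $M$-alternating cycle'' into ``$e$ lies on every $M$-alternating cycle.''
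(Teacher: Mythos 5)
Your argument is correct and is precisely the one the paper intends: the paper gives no explicit proof, stating only that the lemma is immediate from Theorem~\ref{antiforcingset}, and your specialization to $S=\{e\}$ together with the observation that $M$ survives in $G-e$ (since $e\notin M$) is exactly that immediate derivation. Nothing further is needed.
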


\begin{theorem}\label{antiforcingedge}{\bf .}
A plane elementary bipartite graph $G$ has an anti-forcing edge if and only if $G$ has a perfect matching $M$ such that
 $G$ has exactly two $M$-resonance faces whose boundaries have a common path with length at least 3.
\end{theorem}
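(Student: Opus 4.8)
The plan is to prove both directions through Lemma \ref{pass}, which says that (for $M$ the unique perfect matching of $G-e$) the edge $e\notin M$ is anti-forcing precisely when every $M$-alternating cycle passes through $e$. For necessity I would start from two elementary observations. First, in a plane graph each edge lies on exactly two faces, and the boundary of any $M$-resonance face is an $M$-alternating cycle; since every such cycle must pass through $e$, every $M$-resonance face is incident with $e$, so $G$ has at most two $M$-resonance faces. Second, if $f_1,f_2$ are $M$-resonance faces both incident with $e=uv$, then walking along the $M$-alternating cycles $\partial f_1$ and $\partial f_2$ the edge meeting $e$ at $u$ (and at $v$) must, by alternation and $e\notin M$, be an $M$-edge, hence the unique matching edges $uu'$ and $vv'$ at $u$ and $v$. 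Thus $uu',vv'\in\partial f_1\cap\partial f_2$, and the two boundaries share the path with edges $uu'$, $uv$, $vv'$, of length $3$.

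The crux, used in both directions, is a two-sided existence statement for resonance faces, which I would formulate on the sphere. Viewing $G$ on the sphere turns the exterior face into an ordinary face, and an $M$-alternating cycle $C$ then bounds two closed disks $D_1,D_2$ with $D_1\cap D_2=C$; the part of $G$ inside each $D_i$ is a plane bipartite graph with outer boundary $C$ on which $M$ restricts to a perfect matching. The lemma I need is that the interior of each $D_i$ contains at least one $M$-resonance face of $G$. This reduces to the standard fact that an $M$-alternating cycle enclosing the fewest faces is the boundary of a single face, applied to a minimal $M$-alternating cycle lying in $D_i$. Proving this minimal-cycle fact cleanly is the main obstacle: one must rule out an interior chord (which splits the cycle into two, one of which is again $M$-alternating and smaller, by the usual parity argument) and interior vertices (handled by passing to the elementary bipartite subgraph enclosed by the cycle, in the spirit of Theorem \ref{face}). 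I expect this to be citable from the plane-elementary-bipartite literature.

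With the two-sided lemma available, necessity finishes quickly. Since $G$ is elementary and $G\neq K_2$, there is an $M$-alternating cycle $C$ through $e$; applying the lemma to $C$ produces an $M$-resonance face in each of the two disks it bounds. These faces are distinct unless $G=C$ is a single cycle, in which case the two faces bounded by $C$ are both resonance; either way $G$ has at least two $M$-resonance faces. Together with the ``at most two'' bound this gives exactly two $M$-resonance faces, and by the second observation their boundaries share a common path of length at least $3$.

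For sufficiency, suppose $f_1,f_2$ are the only two $M$-resonance faces and $\partial f_1\cap\partial f_2$ contains a path $P$ of length at least $3$. The edges of $P$ alternate, so $P$ carries an edge $e\notin M$, and $e\in\partial f_1\cap\partial f_2$ means both resonance faces contain $e$. I would then show every $M$-alternating cycle passes through $e$, so that $e$ is anti-forcing by Lemma \ref{pass}. If some $M$-alternating cycle $C$ avoided $e$, then on the sphere $C$ would bound two closed disks, and since $e\notin C$ the edge $e$ would lie strictly inside one of them, so $e\notin\overline{D_2}$ for the other disk $D_2$. By the two-sided lemma $D_2$ contains an $M$-resonance face $f$, which must be $f_1$ or $f_2$ and hence contains $e$; but $\partial f\subseteq\overline{D_2}$, contradicting $e\notin\overline{D_2}$. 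Therefore no such $C$ exists, and $e$ is an anti-forcing edge, completing the characterization.
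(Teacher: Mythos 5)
Your proof is correct, but it follows a genuinely different route from the paper's. The paper reduces both directions to the characterization of forcing edges (Theorem \ref{forcingedge}): for necessity it uses Lemma \ref{onedegree} and 2-connectivity to show that the ends of an anti-forcing edge $e=uv$ have degree $2$, so that the matching edges at $u$ and $v$ are forcing edges, and then imports the ``exactly two intersecting $M$-resonance faces'' conclusion wholesale; for sufficiency it observes that an internal vertex of the common path has degree $2$, so every perfect matching of $G-e$ contains the forcing edge $f$ adjacent to $e$, whence $M$ is the unique perfect matching of $G-e$. You instead argue directly from Lemma \ref{pass}: ``at most two'' resonance faces because every resonance boundary is an $M$-alternating cycle and hence passes through $e$, which lies on only two faces; ``at least two,'' and the fact that no $M$-alternating cycle can avoid $e$ in the converse direction, both come from your two-sided resonant-face statement on the sphere. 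The one ingredient you flag as the main obstacle --- that each of the two disks bounded by an $M$-alternating cycle contains an $M$-resonant face --- is not actually a gap: it is exactly Lemma \ref{resonantface} (already quoted in Section 4 from \cite{Zhang}) applied twice, once to the given embedding and once after re-embedding $G$ with the outer face chosen inside the other disk, which is consistent with the convention of Theorem \ref{forcingedge} that the exterior face may count as a resonance face. What your approach buys is independence from Theorem \ref{forcingedge} and a cleaner derivation of the length-$3$ common path (the $M$-edges at $u$ and $v$ are forced onto both boundaries by alternation alone, with no degree argument); what it costs is an explicit reliance on the spherical topology of resonant faces, which the paper only needs in the form already packaged into the forcing-edge theorem.
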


\begin{proof}
 Let $e=uv$ be an anti-forcing edge of $G$. Then $G-e$ has a unique perfect matching $M$  (we may say $M$ is anti-forced by $e$).
By Lemma \ref{onedegree}, $G-e$ has at least two vertices of degree 1.
Since $G$ is 2-connected,  only $u$ and $v$ are of degree 1 in $G-e$.
Let  $f$ (resp. $g$) be the edge which is incident to $u$ (resp. $v$) in $G-e$.
Then $f\in M$ (resp. $g\in M$) is a forcing edge of $G$.
By Theorem \ref{forcingedge},
$G$ has exactly two $M$-resonant faces  and their boundaries $s_1$ and $s_2$ are intersecting.
By Lemma  \ref{pass}, $s_1$ and $s_2$ both pass through $e$.
Since $u$ and $v$ both are  of degree 2 in $G$,  three edges $f,e$ and $g$ form a path with length at least 3 lying on both $s_1$ and $s_2$.

Conversely, suppose  for a perfect matching $M$ of $G$ there are exactly two $M$-resonance faces  of $G$ whose boundaries  have a common path $P$ with length at least 3
 Then $P$ has a pair of adjacent edges $e$ and $f$ such that $e\notin M$ and $f\in M$.
  By Theorem \ref{forcingedge}, $f$ is a forcing edge of $G$. Note that a perfect matching $M$ of $G-e$ contains $f$. Hence $M$ is a unique perfect matching of $G-e$, and thus
  $e$ is an anti-forcing edge of $G$.
\end{proof}


From Theorem \ref{antiforcingedge} a plane elementary bipartite graph with an anti-forcing edge can be given an ear construction. For example, see Fig. \ref{ear}. According to Theorems \ref{forcingedge} and \ref{antiforcingedge},
we have the following corollary.

\begin{corollary}\label{fanti}{\bf .}
Let $G$ be a plane elementary bipartite graph with an anti-forcing edge.
Then  $G$ must have a forcing edge.
\end{corollary}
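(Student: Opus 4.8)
The plan is to derive this directly from the two characterizations already established, Theorem~\ref{forcingedge} for forcing edges and Theorem~\ref{antiforcingedge} for anti-forcing edges. Since $G$ has an anti-forcing edge, Theorem~\ref{antiforcingedge} supplies a perfect matching $M$ of $G$ for which $G$ has exactly two $M$-resonance faces whose boundaries $s_1$ and $s_2$ share a common path $P$ of length at least $3$. The goal is to exhibit a forcing edge of $G$, and the natural candidate is an $M$-edge lying on this common path.

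First I would observe that $P$ lies on the boundary of an $M$-resonance face, hence on an $M$-alternating cycle, so the edges of $P$ appear alternately in $M$ and $E(G)\setminus M$. Because $P$ has length at least $3$, three or more consecutive edges of an alternating path must contain at least one edge $f\in M$. This edge $f$ belongs to both $s_1$ and $s_2$, that is, it is a common edge in $M$ on the two $M$-resonance faces. Consequently the two boundaries intersect, so the hypothesis of Theorem~\ref{forcingedge} is met: $G$ has a perfect matching $M$ with exactly two $M$-resonance faces whose boundaries are intersecting. Theorem~\ref{forcingedge} then guarantees that each common $M$-edge on the two resonance faces, in particular $f$, is a forcing edge of $G$, which completes the argument.

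The only point requiring care, and hence the main (minor) obstacle, is confirming that a common path of length at least $3$ forces the existence of a common $M$-edge. This is immediate from the alternation along $M$-alternating cycles, but it is precisely the link that bridges the ``length at least $3$'' condition of Theorem~\ref{antiforcingedge} to the ``common edge in $M$'' conclusion of Theorem~\ref{forcingedge}; everything else is a direct appeal to the two theorems.
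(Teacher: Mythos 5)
Your argument is correct and is exactly the route the paper intends: the paper derives Corollary~\ref{fanti} directly from Theorems~\ref{forcingedge} and~\ref{antiforcingedge}, and your observation that an $M$-alternating common path of length at least $3$ must contain an edge of $M$ (so the two resonance-face boundaries intersect in a common $M$-edge, which Theorem~\ref{forcingedge} then certifies as a forcing edge) is precisely the bridging step. Nothing further is needed.
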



\begin{figure}[http]
  \centering
    \includegraphics[width=40mm]{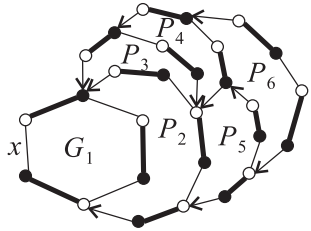}
       \caption{An ear  construction for a plane bipartite graph with an anti-forcing edge $x$.}
  \label{ear}
\end{figure}
%
%
%

\section{Maximum anti-forcing number}

\begin{theorem}\label{maxantiforcing}{\bf .}
Let $G$ be a connected graph with a perfect matching.
Then $Af(G)\leq r(G)$.
Further, if $G$ is nonbipartite,
then  $Af(G)<r(G)$.
\end{theorem}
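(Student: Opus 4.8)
The plan is to exploit Theorem \ref{antiforcingset}, which reduces $af(G,M)$ to the minimum number of non-$M$ edges meeting every $M$-alternating cycle, together with the basic observation that an $M$-alternating cycle is necessarily even. For the upper bound I would begin with an arbitrary perfect matching $M$ and note that $M$, being a matching, is a forest; since $G$ is connected, I can extend $M$ to a spanning tree $T$ with $M\subseteq T$. The complementary set $S:=E(G)\setminus T$ then consists of exactly $|E(G)|-(|V(G)|-1)=r(G)$ edges, all lying outside $M$. Because $G-S=T$ is a tree and a tree has at most one perfect matching, $M$ is the unique perfect matching of $G-S$; hence $S$ is an anti-forcing set of $M$ and $af(G,M)\le r(G)$. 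Taking the maximum over all $M$ yields $Af(G)\le r(G)$.

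For the strict inequality in the nonbipartite case, the idea is to show that one of the $r(G)$ co-tree edges is redundant. Keeping the spanning tree $T\supseteq M$ from above, each co-tree edge $e$ determines a unique fundamental cycle $C_e\subseteq T+e$. I would invoke the standard fact that $G$ is bipartite if and only if every fundamental cycle with respect to $T$ is even; since $G$ is nonbipartite, some co-tree edge $e^{\ast}$ has an odd fundamental cycle $C_{e^{\ast}}$. The key point is that deleting $e^{\ast}$ from $S$ still leaves an anti-forcing set: any cycle avoiding all edges of $S\setminus\{e^{\ast}\}$ must lie inside $T+e^{\ast}$ and hence equals $C_{e^{\ast}}$, the unique cycle of $T+e^{\ast}$; but $C_{e^{\ast}}$ is odd and so cannot be $M$-alternating. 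Thus $S\setminus\{e^{\ast}\}$ still meets every $M$-alternating cycle, giving $af(G,M)\le r(G)-1$ for \emph{every} $M$, and therefore $Af(G)<r(G)$.

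The main obstacle I anticipate is the careful justification of this redundancy step: one must verify that the only $M$-alternating cycle that could be ``left behind'' by removing $e^{\ast}$ is the fundamental cycle $C_{e^{\ast}}$ itself. This rests on two facts I would state cleanly---that every cycle of $G$ contains at least one co-tree edge (since $T$ is acyclic), and that $T+e^{\ast}$ contains exactly one cycle---after which the parity obstruction (alternating cycles are even while $C_{e^{\ast}}$ is odd) closes the argument. A minor point worth checking is that the chosen tree $T$ may be taken to contain $M$ while still possessing an odd fundamental cycle; this is immediate, since the dichotomy between bipartiteness and the existence of an odd fundamental cycle holds for \emph{every} spanning tree, in particular for $T\supseteq M$.
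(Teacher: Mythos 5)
Your proof is correct and follows essentially the same route as the paper's: build a spanning tree $T$ containing $M$ (the paper does this by iteratively deleting non-$M$ cycle edges, you by extending the forest $M$), take the $r(G)$ co-tree edges as an anti-forcing set, and in the nonbipartite case discard the co-tree edge whose fundamental cycle is odd, since an odd cycle cannot be $M$-alternating. The only cosmetic difference is that the paper locates the odd fundamental cycle via the bipartition of $T$, whereas you invoke the equivalent standard fact directly.
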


\begin{proof}
Let $M$ be any perfect matching of $G$.
If $G$ has a cycle $C_1$,
then there is an edge $e_1\in E(C_1)\setminus M$
such that $G-e_1$ is connected and $M$ is a perfect matching of $G-e_1$.
If $G-e_1$ also contains a cycle $C_2$,
then we can delete an edge $e_2\in E(C_2)\setminus M$ from $G-e_1$
such that $G-e_1-e_2$ is connected and $M$ is a perfect matching of $G-e_1-e_2$.
Repeating this procedure,
finally we can obtain a spanning tree $T$ of $G$ such that
$M$ is a unique perfect matching of $T$.
So $E(G)\setminus E(T)$ is an anti-forcing set of $M$ in $G$,
and $af(G,M)\leq |E(G)\setminus E(T)|=r(G)$.
It implies that $Af(G)\leq r(G)$.

Since  tree $T$ is a bipartite graph, its vertex set can be partitioned into
two partite sets $X$ and $Y$.
If $G$ is nonbipartite,
then there must be an edge $e$ of $E(G)\setminus E(T)$
such that both ends of $e$  are in $X$ or $Y$.
Therefore, $T+e$ contains a unique cycle, which is odd.
So $T+e$ has no $M$-alternating cycles. That is, $M$ is still a unique
perfect matching of $T+e$.
Hence $M$ is anti-forced by $E(G)\setminus E(T+e)$,
and  $af(G,M)\leq |E(G)\setminus E(T+e)|< r(G)$, which implies that $Af(G)<r(G)$.
\end{proof}

In the following we characterize graphs $G$ with $Af(G)=r(G)$.
By Theorem \ref{maxantiforcing},
we only need to consider bipartite graphs. Let $G$ be a  bipartite graph with a perfect matching.
An edge of $G$ is said
to be \emph{fixed single} (resp. \emph{double}) if it belongs to no (resp. all) perfect matchings of $G$.
An edge of $G$ is \emph{fixed}
if it is either fixed double edge or fixed single edge.
The non-fixed edges of $G$ form a subgraph of $G$, each component of which is elementary and  called \emph{elementary (or normal) component} of $G$. The proof of Lemma 4.5 in \cite{Zh2} also implies the below.

\begin{lemma}\label{sum}{\bf .} Let $G$ be a  bipartite graph with a perfect matching $M$. Let $G_1,G_2,\ldots,G_k$ be the normal components of $G$, $k\geq 1$.
Then
$af(G,M)=\sum^{k}_{i=1}af(G_i,M_i)$, where $M_i$ is the restriction of $M$ on $G_i$.
\end{lemma}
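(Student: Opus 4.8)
The plan is to reduce the whole statement to Theorem \ref{antiforcingset}, which says that $S\subseteq E(G)\setminus M$ anti-forces $M$ exactly when $S$ meets every $M$-alternating cycle of $G$. First I would record the (standard) fact that each $M_i=M\cap E(G_i)$ is a perfect matching of $G_i$: if a vertex $v$ meets a non-fixed edge, its $M$-edge cannot be fixed double (else a perfect matching using the non-fixed edge at $v$ would omit a fixed double edge) nor fixed single (it lies in $M$), so it is non-fixed and keeps $v$ inside its component. The central structural step I would then prove is that every $M$-alternating cycle $C$ of $G$ avoids all fixed edges and hence lives inside a single normal component $G_i$. Indeed, if $C$ ran through a fixed double edge $e$, then $M\triangle C$ would be a perfect matching missing $e$; and if $C$ ran through a fixed single edge $e\notin M$, then $M\triangle C$ would be a perfect matching containing $e$. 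Both contradict the definition of a fixed edge, so $C$ uses only non-fixed edges, and being connected it lies in one $G_i$, where it is an $M_i$-alternating cycle. Conversely each $M_i$-alternating cycle of $G_i$ is plainly an $M$-alternating cycle of $G$. This identifies the $M$-alternating cycles of $G$ with the disjoint union, over $i$, of the $M_i$-alternating cycles of the $G_i$, and cycles from distinct components are vertex-disjoint.

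Granting this, I would split the equality into two inequalities. For the bound $\le$, I would take a minimum anti-forcing set $S_i$ of $M_i$ in each $G_i$ and set $S=\bigcup_i S_i\subseteq E(G)\setminus M$; by the identification above $S$ meets every $M$-alternating cycle of $G$, so Theorem \ref{antiforcingset} makes it an anti-forcing set, giving $af(G,M)\le\sum_i af(G_i,M_i)$. For the bound $\ge$, I would take a minimum anti-forcing set $S$ of $M$ in $G$ and restrict it as $S_i=S\cap(E(G_i)\setminus M_i)$. Since any $M_i$-alternating cycle $C$ sits inside $G_i$, every edge of $S$ meeting $C\setminus M$ already lies in $E(G_i)$, so $S_i$ meets every $M_i$-alternating cycle and anti-forces $M_i$; as the $G_i$ are edge-disjoint the $S_i$ are disjoint, whence $af(G,M)=|S|\ge\sum_i|S_i|\ge\sum_i af(G_i,M_i)$. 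Combining the two bounds gives the claim.

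The only real obstacle is the structural step in the first paragraph---showing that $M$-alternating cycles never touch fixed edges and therefore cannot span two normal components; after that the argument is pure bookkeeping on top of Theorem \ref{antiforcingset} and the vertex-disjointness of the $G_i$. I would also note the harmless point that any anti-forcing set may be assumed to use no fixed single edges, since such edges lie on no $M$-alternating cycle and so are wasted; this is exactly what the restriction $S_i=S\cap(E(G_i)\setminus M_i)$ exploits. Since this reasoning coincides with the proof of Lemma 4.5 in \cite{Zh2}, no machinery beyond the fixed/non-fixed decomposition and the alternating-cycle characterization is needed.
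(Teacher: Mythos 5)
Your proof is correct and follows essentially the same route the paper relies on: the paper gives no independent argument but defers to the proof of Lemma 4.5 in \cite{Zh2}, which is exactly the reduction you carry out (fixed edges lie on no $M$-alternating cycle, so alternating cycles localize to single normal components, and Theorem \ref{antiforcingset} then splits the minimum anti-forcing set across components). No gaps; the two inequalities and the disjointness bookkeeping are all handled properly.
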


\begin{lemma}\label{general}{\bf .}
Let $G$ be a  bipartite graph with a perfect matching. Then $Af(G)=r(G)$ if and only if  each fixed edge of $G$
is a cut-edge and for each normal component of $G$ the maximum anti-forcing numbers is equal to its cyclomatic number.
\end{lemma}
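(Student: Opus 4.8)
The plan is to reduce everything to the normal components $G_1,\dots,G_k$ of $G$ by establishing two additive decompositions, one for the maximum anti-forcing number and one for the cyclomatic number, and then comparing them term by term. Concretely, I would prove that
\[
Af(G)=\sum_{i=1}^{k}Af(G_i)\qquad\text{and}\qquad r(G)=\sum_{i=1}^{k}r(G_i)+r(G'),
\]
where $G'$ is the graph obtained from $G$ by contracting each $G_i$ to a single vertex, so that the edge set of $G'$ is exactly the set of fixed edges of $G$. Since every $G_i$ is elementary and therefore connected, Theorem \ref{maxantiforcing} gives $Af(G_i)\le r(G_i)$, and combining the two displays yields $Af(G)=\sum_i Af(G_i)\le \sum_i r(G_i)\le r(G)$. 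The equality $Af(G)=r(G)$ then forces $Af(G_i)=r(G_i)$ for every $i$ together with $r(G')=0$, and conversely; it remains only to identify $r(G')=0$ with the cut-edge condition.

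For the first display I would invoke Lemma \ref{sum}, which already gives $af(G,M)=\sum_i af(G_i,M_i)$ for each perfect matching $M$, where $M_i=M\cap E(G_i)$. The remaining point is that the restrictions $M_i$ can be prescribed independently: a fixed double edge lies in every perfect matching, so any vertex incident to one is matched by it in all matchings and hence carries only fixed edges; consequently no vertex of a normal component meets a fixed double edge, each $M_i$ is genuinely a perfect matching of $G_i$, and every choice of perfect matchings $(M_1,\dots,M_k)$ extends, by adjoining the forced fixed double edges, to a perfect matching of $G$. Taking the maximum over $M$ therefore commutes with the sum, giving $Af(G)=\sum_i\max_{M_i}af(G_i,M_i)=\sum_i Af(G_i)$.

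For the second display I would use the standard behaviour of the cyclomatic number under contraction of connected subgraphs: contracting the connected subgraphs $G_1,\dots,G_k$ splits off exactly $\sum_i r(G_i)$ independent cycles, leaving $r(G')$ as the cyclomatic number of the contracted graph. A direct count with $r(H)=|E(H)|-|V(H)|+c(H)$, using $|E(G)|=\sum_i|E(G_i)|+|F|$ and $|V(G)|=\sum_i|V(G_i)|+|W|$, where $F$ is the set of fixed edges and $W$ the set of vertices incident only to fixed edges, confirms the identity. Since the edges of $G'$ are precisely the fixed edges, and contracting connected subgraphs that avoid a given edge preserves whether that edge lies on a cycle, a fixed edge is a cut-edge of $G$ if and only if its image is a bridge of $G'$. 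Hence $r(G')=0$, i.e.\ $G'$ is a forest, if and only if every fixed edge of $G$ is a cut-edge, which is exactly the condition in the statement.

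I expect the main obstacle to be the cyclomatic bookkeeping and the bridge equivalence in the second step: one must treat fixed edges joining two different normal components, fixed edges with both ends in the same component (which become loops in $G'$ and correctly fail to be cut-edges), and the vertices of $W$ uniformly, and verify that $r(G')=0$ is equivalent to the purely local cut-edge condition. Once both additive decompositions are in place, the term-by-term comparison and the extraction of the equality conditions are routine.
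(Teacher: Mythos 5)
Your proposal is correct and follows essentially the same route as the paper: Lemma \ref{sum} yields $Af(G)=\sum_{i}Af(G_i)$, Theorem \ref{maxantiforcing} yields $Af(G_i)\le r(G_i)$, and the chain $Af(G)=\sum_i Af(G_i)\le\sum_i r(G_i)\le r(G)$ turns equality into exactly the two stated conditions, with $\sum_i r(G_i)=r(G)$ identified with the cut-edge condition. The only difference is that you spell out two points the paper leaves implicit---that the maxima over the $M_i$ can be attained simultaneously (via the fixed double edges covering the remaining vertices), and the contraction/cyclomatic bookkeeping behind the cut-edge equivalence---both of which you handle correctly.
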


\begin{proof}If $G$ has no normal components, then $Af(G)=0$. In this case $Af(G)=r(G)$ if and only if $G$ is a forest.  Adapting the notations in Lemma \ref{sum}, we assume that $k\geq 1$. Lemma \ref{sum} implies that $Af(G)=\sum^{k}_{i=1}Af(G_i)$. By Theorem \ref{maxantiforcing}, we have

$$Af(G)=
\sum^{k}_{i=1}Af(G_i)\leq \sum^{k}_{i=1}r(G_i)\le  r(G).$$

\noindent So $Af(G)=r(G)$ if and only if $Af(G_i)=r(G_i)$ for each $1\le i\le k$, and $\sum^{k}_{i=1}r(G_i)=  r(G)$. The latter holds if and only if each fixed edge of $G$ is a cut edge.
\end{proof}


From Lemma \ref{general}  we need to consider
elementary  bipartite graphs.

\begin{lemma}\label{cha}{\bf .}
Let $G$ be an elementary  bipartite graph other than $K_2$.
Then $Af(G)=r(G)$ if and only if $G$ has
a perfect matching $M$ and
a bipartite ear decomposition $(G_{0},G_{1},\ldots,G_{r}(=G))$, $r:=r(G)\geq 1$,
such that  $M_i=M\cap E(G_i)$
is a perfect matching of $G_i$, $0\leq i\leq r$,
and $u_iv_i\in M_{i-1}$ for the  ends $u_i$ and $v_i$
of the $i$-th ear $P_i$, $1\le i\le r$.
\end{lemma}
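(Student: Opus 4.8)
The plan is to treat the two implications separately, proving sufficiency by exhibiting an explicit compatible alternating set (which needs no planarity) and necessity by an induction that peels off one ear at a time.

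For sufficiency, suppose we are given $M$ together with the ear decomposition satisfying $u_iv_i\in M_{i-1}$, and set $C_i:=P_i+u_iv_i$ for each $i$. Since $P_i$ has odd length, its internal vertices are matched by alternate edges of $P_i$, so its two end edges are non-matching; together with $u_iv_i\in M$ this makes the even cycle $C_i$ an $M$-alternating cycle. I would then verify that $\{C_1,\dots,C_r\}$ is a compatible $M$-alternating set: for $i<j$ every edge of $P_j$ is new to $G_{j-1}\supseteq C_i$, whence $E(C_i)\cap E(C_j)\subseteq\{u_jv_j\}\subseteq M$. As the $C_i$ are pairwise distinct (each carries the fresh edges of $P_i$), this gives $c'(M)\ge r$, so by the general bound $af(G,M)\ge c'(M)\ge r$; combined with Theorem \ref{maxantiforcing}, namely $Af(G)\le r(G)=r$, we conclude $Af(G)=r$.

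For necessity I would induct on $r=r(G)$, the base $r=1$ being an even cycle (take $G_0$ a matching edge and $P_1$ the rest). For $r\ge 2$, choose $M$ with $af(G,M)=r$ and run the edge-deletion procedure from the proof of Theorem \ref{maxantiforcing} to obtain a spanning tree $T\supseteq M$ whose unique perfect matching is $M$; the $r$ co-tree edges $S=E(G)\setminus E(T)$ are then non-matching, and since $|S|=r=af(G,M)$ the set $S$ is a minimum anti-forcing set. Minimality forces $T+e$ to admit an $M$-alternating cycle for each $e\in S$, and this can only be the fundamental cycle $C_e$; hence every co-tree fundamental cycle is $M$-alternating. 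The goal is now to locate a last ear: a co-tree edge $e_r$ and a matching edge $u_rv_r$ on $C_{e_r}$ so that $P_r:=C_{e_r}-u_rv_r$ is a path all of whose internal vertices have degree $2$ in $G$, and to let $G_{r-1}$ be $G$ with the interior of $P_r$ deleted.

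Granting such an ear, the remaining steps are routine. The graph $G_{r-1}$ is bipartite, elementary (it is $G$ with one ear removed, Theorem \ref{eardecomposition}), of cyclomatic number $r-1$, and $M_{r-1}:=M\cap E(G_{r-1})$ is a perfect matching of it, because $u_rv_r\in M$ keeps its ends matched inside $G_{r-1}$ while the degree-$2$ interior of $P_r$ is matched inside $P_r$. To inherit extremality I would show $af(G_{r-1},M_{r-1})\ge r-1$: any $M$-alternating cycle of $G$ meeting the interior of $P_r$ must traverse all of $P_r$ and hence contains $e_r\notin M$, so if $S'$ anti-forces $M_{r-1}$ in $G_{r-1}$ then $S'\cup\{e_r\}$ anti-forces $M$ in $G$, giving $r=af(G,M)\le af(G_{r-1},M_{r-1})+1$; with Theorem \ref{maxantiforcing} this yields $Af(G_{r-1})=r-1$, and the induction hypothesis then provides a decomposition of $G_{r-1}$ to which appending $P_r$ (anchored at $u_rv_r\in M_{r-1}$) completes the construction. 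The main obstacle is precisely the existence of this removable matching-anchored ear: an arbitrary last ear of a generic bipartite ear decomposition need not be anchored at an edge of $M$, since a parity check shows its two ends are either both matched outside the ear or both matched into it. I therefore expect to produce $e_r$ from an extremal choice — an innermost $M$-alternating cycle, or a co-tree edge whose fundamental cycle carries no other co-tree edge in its interior — and to use the fact that all fundamental cycles are $M$-alternating, together with $2$-connectedness, to verify that stripping its interior leaves every internal vertex of degree $2$ while keeping $G_{r-1}$ elementary. Making this extremal selection work without invoking planarity is the delicate heart of the argument.
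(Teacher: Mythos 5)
Your sufficiency argument and the setup of necessity (spanning tree $T$ with $M$ as its unique perfect matching, the $r$ fundamental cycles, and the observation that minimality of $S=E(G)\setminus E(T)$ forces each fundamental cycle to be $M$-alternating) match the paper exactly. But the step you yourself flag as ``the delicate heart of the argument'' --- producing a fundamental cycle $C_r$ anchored at a single matching edge $u_rv_r$ whose other vertices all have degree $2$ in $G$ --- is left as a hope rather than a proof, and your proposed route to it (an ``innermost'' $M$-alternating cycle, a fundamental cycle with no other co-tree edge ``in its interior'') tacitly assumes a planar embedding. That assumption is not available: the lemma is stated for general elementary bipartite graphs, and planarity is only derived as a \emph{consequence} of the ear decomposition afterwards. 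So the proposal has a genuine gap at exactly the point where the real work lies.

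The paper closes this gap with four purely combinatorial claims about $\mathcal{A}=\{C_1,\dots,C_r\}$, all driven by minimality of $S$ and cyclomatic-number counting rather than by any embedding. First, the $C_i$ are pairwise \emph{compatible}: if $C_i$ and $C_j$ shared a non-matching edge $e$, then $T+e_i+e_j-e$ would contain a unique cycle that is not $M$-alternating, so $(S\setminus\{e_i,e_j\})\cup\{e\}$ would be an anti-forcing set of size $r-1$. Second, $G=\bigcup_i C_i$: a non-matching edge $e$ lying on no $C_i$ could be deleted while $\mathcal{A}$ survives in $G-e$, contradicting $|\mathcal{A}|\le r(G-e)=r-1$. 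Third, two cycles of $\mathcal{A}$ share at most one $M$-edge, since otherwise $C_i\cup C_j\subseteq T+e_i+e_j$ would have cyclomatic number at least $3$. Fourth, some cycle of $\mathcal{A}$ has exactly \emph{one} shared $M$-edge: if every cycle had at least two, one could extract a cyclic sequence $C_{i_1},\dots,C_{i_s}$ of cycles consecutively glued along single $M$-edges, forcing $r(\bigcup_j C_{i_j})=s+1$ while $r(T+e_{i_1}+\cdots+e_{i_s})=s$. That fourth claim hands you the removable ear (its interior vertices have degree $2$ precisely because $G=\bigcup_i C_i$ and $C_r$ meets the rest only in $u_rv_r$), and the induction then proceeds as you describe. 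You should replace your extremal-selection sketch with these counting arguments; without them, or something equivalent that likewise avoids planarity, the necessity direction is not established.
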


\begin{proof}
\emph{Sufficiency}:
Let $C_i=P_i+u_iv_i$ ($1\leq i\leq r$).
Then $\{C_1,C_2,\ldots,C_r\}$ is a compatible
$M$-alternating set of $G$.
By Theorem  \ref{antiforcingset},
$af(G,M)\geq r$.
By Theorem \ref{maxantiforcing},
$af(G,M)\leq r$.
So $af(G,M)=r$.
It implies that $Af(G)=r$.

\emph{Necessity}:
Let $M$ be a perfect matching of $G$ with $af(G,M)=r$.
From the proof of Theorem \ref{maxantiforcing},
$G$ has a spanning tree $T$  such that $M$ is
a unique perfect matching of $T$.
Let $S=\{e_1,e_2,\ldots,e_r\}:=E(G)\setminus E(T)$,
and $C_i$ the unique cycle in $T+e_i$ for $i=1,2,\ldots,r$. Let $\mathcal{A}=\{C_1,C_2,\ldots,C_r\}$. Then $\mathcal{A}$ is a system of fundamental cycles of $G$ relative to $T$, which is a base of the cycle space of $G$.
\vskip 4mm

\noindent\textbf{Claim 1.} $\mathcal{A}$ is a compatible $M$-alternating set.

\begin{proof}We have that each $C_i$ is $M$-alternating.
Otherwise, there is a cycle $C_j$ ($1\leq j \leq r$) which is not $M$-alternating. So $M$ is the unique perfect matching of
$T+e_j$, and $S\setminus\{e_j\}$ is an anti-forcing set of $M$.
It implies that $af(G,M)\leq |S\setminus\{e_j\}|=r-1$, a contradiction.

To the contrary, suppose $C_i,C_j\in \mathcal{A}$ are incompatible.
Then there exists an edge $e\in (E(C_i)\cap E(C_j))\setminus M$.
It follows that $T+e_i+e_j-e$ contains exactly  one cycle
and this cycle is not $M$-alternating. So  $M$  is a unique perfect
matching of  graph $T+e_i+e_j-e$.
Hence $(S\setminus\{e_i,e_j\})\cup\{e\}$ can be an anti-forcing
set of $M$ in $G$, and $af(G,M)\leq |(S\setminus\{e_i,e_j\})\cup\{e\}|=r-1$, a contradiction.
 \end{proof}

\noindent\textbf{Claim 2.}  $G=\bigcup^{r}_{i=1}C_i$.

\begin{proof}For any edge $e\in E(G)\setminus M$,
$G-e$ is connected since $G$ is 2-connected.
Suppose $e$ does not appear in any cycle of $\mathcal{A}$.
Then $\mathcal{A}$ also is a compatible $M$-alternating set
of $G-e$.
By Theorems \ref{antiforcingset} and \ref{maxantiforcing},
we have $r=|\mathcal{A}|\leq Af(G-e)\leq r(G-e)=r-1$, a contradiction.
For an edge $f\in M$,
any adjacent edge  of $f$ is not in $M$  and appears in an $M$-alternating cycle of $\mathcal{A}$,
so  $f$ appears in such a cycle.   \end{proof}

\noindent\textbf{Claim 3.} Any two cycles of $\mathcal{A}$ have at most one common edge in $M$.

\begin{proof}If  two cycles $C_i$ and $C_j$ in $\mathcal{A}$  have more than one common edges in $M$,
then such common edges belong to $M$ and are thus disjoint. So $C_i\cup C_j\subseteq T+e_i+e_j$ and has the cyclomatic number  at least three,
contradicting $r(T+e_i+e_j)=2$.\end{proof}

An edge of $M$ is called a \emph{shared edge} if
it belongs to at least two cycles of $\mathcal{A}$.
Since $G$ is connected,
each cycle of  $\mathcal{A}$ has at least one shared edge.
\vskip 4mm

\noindent\textbf{Claim 4.} $\mathcal{A}$ has a cycle that has exactly one shared edge.

\begin{proof}Suppose to the contrary that each cycle of $\mathcal{A}$ has at least
two common edges. Then $\mathcal{A}$ has a cyclic sequence of cycles $C_{i_1},C_{i_2},\ldots,C_{i_s}$, $s\geq 3$, such that just all pairs of consecutive $C_{i_j}$ and $C_{i_{j+1}}$ have one common edge for  $1\le j\le s$. This implies $r(\cup_{j=1}^{s}C_{i_j})=s+1$,  contradicting that $r(T+e_{i_1}+e_{i_2}+\cdots+e_{i_s})=s$.
\end{proof}

We now prove the necessity by induction on $r$.
For the case $r=1$, $G$ is an even cycle, so the result is trivial.
Suppose now that $r\geq 2$.

By Claim 4 let $C_r$ be a cycle of $\mathcal{A}$  that has exactly one shared edge $u_{r}v_{r}$. Then the vertices of $C_r$ except $u_r$ and $v_r$ are of degree 2.
Let $P_{r}:=C_{r}-u_{r}v_{r}$, and $G_{r-1}$ the graph obtained from $G$ by deleting the inner vertices of
the path $P_r$.
Then $M_{r-1}=M\cap E(G_{r-1})$ is a perfect matching of $G_{r-1}$,
and $u_{r}v_{r}\in M_{r-1}$.
Note that $G_{r-1}=\bigcup^{r-1}_{i=1}C_i$
is an elementary bipartite graph with $r(G_{r-1})=r-1\geq 1$,
and $\mathcal{A}\setminus\{C_r\}$
is a compatible $M_{r-1}$-alternating set of $G_{r-1}$.
Therefore  $Af(G_{r-1})=r-1$.
By the induction hypothesis,
$G_{r-1}$ has an ear decomposition
$(G_{0},G_{1},\ldots,G_{r-1})$
such that  $M_i=M_{r-1}\cap E(G_i)$
is a perfect matching of $G_i$ ($1\leq i\leq r-1$),
and the two end vertices $u_i$ and $v_i$
of the $i$-th ear $P_i$ are adjacent in $G_{i-1}$,
and $u_iv_i\in M_{i-1}$.
Adding $P_r$ to $G_{r-1}$ we obtain
the required ear decomposition
$(G_{0},G_{1},\ldots,G_{r-1}, G_{r})$ of $G$.
\end{proof}

From Lemma \ref{cha} we have that an elementary bipartite graph $G$ with $Af(G)=r(G)$ must be planar. This follows easily from the specific ear decomposition. So from the above lemmas we have the following immediate consequences.

\begin{theorem}\label{main}{\bf .}
Let $G$ be a graph $G$ with a perfect matching. Then
$Af(G)=r(G)$ if and only if $G$ is a planar bipartite graph and each block is either a fixed edge or a normal component with an ear decomposition as described in Lemma \ref{cha}.
\end{theorem}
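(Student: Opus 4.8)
The plan is to deduce the statement by combining Theorem \ref{maxantiforcing}, Lemma \ref{general} and Lemma \ref{cha}, after translating the hypotheses of Lemma \ref{general} into the language of blocks. First I would dispose of the nonbipartite case: by Theorem \ref{maxantiforcing}, if $G$ is nonbipartite then $Af(G)<r(G)$, so the equality $Af(G)=r(G)$ already forces $G$ to be bipartite. Hence in both directions we may assume $G$ is a bipartite graph with a perfect matching, and Lemma \ref{general} applies: $Af(G)=r(G)$ if and only if every fixed edge of $G$ is a cut-edge and every normal component $G_i$ satisfies $Af(G_i)=r(G_i)$. Since each $G_i$ is elementary, Lemma \ref{cha} rephrases the latter equality as $G_i$ admitting the ear decomposition described there, and in particular forces $G_i$ to be planar.

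The crux is a short block-theoretic dictionary, which I would set up as two observations. First, every cut-edge of a graph with a perfect matching is fixed: deleting a bridge $uv$ splits $G$ into the side $G_u \ni u$ and the side $G_v \ni v$, and whether or not a given perfect matching uses $uv$ determines the parity of $|V(G_u)|$, so $uv$ lies either in every perfect matching or in none. Second, every normal component is $2$-connected: a non-fixed edge $e$ lies in $M\triangle M'$ for perfect matchings $M,M'$ distinguished at $e$, hence on an $M$-alternating cycle all of whose edges are non-fixed; thus each normal component contains a cycle and, being elementary and different from $K_2$, is $2$-connected.

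Granting these, suppose every fixed edge is a cut-edge. A single-edge block is a bridge, hence fixed; a $2$-connected block $B$ contains a cycle, so none of its edges is a bridge, whence by hypothesis none is fixed, so $B$ lies inside one normal component $N$. Since $N$ is itself $2$-connected it lies inside a single block, and comparing edges gives $B=N$. Therefore the blocks of $G$ are precisely the fixed edges (as single-edge blocks) together with the normal components, which is exactly the block description in the statement. This gives necessity at once: $Af(G)=r(G)$ yields $G$ bipartite with each fixed edge a bridge and each normal component carrying the ear decomposition of Lemma \ref{cha}, the dictionary restates this as ``each block is a fixed edge or such a normal component'', and planarity of every block makes $G$ planar. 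For sufficiency I would run the argument backwards: if each block is a fixed edge or a normal component with the stated ear decomposition, then every fixed edge is a single-edge block, hence a cut-edge, and every normal component has $Af(G_i)=r(G_i)$ by Lemma \ref{cha}, so Lemma \ref{general} gives $Af(G)=r(G)$; planarity of $G$ again follows from planarity of its blocks.

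The main obstacle is precisely the second observation together with the identification of normal components with the $2$-connected blocks: one must be certain that no normal component degenerates to a $K_2$ (so that Lemma \ref{cha} is applicable) and that the fixed edges are exactly the single-edge blocks, so that the somewhat asymmetric hypothesis of Lemma \ref{general} matches the clean block statement. Once this dictionary is in place, the remainder is a direct appeal to the preceding lemmas.
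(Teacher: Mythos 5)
Your proposal is correct and follows essentially the same route as the paper, which presents this theorem as an immediate consequence of Theorem \ref{maxantiforcing}, Lemma \ref{general} and Lemma \ref{cha}. Your block-theoretic dictionary (cut-edges are fixed; normal components contain alternating cycles, hence are non-trivial elementary and so $2$-connected; therefore the blocks are exactly the fixed bridges and the normal components) is precisely the translation the paper leaves implicit, and your handling of planarity via planarity of the blocks matches the paper's remark following Lemma \ref{cha}.
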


\begin{corollary}\label{count}{\bf .}
Let $G$ be an elementary graph with $r(G)\geq 2$ and  $Af(G)=r(G)$.
Then $G$ has a unique perfect matching $M$ such that $af(G,M)=r(G)$.
\end{corollary}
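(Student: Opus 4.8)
The plan is to pair the existence half, which is already contained in Lemma \ref{cha}, with a rigidity argument showing that an extremal matching is forced face by face. First I would record the setup. Since $Af(G)=r(G)$ and $r(G)=r\ge 2$, Theorem \ref{maxantiforcing} forces $G$ to be bipartite, and $G$ is elementary by hypothesis, so Lemma \ref{cha} applies: there is a perfect matching $M$ together with a bipartite ear decomposition $(G_0,G_1,\dots,G_r)$ in which every $C_i:=P_i+u_iv_i$ is an $M$-alternating cycle, $u_iv_i\in M$, and $\{C_1,\dots,C_r\}$ is a compatible $M$-alternating set of size $r$ with $af(G,M)=r$. As noted after Lemma \ref{cha}, this is a reducible face decomposition, so the $C_i$ are exactly the $r$ interior faces of the induced plane embedding, each $M$-resonant, and each ear meets the earlier part in the single edge $u_iv_i\in M$; hence the $r-1$ interior (shared) edges all lie in $M$.

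For uniqueness, let $M'$ be any perfect matching with $af(G,M')=r$. The key observation is that the construction in the necessity proof of Lemma \ref{cha} applies to $M'$: it produces a reducible face decomposition of $G$ whose faces $C_1',\dots,C_r'$ are all $M'$-resonant and whose consecutive ears are glued along single edges $u_i'v_i'\in M'$. Thus every interior face is $M'$-resonant and every interior edge lies in $M'$. Granting for the moment that the two decompositions expose the same set of interior faces, I would then determine $M'$ completely: traversing any face, the shared edges on it are in $M'$ and its boundary is $M'$-alternating, so between two cyclically consecutive shared edges (or, for a face with a single shared edge, starting from that edge) the membership of every remaining peripheral edge is forced by alternation. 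Since the faces cover $E(G)$ and each peripheral edge lies on a unique face, this pins down $M'$ edge by edge; as $M$ satisfies the identical constraints, $M'=M$.

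The main obstacle is the clause ``the two decompositions expose the same interior faces'': because $G$ is only $2$-connected, distinct extremal matchings could a priori induce distinct plane embeddings, so I must show the relevant face set (equivalently, the set of shared edges) is matching-independent. I would resolve this by an induction on $r$ that peels a leaf face, reusing Claim 4 of Lemma \ref{cha}: choose a face $C_r$ meeting the rest of $G$ in a single edge $u_rv_r$, whose other vertices have degree $2$ in $G$. Because these internal vertices have degree $2$, any perfect matching restricts to $P_r$ in one of only two alternating patterns (both endpoints matched inward, or both matched outward), and a short case analysis shows that unless $u_rv_r\in M'$ every maximum compatible $M'$-alternating set has size $<r$, contradicting $af(G,M')=c'(M')=r$ via Theorems \ref{antiforcingset} and \ref{compatibleset}. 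Hence $u_rv_r\in M'$ and $M'$ agrees with $M$ on $C_r$; deleting the interior of $P_r$ leaves $M'\cap E(G_{r-1})$ extremal in $G_{r-1}$, where $r(G_{r-1})=r-1$, and the induction hypothesis gives agreement there.

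I expect this leaf-peeling case analysis to be the delicate point, since the configuration in which $u_r$ and $v_r$ are matched inward keeps $C_r$ itself $M'$-resonant and only fails globally (the restriction to $G_{r-1}$ ceases to be a perfect matching, so it cannot carry $r-1$ compatible cycles). The base case $r=2$ is checked directly, and the hypothesis $r\ge 2$ is genuinely needed: for $r=1$ the graph $G$ is an even cycle, which has two distinct perfect matchings, each of anti-forcing number $1$, so uniqueness fails.
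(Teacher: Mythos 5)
Your architecture is exactly the paper's: existence from Lemma \ref{cha}, then uniqueness by induction on $r$, peeling the last ear $P_r$ attached along $u_rv_r$, proving the key claim $u_rv_r\in M'$, and passing to $G_{r-1}$ where $M'|_{G_{r-1}}$ is again extremal. The base case remark and the observation that the hypothesis $r\ge 2$ is needed (an even cycle has two extremal matchings) are also correct. The problem is the pivotal claim itself. You propose to prove $u_rv_r\in M'$ by showing that otherwise ``every maximum compatible $M'$-alternating set has size $<r$,'' but in the sub-case where the terminal edges of $P_r$ are \emph{not} in $M'$ (endpoints of the ear matched outward into $G_{r-1}$ by edges $f,g\in M'$), compatibility counting alone does not deliver this: at most one cycle of a compatible set can traverse the interior of $P_r$, and all remaining cycles live in $G_{r-1}$ where $M'|_{G_{r-1}}$ \emph{is} a perfect matching, so the naive bound is $1+(r-1)=r$ --- no contradiction. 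Note that two alternating cycles may legitimately share the two edges $f,g\in M'$ and still be compatible, so you cannot rule out a cycle through $P_r$ coexisting with a cycle through $u_rv_r$. The paper closes this case by working not with an arbitrary maximum compatible set but with the specific fundamental-cycle system $\mathcal{A}'=\{C_1',\dots,C_r'\}$ produced by the spanning-tree construction in the necessity proof of Lemma \ref{cha}, which satisfies two properties strictly stronger than compatibility: $G=\bigcup_i C_i'$ (so $u_rv_r$ and every edge of $P_r$ must be covered) and any two members share at most one edge of $M'$ (Claim 3). The cycle covering $P_r$ and the cycle covering $u_rv_r$ are then forced to share both $f$ and $g$, violating the second property. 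You mention importing the decomposition from Lemma \ref{cha}, but your stated contradiction never uses these two properties, and without them the case does not close.

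The other sub-case (terminal edges of $P_r$ in $M'$, endpoints matched inward) is also under-justified as written: ``the restriction to $G_{r-1}$ is not a perfect matching, so it cannot carry $r-1$ compatible cycles'' is not an argument --- the surviving cycles live in $G_{r-1}-u_r-v_r$, and a cyclomatic-number count there does not obviously stay below $r-1$ once the deletion disconnects the graph. The paper again argues via $\mathcal{A}'$: here the unique cycle of $\mathcal{A}'$ through $P_r$ is $P_r+u_rv_r$, the non-$M'$ edge of $G_{r-1}$ adjacent to $u_rv_r$ must lie on some other member of $\mathcal{A}'$, that member is forced through the $M'$-edge at $u_r$ and hence through all of $P_r$, and two cycles both containing $P_r$ share one of its non-$M'$ edges (here $P_r$ having length at least $3$ matters), contradicting compatibility directly. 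So the right skeleton is in place, but the ``short case analysis'' is the actual content of the proof and your sketch of it would not go through as stated; you need to carry the full package of Claims 1--3 from Lemma \ref{cha} into the case analysis rather than arguing about arbitrary maximum compatible sets.
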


\begin{proof} By Theorem \ref{main} we have that $G$ is a planar elementary bipartite graph.
We proceed by induction on $r(G)$.
For the case $r(G)=2$, by Lemma \ref{cha},
$G$ is the union of two even cycles
which have just one common edge $e$, and $e$ must belong to  perfect matching $M$ of $G$ with $af(G,M)=r(G)$.
The result holds from that $G$ has a unique perfect matching $M$ containing $e$.

We now consider the case $r:=r(G)\geq 3$. Because  $Af(G)=r(G)$,
for a perfect matching $M$ with $af(G,M)=r$, by Lemma \ref{cha} and its proof,
$G$ has an ear decomposition $(G_{1},G_{2},\ldots,G_{r}(=G))$
such that  $M_i=M\cap E(G_i)$
is a perfect matching of $G_i$,
and for the two ends $u_i$ and $v_i$
of the $i$-th ear $P_i$, $u_iv_i\in M_{i-1}$.
So $G=G_{r-1}+P_r$ and $af(G_{r-1},M_{r-1})=r-1$. For any (other) perfect matching $M^\prime$  of $G$
with $af(G,M^\prime)=r$, $G$ also has an ear decomposition $(G_{1}',G_{2}',\ldots,G_{r}'(=G))$ with the above properties. The other corresponding notations are given:  $P_i'$, $u_i'$ and $v'_i$, and $M_i'$.  We want to show $M=M^\prime$.

Let $C_i':=P_i'+u_i'v_i'$ for each $1\le i\le r$. Then by the proof of Lemma \ref{cha}  $\mathcal{A}^\prime=\{C_1^\prime,C_2^\prime,\ldots,C_r^\prime\}$ is a compatible $M'$-alternating set of $G$,
any two cycles of $\mathcal{A}^\prime$ have at most one
common edge in $M^\prime$,
and $G=\bigcup_{i=1}^{r}C_i^\prime$.
Hence $P_r$ is passed through by a cycle of $\mathcal{A}^\prime$.
Since $G$ is simple,
$P_r$ is a path with at least 3 edges.
So $P_r$ has an edge not in $M^\prime$. This
 implies that just one cycle of $\mathcal{A}^\prime$ can
pass through $P_r$, say $C_{i_0}^\prime$.
\vskip 4mm

\noindent {\bf Claim.}  $u_rv_r\in M^\prime$.

\begin{proof}
To the contrary,
suppose $u_rv_r\notin M^\prime$. Since $P_r\subset C_{i_0}'$, $P_r$ is an $M^\prime$-alternating path.
If both terminal edges of $P_r$  are not in $M^\prime$,
then $C_{i_0}^\prime$ has to pass the two
 edges $f$ and $g$ of $G_{r-1}$ which are in  $M'$ and incident to $u_r$ and $v_r$.
Since $G=\bigcup_{i=1}^{r}C_i^\prime$,
$u_rv_r$ is passed by a cycle
$C_{j_0}^\prime\in\mathcal{A}^\prime$ ($j_0\neq i_0$).
Noting that $C_{j_0}^\prime$ is $M^\prime$-alternating and $f,g\in M'$,
$C_{j_0}^\prime$ has to pass $f$ and $g$.
So $C_{i_0}^\prime$ and $C_{j_0}^\prime$ have at least two
common edges in $M^\prime$, a contradiction.
Now suppose both terminal edges of $P_r$  belong to
$M^\prime$. Then $C_{i_0}'=P_r+u_rv_r$. Let $f$ be an edge of $G_{r-1}$ adjacent to $u_rv_r$. Then $f\notin M'$ and $\mathcal{A}^\prime$ has one cycle $C_{i_0'}'$
which passes through $f$ and thus through $P_r$.  So $C_{i_0}^\prime$ and $C_{i_{0’}}^\prime$ both contain $P_r$. This implies that they have at least one common edge not in $M'$,  contradicting that $C_{i_0}^\prime$ and $C_{i_{0’}}^\prime$ are compatible.
\end{proof}

By the claim, $C_{i_0}'=P_r+u_rv_r$. so $G_{r-1}=\bigcup_{i=1, \not=i_0}^{r-1}C_i^\prime$, and
 $M'|_{G_{r-1}}:=M'\cap E(G_{r-1})$
is a perfect matching of $G_{r-1}$. Further $G_{r-1}$ has compatible $M'|_{G_{r-1}}$-alternating set $\mathcal{A}^\prime\setminus\{C_{i_0}'\}$  and compatible $M_{r-1}$-alternating set $\mathcal{A}\setminus\{C_r\}$, which implies that $af(G_{r-1},M'|_{G_{r-1}})\\=af(G_{r-1},M_{r-1})=r-1$.
By the induction hypothesis,
$M_{r-1}=M'|_{G_{r-1}}$.
On the other hand,  $M\cap E(P_r)=M^\prime\cap E(P_r)$. Hence $M=M'$. \end{proof}

\begin{figure}[h]
  \centering
    \includegraphics[width=35mm]{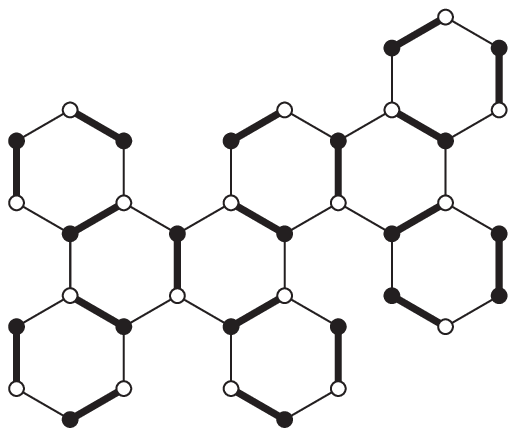}
    \hspace{10mm}
    \includegraphics[width=35mm]{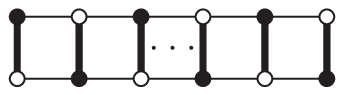}
       \caption{An all-kink catahex (left) and a straight chain polyomino (right).}
  \label{example}
\end{figure}

A \emph{polyomino} (resp. \emph{hexagonal system}) $G$ is a finite connected subgraph of a hexagonal (resp. square) grid in the plane such that
and each edge is contained in a regular square (resp. hexagon) and every interior face is surrounded by a regular square (resp. hexagon).
A polyomino graph  is  a \emph{straight chain} if it is 2-connected and each vertex is contained in at most two squares.
A hexagonal system $H$ is called \emph{all-kink catahex}
if each vertex is contained in at most two hexagons and  there is no hexagon
such that its intersections with two other hexagons are two parallel edges (see Fig. \ref{example}).

For  a straight chain polyomino
(resp. an all-kink catahex) $G$,
we can construct a perfect matching of $G$ such that each square (resp. hexagon) is $M$-alternating. 
By Theorems \ref{antiforcingset} and \ref{maxantiforcing},
we have  $af(G,M)=Af(G)=r(G)$.
Conversely, suppose that a hexagonal system (resp. polyomino) $G$ has a perfect matching $M$ such that $af(G,M)=Af(G)=r(G)$.  By Lemma \ref{general} we have that $G$ is elementary since it has no cut edges.  Lemma \ref{cha} implies that
$G$ has an $M$-alternating hexagon (resp. square) which has at most one adjacent hexagon (resp. square) and the common edge belongs to $M$. So by the inductive way we can confirm the necessities of  the following results.

\begin{corollary}\label{hex}{\bf .}\cite{Zh2}
Let $G$ be a  hexagonal system with $n$ hexagons.
Then $Af(G)=n$ if and only if $G$ is  an all-kink catahex.
\end{corollary}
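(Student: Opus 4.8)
The plan is to reduce the statement to the general characterization already obtained in Lemma~\ref{cha} (via Lemma~\ref{general}) and then translate the abstract ear decomposition back into the rigid geometry of the hexagonal grid. The first observation is that for a hexagonal system $G$ with $n$ hexagons one has $r(G)=n$: since $G$ is connected and planar, $r(G)=|E(G)|-|V(G)|+1$ equals the number of bounded faces, and every bounded face is a hexagon, so $r(G)=n$. Thus the assertion $Af(G)=n$ is precisely the assertion $Af(G)=r(G)$, and the machinery of Section~3 applies verbatim. The geometric fact I will use repeatedly is that in a single hexagon the three edges of any alternating $3$-matching (one of the two Kekul\'e triples) are pairwise in \emph{meta} position; in particular no two parallel (opposite) edges of a hexagon can both belong to such a matching.

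For sufficiency I would argue directly. Given an all-kink catahex $G$, I construct a perfect matching $M$ together with the compatible $M$-alternating set consisting of all $n$ hexagon boundaries. Because each vertex lies in at most two hexagons, two adjacent hexagons meet in exactly one edge, and a branching hexagon shares exactly the three pairwise non-adjacent edges of one Kekul\'e triple; because $G$ is all-kink, no hexagon meets its neighbours in two parallel edges. Hence for every hexagon the edges it shares with neighbours are pairwise non-parallel and can all be placed in $M$ at once while keeping that hexagon $M$-alternating. Processing the hexagons along a reducible face decomposition, I attach each new hexagon across a single shared edge already lying in $M$ and extend $M$ alternately along the remaining path of length $5$; this yields an $M$ with every hexagon $M$-alternating and every shared edge in $M$. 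The hexagon boundaries are then pairwise disjoint or meet only in $M$-edges, so they form a compatible $M$-alternating set of size $n$. By Theorem~\ref{antiforcingset} together with Theorem~\ref{maxantiforcing} we get $n\le c^{\prime}(M)\le af(G,M)\le Af(G)\le r(G)=n$, forcing $Af(G)=n$.

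For necessity, assume $Af(G)=r(G)=n$. A $2$-connected hexagonal system has no cut edges, so Lemma~\ref{general} forces $G$ to be elementary (no fixed edges can occur); then Lemma~\ref{cha} supplies a perfect matching $M$ and an ear decomposition $(G_0,\ldots,G_r(=G))$ in which each ear closes up to an $M$-alternating cycle $C_i=P_i+u_iv_i$ with $u_iv_i\in M_{i-1}$, and by the proof of that lemma the $C_i$ are compatible and any two share at most one edge, which must lie in $M$. Two geometric facts then extract the structure. First, $G$ can have no interior vertex: such a vertex lies on three hexagons and hence on three pairwise shared edges, all of which would have to lie in $M$, which is impossible since the vertex is covered by a single $M$-edge; therefore each vertex lies in at most two hexagons. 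Second, no hexagon can share two parallel edges with its neighbours, because two parallel edges of a hexagon are never simultaneously present in an alternating $3$-matching of that hexagon, yet both shared edges would have to lie in $M$ while the hexagon stays $M$-alternating. Together these show $G$ is an all-kink catahex. Running this as an induction on $n$, Claim~4 in the proof of Lemma~\ref{cha} produces a terminal cycle $C_r$ with exactly one shared edge $u_rv_r\in M$; in the hexagonal grid this is a leaf hexagon, deleting the interior of $P_r$ returns a smaller hexagonal system still satisfying the hypotheses, and the inductive step preserves the two obstructions above.

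The main obstacle I anticipate is the faithful passage between the combinatorial ear decomposition of Lemma~\ref{cha} and the grid geometry—specifically, verifying that each fundamental cycle $C_i$ is genuinely a single hexagon (not a larger face-cycle) and that ``shared $M$-edge'' coincides with ``edge common to two hexagons.'' The clean route is exactly the induction above: the terminal cycle $C_r$ is a leaf face sharing one $M$-edge, which in the hexagonal grid is forced to be a hexagon adjacent to at most one other hexagon, so the reduction is legitimate at each step, and the parallel-edge and interior-vertex contradictions are precisely what keep the reduced system an all-kink catahex. Uniqueness of the extremal matching for $n\ge 2$ then comes for free from Corollary~\ref{count}.
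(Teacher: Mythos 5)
Your proposal is correct and follows essentially the same route as the paper: sufficiency by constructing a perfect matching in which every hexagon is $M$-alternating so that the $n$ face boundaries form a compatible $M$-alternating set (combined with Theorems \ref{antiforcingset} and \ref{maxantiforcing}), and necessity by invoking Lemma \ref{general} to get elementarity, then Lemma \ref{cha} and an induction peeling off the terminal cycle with a single shared $M$-edge. The geometric details you supply (no interior vertex, no hexagon with two parallel shared edges) are exactly the translation the paper leaves implicit in its phrase ``by the inductive way.''
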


\begin{corollary}\label{poly}{\bf .}
Let $G$ be a polyomino graph  with $n$ squares.
Then $Af(G)=n$ if and only if $G$ is a straight chain.
\end{corollary}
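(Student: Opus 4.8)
The plan is to run both directions through the machinery already assembled, anchored by one arithmetic observation: since every interior face of a polyomino is a square, Euler's formula gives that the cyclomatic number $r(G)$ equals the number $n$ of squares. Thus the hypothesis ``$Af(G)=n$'' is literally ``$Af(G)=r(G)$'', and Theorem \ref{main}, Lemma \ref{general}, and Lemma \ref{cha} all become available. I would also record at the outset that, by definition, every edge of a polyomino lies on a square and hence on a $4$-cycle, so no edge of $G$ is a cut-edge; consequently $G$ is $2$-edge-connected.

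For sufficiency, suppose $G$ is a straight chain of $n$ squares. Following the construction indicated just before the statement, I would exhibit a perfect matching $M$ for which every square is $M$-alternating (e.g.\ the matching of all the rungs between the two rows). Because consecutive squares of a straight chain meet in exactly one edge and that edge lies in $M$, the $n$ squares form a compatible $M$-alternating set. Theorem \ref{antiforcingset} then gives $af(G,M)\ge n$, while Theorem \ref{maxantiforcing} gives $af(G,M)\le Af(G)\le r(G)=n$; hence equality holds throughout and $Af(G)=n$.

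For necessity, assume $Af(G)=n=r(G)$. Since $G$ has no cut-edge, the condition in Lemma \ref{general} that every fixed edge be a cut-edge can hold only vacuously, so $G$ has no fixed edge and is therefore a single elementary component. Lemma \ref{cha} then supplies a perfect matching $M$ and a bipartite ear decomposition $(G_0,\ldots,G_r(=G))$ whose fundamental cycles $C_1,\ldots,C_r$ form a compatible $M$-alternating set with $G=\bigcup_i C_i$, any two sharing at most one edge and each such edge lying in $M$. I would argue by induction on $n$ (the base $n=1$ being a single square) by peeling the last ear: by Claim~4 in the proof of Lemma \ref{cha}, $C_r$ has exactly one shared edge $u_rv_r\in M$ and every other vertex of $C_r$ has degree $2$ in $G$. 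The decisive point is that $C_r$ must then be a single square, for if $C_r$ enclosed two or more squares the interior grid edges would place at least two vertices of degree $\ge 3$ on $C_r$ distinct from $u_r,v_r$, contradicting the degree condition. Deleting the interior of $P_r=C_r-u_rv_r$ yields a polyomino $G_{r-1}$ with $n-1$ squares and $Af(G_{r-1})=r-1$, which by induction is a straight chain; re-attaching the square $C_r$ along the single $M$-edge $u_rv_r$ keeps the chain straight.

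The step I expect to be the main obstacle is precisely this geometric translation from Lemma \ref{cha} to the ``each vertex in at most two squares'' condition. The crux is that compatibility of the full system of $n$ squares forbids two $M$-edges from meeting at a common vertex; a branch or a turn would place some vertex in three squares, and the two edges it shares with the central square would then both be forced into $M$ while meeting at that vertex, an impossibility. I would isolate this as the lemma that a square adjacent to two others along edges with a common endpoint cannot have both shared edges in $M$, which is exactly the obstruction ruled out by the ear decomposition. This mirrors the hexagonal case (Corollary \ref{hex}) verbatim, with ``hexagon'' replaced by ``square''.
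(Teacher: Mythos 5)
Your proposal is correct and follows essentially the same route as the paper: sufficiency via a perfect matching (the all-rungs matching) making every square $M$-alternating, so that the $n$ squares form a compatible $M$-alternating set squeezed between Theorems \ref{antiforcingset} and \ref{maxantiforcing} with $r(G)=n$; and necessity via Lemma \ref{general} (no cut-edges, hence $G$ elementary) followed by the ear decomposition of Lemma \ref{cha}, peeling off the last ear as a square with a single shared edge lying in $M$ and inducting. The paper gives this only as a brief sketch in the paragraph preceding Corollaries \ref{hex} and \ref{poly}, and your write-up actually supplies more of the supporting detail (the degree-$2$ argument that $C_r$ is a single face, and the observation that two shared $M$-edges cannot meet at a vertex, which is exactly what forces the new square to attach at an end rung and keeps the chain straight).
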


\section{Anti-forcing spectra of even polygonal chains}

Let $G$ be a connected plane graph. The boundary of $G$ means the boundary of the exterior face of $G$. $G$ is called \emph{outerplane} if all the vertices of $G$ are on the boundary. A \emph{face cycle} of $G$ means the boundary of an interior face if it is a cycle.
A 2-connected plane bipartite graph $G$ is called an \emph{even polygonal chain}
if  $G$ is outerplane  and  each face cycle
is adjacent at most two face cycles.
As special cases, some molecular graphs are even polygonal chains,
such as hexagonal chains and phenylene chains. By Theorem \ref{face}, an even polygonal chain is elementary.
By Theorem 2.10 in \cite{Zhang},
each interior face of an even polygonal chain is resonant.


An edge of an even polygonal chain $G$ is called \emph{boundary edge}
if it is on the boundary of  $G$. An edge of $G$ is called \emph{shared}
if it is the common edge of two face cycles. A face cycle of $G$ is called {\em terminal} if it has only one shared edge of $G$. A face cycle $s$ of $G$ is called a \emph{kink} if it has two shared edges that can be contained simultaneously
in a perfect matching of $s$, that is, the two  edges  go clockwise along $s$ from white (resp. black) ends to
black (resp. white) ends. If all the face cycles of $G$ except for the terminal are kinks,
then we say $G$  \emph{all-kink}. For example, a straight chain polyomino is all-kink.
$G$ is called a \emph{linear chain}
if it has no kinks.
A subchain of $G$ is called a \emph{maximal
linear chain} if it can not be contained in a linear
subchain with more face cycles. 

\begin{lemma}\label{resonantface}\cite{Zhang}{\bf .}
Let $G$ be a plane elementary bipartite graph with a perfect matching
$M$, $C$ an $M$-alternating cycle. Then there exists an $M$-resonant
face in the interior of $C$.
\end{lemma}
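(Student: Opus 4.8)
The plan is to argue by an extremal principle. Among all $M$-alternating cycles of $G$ contained in the closed region bounded by $C$ (this family is nonempty since it contains $C$ itself), I would pick one, call it $C^{*}$, whose interior encloses the fewest interior faces of $G$. The whole lemma then reduces to the claim that $C^{*}$ is the boundary of a single interior face: being $M$-alternating, such a face is $M$-resonant and lies in the interior of $C$, as required. So I must show that a minimal $C^{*}$ can enclose no more than one face, i.e. that $C^{*}$ has neither a chord nor a vertex strictly in its interior; in either case I aim to exhibit an $M$-alternating cycle enclosing strictly fewer faces, contradicting minimality.

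I would dispatch the chord case first, as it is clean. Suppose $d=ab$ is a chord of $C^{*}$ lying in $\mathrm{int}(C^{*})$. If $d\in M$, then $a$ would be covered both by $d$ and by the $M$-edge of $C^{*}$ incident to $a$, which is impossible; hence $d\notin M$. Since $G$ is bipartite, $a$ and $b$ lie in different color classes, so each of the two arcs of $C^{*}$ joining $a$ to $b$ has odd length, and each together with $d$ forms an even cycle. An $M$-alternating path of odd length has its two terminal edges of the same $M$-status, and, of the two $C^{*}$-edges at $a$, exactly one lies in $M$ and they lie in different arcs; hence exactly one arc has both its terminal edges in $M$. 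Adjoining $d\notin M$ to that arc yields an $M$-alternating cycle whose interior is strictly contained in $\mathrm{int}(C^{*})$, contradicting minimality. Thus $C^{*}$ is chordless.

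The main obstacle is the remaining case: $C^{*}$ chordless yet with a vertex strictly inside. Topology alone is insufficient here, since an interior $C^{*}$-path need not be $M$-alternating, so I must use the matching. I would pass to the subgraph $G^{*}$ spanned by $C^{*}$ and its interior with $M^{*}=M\cap E(G^{*})$, noting first that no interior vertex can be matched to a vertex of $C^{*}$ (those are already covered along $C^{*}$), so $M^{*}$ is a perfect matching of $G^{*}$ whose outer boundary $C^{*}$ is $M^{*}$-alternating and whose interior contains at least one edge. To produce a strictly smaller $M^{*}$-alternating cycle I would start at an interior vertex and trace a \emph{leftmost} $M^{*}$-alternating walk into the interior; in a finite plane bipartite graph this closes into an $M^{*}$-alternating cycle bounding a region strictly inside $C^{*}$, which (after one more application of the chord reduction above, to guarantee it bounds a genuine interior face of $G$) again contradicts the minimality of $C^{*}$. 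Equivalently, one may invoke the reducible face decomposition of Theorem \ref{face} for the elementary component of $G^{*}$ carrying $C^{*}$. I expect the delicate point to be precisely making this interior alternating cycle rigorous: ensuring the leftmost walk is simple and stays strictly inside $C^{*}$, and reconciling faces of the auxiliary graph with the interior faces of $G$ so that the resonant cycle obtained is truly the boundary of an interior face of $G$ lying inside $C$, rather than a cycle surrounding several such faces.
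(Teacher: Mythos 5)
The paper offers no proof of this lemma at all (it is imported from reference \cite{Zhang}), so your argument has to stand on its own. Your extremal set-up and your chord case are correct and cleanly done. The genuine gap is the remaining case, where $C^{*}$ is chordless but has vertices strictly inside: nothing in your argument there uses the hypothesis that $G$ is \emph{elementary}, and the statement is simply false without it. Take $G$ to be a $6$-cycle $c_1c_2\cdots c_6$ with two extra vertices $u,w$ and extra edges $c_1u$ and $uw$, and $M=\{c_1c_2,c_3c_4,c_5c_6,uw\}$. Then $C^{*}=c_1\cdots c_6$ is a chordless $M$-alternating cycle whose interior vertices are matched among themselves, yet the unique face of $G$ inside $C^{*}$ is not resonant (its boundary is not even a cycle), and the only $M$-alternating cycle in the closed disk is $C^{*}$ itself. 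In this graph your ``leftmost $M^{*}$-alternating walk'' started at $w$ just traces out $C^{*}$ again; it does not close into a cycle bounding a region strictly inside. So the step you yourself flag as ``delicate'' is not a technicality to be polished -- it is the entire content of this case, and the walk heuristic cannot supply it because it would prove a false statement for non-elementary graphs. (Passing to $G^{*}$ makes matters worse: $G^{*}$ need not be elementary even when $G$ is, so invoking Theorem \ref{face} for a component of $G^{*}$ does not reconcile its faces with those of $G$.)

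The case can be repaired, but only by using elementarity. Some edge $e_0=cu$ with $c\in V(C^{*})$ and $u$ an interior vertex exists since $G$ is connected, and $e_0\notin M$ because $c$ is saturated along $C^{*}$ and $u$ inside. Since $G$ is elementary, $e_0$ is allowed, so taking a perfect matching $N\ni e_0$ and the cycle of $M\triangle N$ through $e_0$ yields an $M$-alternating cycle $D$ of $G$ containing $e_0$. Walk along $D$ from $c$ through $e_0$ until the first return to a vertex $b$ of $C^{*}$. If $b=c$ then $D$ itself lies in the closed disk of $C^{*}$ and differs from $C^{*}$, contradicting minimality. Otherwise one obtains an $M$-alternating path $Q$ from $c$ to $b$ whose internal vertices are interior and whose two terminal edges are not in $M$; hence $Q$ has odd length, and exactly the parity argument of your chord case shows that $Q$ together with one of the two arcs of $C^{*}$ between $c$ and $b$ is an $M$-alternating cycle inside the closed disk enclosing strictly fewer faces -- again contradicting minimality. (Note also that your closing parenthetical is unnecessary: you never need the new cycle to bound a face, only to enclose strictly fewer faces than $C^{*}$.)
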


\begin{lemma}\label{z}\cite{Zhang}{\bf .}
Let $G$ be a plane elementary bipartite graph.
Then $Z(G)$ is connected.
\end{lemma}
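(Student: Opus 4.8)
The plan is to prove that $Z(G)$ is connected by showing that any two perfect matchings $M$ and $M'$ of $G$ can be joined by a walk in $Z(G)$, i.e.\ by a finite sequence of face-boundary flips. Recall that flipping an interior face $f$ whose boundary $\partial f$ is an $M$-alternating cycle replaces $M$ by the adjacent vertex $M\triangle\partial f$ of $Z(G)$. First I would reduce the whole statement to a single key claim. The symmetric difference $D:=M\triangle M'$ is a disjoint union of cycles, each of which is simultaneously $M$-alternating and $M'$-alternating. Hence it suffices to prove: \emph{for every $M$-alternating cycle $C$ of $G$, the matchings $M$ and $M\triangle C$ lie in the same component of $Z(G)$, and $M$ can be carried to $M\triangle C$ using only flips of faces in the closed interior of $C$.} Granting this claim I would finish by induction on the number of cycles of $D$: choose one cycle $C\subseteq D$, connect $M$ to $M_1:=M\triangle C$ by the claim, note that $M_1\triangle M'=D\setminus C$ has strictly fewer cycles, and apply the inductive hypothesis to connect $M_1$ (hence $M$) to $M'$.

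To prove the key claim I would induct on the number $k$ of interior faces enclosed by $C$. If $k=1$ then $C=\partial f$ is a single face boundary and one flip of $f$ does the job. If $k\ge 2$, I would invoke Lemma~\ref{resonantface} (applicable precisely because $G$ is plane elementary bipartite) to produce an $M$-resonant face in the interior of $C$; the point is to use such a face, adjacent to $C$, to ``peel off'' one face, replacing $C$ by a new $M$-alternating cycle enclosing $k-1$ interior faces, and then recurse. Over $\mathbb{Z}_2$ the cycle $C$ is the sum $\sum_{f\subseteq\operatorname{int}(C)}\partial f$, so flipping each interior face exactly once realizes $M\triangle C$; thus the entire content of the claim is that these flips can be \emph{ordered} so that at each step the face being flipped is currently resonant and the enclosed region strictly shrinks.

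The hard part will be exactly this ordering. Lemma~\ref{resonantface} only guarantees \emph{some} resonant face inside $C$, not one adjacent to $C$ whose flip decreases the enclosed area; flipping an interior face not touching $C$ would actually enlarge the symmetric difference with the target $M\triangle C$. I would control this by orienting $C$ via the black/white coloring (declaring $C$ proper when its $M$-edges run from white to black as one traverses it with the interior on the left) and attaching to each interior face a winding-number ``height'' $h$ relative to $C$; the claim is then that there is always a proper $M$-resonant face along the current boundary whose flip lowers $h$, so the process is a strictly decreasing monovariant terminating at $M\triangle C$. This is essentially the distributive-lattice structure of the perfect matchings of a plane bipartite graph, and making that orientation/height bookkeeping rigorous is the main technical burden.

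An alternative route would be to induct on the cyclomatic number through the reducible face decomposition of Theorem~\ref{face}, writing $G=G_{r-1}+P_r$ and relating $Z(G)$ to $Z(G_{r-1})$. I would expect the obstacle there to be that the ear $P_r$ introduces new interior vertices, so perfect matchings of $G$ do not restrict directly to perfect matchings of $G_{r-1}$; one must first analyze how matchings distribute over the new face before the inductive hypothesis applies. For that reason I would pursue the symmetric-difference argument above as the primary strategy.
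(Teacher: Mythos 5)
This lemma is quoted from \cite{Zhang}; the paper gives no proof of its own, so the only question is whether your argument stands on its own. Its architecture is the standard one (reduce $M\to M'$ to flipping one $M$-alternating cycle $C$ at a time, then induct on the number of faces enclosed by $C$ using Lemma~\ref{resonantface}), and the first reduction is fine: since the cycles of $D=M\triangle M'$ are vertex-disjoint, $(M\triangle C)\triangle M'=D\setminus C$ has one fewer cycle, whichever flips you used to reach $M\triangle C$. But the key claim is not proved. Lemma~\ref{resonantface} hands you \emph{some} $M$-resonant face $s$ in the interior of $C$, and as you yourself observe, if $\partial s$ shares no edge with $C$ then flipping $s$ moves you \emph{away} from $M\triangle C$ (the symmetric difference acquires the extra cycle $\partial s$); while if $\partial s$ does share edges with $C$ you still owe an argument that $C\triangle\partial s$ decomposes into $(M\triangle\partial s)$-alternating cycles enclosing strictly fewer faces, so that the induction can continue. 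You name the fix --- a properly/improperly oriented version of the resonant-face lemma, or equivalently a height function making some potential such as $\sum_{C'\in M\triangle M'}f(C')$ strictly decrease under a well-chosen flip --- and then declare it ``the main technical burden'' without carrying it out. That burden \emph{is} the lemma: everything before it is bookkeeping, and the statement you need (every proper $M$-alternating cycle contains a proper $M$-resonant face whose flip shrinks the enclosed region) is precisely the refined form of Lemma~\ref{resonantface} proved in \cite{Zhang}. As written, the proposal is a correct plan with the decisive step missing, not a proof.

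Secondary, smaller points: in the inductive step you should also address the case where $C\triangle\partial s$ splits into several cycles (each must be handled by the induction hypothesis separately, which is why the enclosed-face count rather than ``number of faces peeled off'' is the right induction parameter), and your parenthetical that each interior face is flipped exactly once is true here but is a consequence of the height-function analysis, not an input to it.
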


Let $G$ be a plane bipartite graph with a perfect matching $M$. Given a compatible $M$-alternating set $\mathcal{A}$, two cycles $C_1$ and $C_2$ of $\mathcal{A}$ are \emph{crossing} if they share an edge $e$ in $M$ and the four edges adjacent to $e$
alternate in $C_1$ and $C_2$ (i.e., $C_1$ enters into $C_2$ from one side and leaves for the other side via $e$).
$\mathcal{A}$ \emph{is non-crossing}  if any two cycles in $\mathcal{A}$ are not crossing.
For hexagonal systems,
Lei et al.  \cite{Zh2}
proved that any compatible $M$-alternating set can be changed to
  a non-crossing compatible $M$-alternating set with the same cardinality.
This result can be generalized to plane bipartite graphs in the same way as follows.

\begin{lemma}\label{nonc}{\bf .} If $G$ has
a compatible $M$-alternating set $\mathcal{A}$, then
 $G$  admits a non-crossing compatible
$M$-alternating set with size $|\mathcal{A}|$.
\end{lemma}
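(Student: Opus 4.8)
The plan is to induct on the total number of crossings of $\mathcal{A}$. For a compatible $M$-alternating set $\mathcal{B}$ let $\mathrm{cr}(\mathcal{B})$ denote the number of pairs $(\{C,C'\},e)$ for which $C,C'\in\mathcal{B}$ cross at the edge $e$. If $\mathrm{cr}(\mathcal{A})=0$ then $\mathcal{A}$ is already non-crossing and there is nothing to prove; so it suffices to show that whenever $\mathrm{cr}(\mathcal{A})>0$ one can produce a compatible $M$-alternating set $\mathcal{A}'$ with $|\mathcal{A}'|=|\mathcal{A}|$ and $\mathrm{cr}(\mathcal{A}')<\mathrm{cr}(\mathcal{A})$. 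This reduces the whole statement to a single \emph{uncrossing} step applied to one crossing pair, exactly as in the hexagonal case treated in \cite{Zh2}.

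First I would select two cycles $C_1,C_2\in\mathcal{A}$ that cross at some edge. Since $\mathcal{A}$ is compatible, every edge common to $C_1$ and $C_2$ lies in $M$. Viewing $C_2$ as a Jordan curve in the plane, each edge of $C_1$ lies inside or outside $C_2$, and $C_1$ switches sides exactly at the edges where the two cycles cross; as $C_1$ is closed, the number of such crossing edges is even. Hence $C_1$ and $C_2$ share at least two crossing edges $e_1=u_1v_1$ and $e_2=u_2v_2$, both in $M$, and these split each of $C_1,C_2$ into two arcs whose edges incident with $u_1,v_1,u_2,v_2$ are not in $M$ (because $e_i\in M$ and the cycles are $M$-alternating).

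Next I would perform the \emph{switch} at $e_1$ and $e_2$, reconnecting the four local edges at each $e_i$ in the unique way that keeps both $e_1$ and $e_2$ in the resulting cycles. This retention is forced by the matching: each of $u_1,v_1,u_2,v_2$ can be matched only by $e_1$ or $e_2$, so any $M$-alternating cycle through such a vertex must contain the corresponding $e_i$. Writing $C_1=e_1+P+e_2+Q$ and $C_2=e_1+X+e_2+Y$, I set $C_1'=e_1+X+e_2+Q$ and $C_2'=e_1+P+e_2+Y$; then $C_1'\cup C_2'=C_1\cup C_2$, each $C_i'$ contains $e_1,e_2$, and because the switched edges are precisely the $M$-edges while the adjacent arc-edges are not in $M$, alternation is preserved across the junctions, so $C_1',C_2'$ are $M$-alternating cycles with $C_1'\cap C_2'\subseteq M$. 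Thus $\mathcal{A}'=(\mathcal{A}\setminus\{C_1,C_2\})\cup\{C_1',C_2'\}$ is again compatible of the same size. For the count, at every shared edge other than $e_1,e_2$ the unordered pair of local configurations of $C_1',C_2'$ equals that of $C_1,C_2$, so the crossing status there is unchanged; for any third cycle $C_3$ the edges it shares with $C_1\cup C_2$ are inherited with their incident edges unchanged (merely reassigned between $C_1'$ and $C_2'$), creating no new crossing; and at $e_1,e_2$ the local pairing is replaced by the other of the two possibilities, turning each crossing into a non-crossing. Hence $\mathrm{cr}(\mathcal{A}')=\mathrm{cr}(\mathcal{A})-2$, and the induction closes.

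The hard part will be the topological bookkeeping in the switching step: one must verify that reconnecting at the two crossing edges yields exactly two cycles, not one or three. I expect to argue this by a merge-then-split analysis—the first reconnection joins the two distinct cycles $C_1,C_2$ into a single closed curve, and the second, now acting on one curve, splits it back into two—so that the component count returns to $2$. Making this rigorous needs the parity-of-crossings fact above together with a careful check that the matching-forced retention of $e_1,e_2$ selects the orientation-consistent reconnection (so that the arcs $P,X,Q,Y$ join up as written rather than merging into one cycle). Degenerate overlaps, such as further shared edges lying on the switched arcs, must also be accounted for, although by the local analysis above they leave the crossing count unaffected.
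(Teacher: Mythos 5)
Your overall strategy (induct on the number of crossings and perform a local uncrossing at a crossing pair) is the one the paper intends --- note the paper itself writes out no proof here, it only asserts that the argument of \cite{Zh2} for hexagonal systems carries over. But your uncrossing step has a genuine gap. The surgery $C_1'=e_1+X+e_2+Q$, $C_2'=e_1+P+e_2+Y$ is only guaranteed to produce \emph{cycles} when $e_1,e_2$ are the only edges shared by $C_1$ and $C_2$. In general two compatible cycles may cross at $4,6,\dots$ edges and may additionally touch at further shared $M$-edges, and then some shared edge can lie on two of the four arcs that your recombination glues into the \emph{same} new closed walk, which then traverses that edge twice and is not a cycle. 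Concretely: let $C_2$ be a $16$-cycle $w_1w_2\cdots w_{16}$ with $w_1w_2,w_3w_4,\dots,w_{15}w_{16}\in M$, and let $C_1$ pass through $e_1=w_1w_2$, an interior path $w_2x_1x_2w_7$, $e_2=w_7w_8$, an exterior path $w_8y_1y_2w_5$, $e_4=w_5w_6$, an interior path $w_6z_1z_2w_3$, $e_3=w_3w_4$, and an exterior path $w_4t_1t_2w_1$ (with $x_1x_2,y_1y_2,z_1z_2,t_1t_2\in M$). This is a plane bipartite graph, $\{C_1,C_2\}$ is compatible, and $C_1$ crosses $C_2$ at all four shared edges $e_1,e_2,e_3,e_4$. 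For the consecutive pair $(e_1,e_2)$ your arc $X=w_2w_3w_4w_5w_6w_7$ contains $e_3,e_4$, and so does $Q$; hence $C_1'=e_1+X+e_2+Q$ uses $e_3$ and $e_4$ twice and is not a cycle, and the same failure occurs for every other choice of crossing pair. So your closing remark that ``degenerate overlaps \dots leave the crossing count unaffected'' misses the real problem: they destroy the construction itself, not merely the bookkeeping. Moreover your claimed identity $C_1'\cup C_2'=C_1\cup C_2$ cannot be rescued --- in this example every valid non-crossing replacement (e.g.\ the two disjoint cycles $e_1+w_2w_3+e_3+w_4t_1t_2w_1$ and $e_2+w_8y_1y_2w_5+e_4+w_6w_7$) necessarily omits edges of $C_1\cup C_2$.

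A second, smaller gap: your argument that the matching ``forces'' the reconnection only shows that \emph{if} the four arcs reassemble into $M$-alternating cycles then each such cycle contains $e_1$ and $e_2$; it does not show that they reassemble into two cycles at all. Whether the arcs of $C_2-e_1-e_2$ join $v_1$ to $u_2$ and $v_2$ to $u_1$ (so that your formulas even parse) is not a matching fact --- ruling out the other pairing requires the bipartite $2$-colouring (each arc is $M$-alternating with non-matching ends, hence of odd length, which pins down the colours of the four endpoints) together with planarity. Neither bipartiteness nor planarity appears anywhere in your argument, yet the statement is false without them, so any correct proof must invoke them. To repair the proof you need to choose the replacement cycles more carefully, e.g.\ as boundaries of suitable faces of the arrangement $C_1\cup C_2$ (an innermost ``lens'' always exists since every face of the arrangement of two closed curves has even degree), and then re-verify $M$-alternation, compatibility with the rest of $\mathcal{A}$, and the strict decrease of the crossing count for those cycles.
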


\subsection{Continuity}
In this subsection we show that the anti-forcing spectrum of
any even polygonal chain is an integer interval.

Let $G$ be an even polygonal chain with a perfect matching $M$. For
an $M$-alternating cycle $C$ in $G$, let $f(C)$ be the number of faces in the interior of $C$.
By Theorem \ref{compatibleset} and Lemma \ref{nonc},
we can choose a maximum non-crossing compatible $M$-alternating
set $\mathcal{A}$ such that $|\mathcal{A}|=af(G,M)$ and
$f(\mathcal{A})=\sum_{C\in \mathcal{A}}f(C)$
is as small as possible.
By using these notations, we have following lemmas.

\begin{lemma}\label{minindex}{\bf .}
$\mathcal{A}$ contains all $M$-alternating face cycles in $G$,
and any two non-face cycles in $\mathcal{A}$ are inner
disjoint and have at most one common edge in $M$.
\end{lemma}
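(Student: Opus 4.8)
The plan is to translate everything into the interval combinatorics of the chain and then to argue by contradiction against the minimality of $f(\mathcal{A})$, using that $|\mathcal{A}|=af(G,M)=c'(M)$ is already maximum (Theorem \ref{compatibleset} and Lemma \ref{nonc}). First I would fix the combinatorial model. Since $G$ is an even polygonal chain its inner dual is a path, so I label the face cycles $s_1,\dots,s_n$ consecutively and verify that every $M$-alternating cycle $C$ encloses a \emph{contiguous} block $\{s_a,\dots,s_b\}$ and is exactly the boundary of their union; hence $f(C)=b-a+1$, and the $M$-alternating face cycles are precisely the alternating cycles with $f=1$. The workhorse will be a local fact obtained by inspecting a degree-$3$ fusion vertex $x$ on a shared edge $e$: if two $M$-alternating cycles both traverse $e$ and $e\notin M$, the two $M$-edges forced at $x$ by alternation collide, a contradiction. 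Consequently (i) any two $M$-alternating face cycles are compatible, adjacent ones meeting in an edge of $M$, and (ii) if an $M$-alternating cycle $C$ shares a non-$M$ edge with an $M$-alternating face $s$, then $C$ must in fact enclose $s$, since a mere side-adjacency would leave $C$ non-alternating at the fusion vertex. Together these give the laminar picture: a non-crossing compatible family corresponds to a laminar family of intervals meeting only in isolated $M$-edges.

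For the first assertion I argue by contradiction. If an $M$-alternating face cycle $s=s_k$ were absent, then maximality of $|\mathcal{A}|$ forces $s$ to be incompatible with some $C\in\mathcal{A}$, i.e.\ they share a non-$M$ edge; by fact (ii) above $C$ encloses $s_k$ as a terminal face of its block, so $f(C)\ge 2$. Replacing $C$ by $s_k$ then yields, by the laminar description together with fact (i), another non-crossing compatible $M$-alternating set of the \emph{same} cardinality but with $f$ dropped by $f(C)-1\ge 1$, contradicting the minimality of $f(\mathcal{A})$. Hence every $M$-alternating face cycle lies in $\mathcal{A}$.

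For the second assertion, note first that two disjoint intervals in a chain meet in at most the single edge shared by their adjacent terminal faces, which compatibility forces into $M$; thus ``at most one common edge in $M$'' will follow once inner disjointness is established. To prove inner disjointness I again argue by contradiction: if two non-face cycles $C_1,C_2$ are not inner disjoint, then, since compatibility rules out partially overlapping intervals (their boundaries would run together along the overlap and share non-$M$ perimeter edges), they must be nested, say $C_2\subsetneq C_1$ with both blocks of length $\ge 2$. Here I would invoke Lemma \ref{resonantface} to extract an $M$-resonant, hence $M$-alternating, face strictly between the two boundaries and, using the first assertion, replace $C_1$ by a strictly smaller $M$-alternating cycle that remains compatible and non-crossing with $\mathcal{A}\setminus\{C_1\}$; this lowers $f(\mathcal{A})$, the desired contradiction.

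The main obstacle is the verification shared by both exchanges: after substituting a cycle by a smaller one I must guarantee that the new family is still pairwise compatible and non-crossing and of unchanged cardinality, so that the minimality of $f(\mathcal{A})$ genuinely applies. The delicate point is the nested case of the second assertion, where I must exhibit an honestly $M$-alternating cycle of smaller interior to serve as the replacement: the fusion-vertex analysis shows that naive choices—shrinking a block at a terminal, or taking a symmetric difference $C_1\triangle C_2$—can fail to alternate at the boundary fusion vertices, because the matched edge there points into the inner region. Thus the crux of the proof is to build the correct smaller alternating cycle, guided by the resonant face supplied by Lemma \ref{resonantface}, and to confirm it respects the laminar structure already in place.
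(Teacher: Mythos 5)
Your interval model of the chain and your argument for the first assertion are sound and essentially match the paper: if an $M$-alternating face cycle $s$ is missing, maximality of $|\mathcal{A}|$ forces some $C\in\mathcal{A}$ to share a non-$M$ edge with $s$, your fusion-vertex analysis (correctly) places $s$ in the interior of $C$, and exchanging $C$ for $s$ preserves cardinality, compatibility and non-crossingness while strictly decreasing $f$, contradicting the choice of $\mathcal{A}$. That is exactly the paper's proof of the first part, with the implicit geometric step made explicit.

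The second assertion is where the proposal has a genuine gap. Having reduced to the nested case, you propose a second exchange --- replacing the outer cycle by a strictly smaller $M$-alternating cycle to lower $f(\mathcal{A})$ --- but you never construct that cycle; you explicitly flag its construction as ``the crux'' and observe that the natural candidates (truncating the block, taking the symmetric difference) fail to alternate at the fusion vertices. As written, the nested case is therefore not proved. Moreover, the exchange is unnecessary: two nested cycles of $\mathcal{A}$ already contradict \emph{compatibility} directly, which is how the paper concludes. Concretely, let $C_1\subseteq\operatorname{int}(C_2)$ with $C_1$ a non-face cycle. By Lemma \ref{resonantface} there is an $M$-alternating face $s_1$ inside $C_1$, and by the first assertion $s_1\in\mathcal{A}$, so $s_1$ meets $C_1$ only in edges of $M$. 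Since $C_1$ bounds at least two faces of the chain, the edges of $s_1$ lying on $C_1$ include its two non-shared edges, both in $M$, and these lie on $C_2$ as well; following $C_1$ and $C_2$ past an endpoint of such an edge (a fusion vertex of degree $3$) both cycles must continue along the same third edge, which is not in $M$. Hence $C_1$ and $C_2$ share an edge outside $M$, which is impossible for two members of $\mathcal{A}$ --- no replacement and no appeal to the minimality of $f$ is needed. A smaller point: your exclusion of partially overlapping blocks by ``shared non-$M$ perimeter edges'' alone is not quite sufficient (an overlap in a single linear $4$-face can leave the two cycles compatible but crossing), so you must also invoke the non-crossing hypothesis there.
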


\begin{proof}
Suppose that $G$ has  an $M$-alternating face cycle $s\notin \mathcal{A}$.
Then there exists a cycle $C\in\mathcal{A}$
such that $C$ and $s$ are not compatible. That is,
$C$ and $s$ have a common edge not in $M$. So $s$ must be in the interior of $C$.
We claim that $(\mathcal{A}\setminus\{C\})\cup \{s\}$
is a compatible $M$-alternating set.
Otherwise there is a cycle $C^\prime\in \mathcal{A}\setminus\{C\}$
which is not compatible with $s$.
So $s$ is in the interior of $C^\prime$. This
 implies that $C$ and $C^\prime$ are
either incompatible or crossing, a contradiction.
 Since $f(C)>1$, $f((\mathcal{A}\setminus\{C\})\cup \{s\})<f(\mathcal{A})$, a contradiction.

Suppose $\mathcal{A}$ has   two non-face cycles $C_1$ and $C_2$ such that
 $C_1$ is contained in the interior of $C_2$.
By Lemma \ref{resonantface},
there exists  an $M$-alternating face cycle $s_1$ in the interior
of $C_1$. By the above proof, $s_1\in \mathcal A$.
Since $G$ is a chain,
$s$ and $C_1$ must have two common edges, which belong to $M$ and also to $C_2$.  This implies that $C_1$ and $C_2$ have a common edge not in $M$, a contradiction.
Hence any two cycles in $\mathcal{A}$ are inner disjoint and have at most one common edge in $M$.
\end{proof}

\begin{lemma}\label{unique}{\bf .}
 $G$ has a unique maximum
non-crossing  compatible $M$-alternating set $\mathcal{A}_M$
such that $f(\mathcal{A}_M)$ is minimum.
\end{lemma}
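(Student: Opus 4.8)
The plan is to prove uniqueness by contradiction, comparing two candidate sets. Suppose $G$ had two maximum non-crossing compatible $M$-alternating sets $\mathcal{A}$ and $\mathcal{B}$, each of minimum $f$-value, with $\mathcal{A}\neq\mathcal{B}$. By Lemma \ref{minindex} both of them contain every $M$-alternating face cycle of $G$, so the face cycles already lie in $\mathcal{A}\cap\mathcal{B}$ and any discrepancy is confined to the non-face cycles; again by Lemma \ref{minindex} the non-face cycles of each set are pairwise inner disjoint and meet in at most one edge of $M$.

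The first step is to translate everything into the one-dimensional combinatorics forced by the chain structure. Since $G$ is an even polygonal chain, its interior faces form a path in the dual, so every cycle of $G$ is the boundary $C_{[i,j]}$ of a consecutive block of faces $s_i,\dots,s_j$; in particular a non-face cycle is recorded by an interval of length at least $2$, the internal shared edges $e_i,\dots,e_{j-1}$ lie in the interior of $C_{[i,j]}$, while the two end shared edges $e_{i-1}$ and $e_j$ lie on $C_{[i,j]}$. I would then record the basic rigidity: the two terminal faces $s_i,s_j$ of a non-face cycle $C_{[i,j]}$ belonging to a compatible set cannot be $M$-alternating. Indeed, an $M$-alternating terminal face would by Lemma \ref{minindex} belong to the same set, yet it shares with $C_{[i,j]}$ its top path, a cap edge and its bottom path, i.e. a path of at least three consecutive boundary edges, which cannot all lie in $M$; hence the two cycles would share a non-matching edge and be incompatible, a contradiction. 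Consequently the inner-disjoint non-face cycles of an optimal set become non-overlapping intervals that may abut only across a shared edge of $M$, and the endpoints of each such interval are tied to the behaviour of $M$ on the corresponding end shared edges.

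With this dictionary I would run an exchange step. Choose the leftmost non-face cycle $C=C_{[i,j]}$ lying in $\mathcal{A}\setminus\mathcal{B}$. Because $\mathcal{B}$ has maximum cardinality and $C$ is $M$-alternating, $\mathcal{B}\cup\{C\}$ cannot be a compatible non-crossing set, so some cycle $C'\in\mathcal{B}$ conflicts with $C$; as the $M$-alternating face cycles are all compatible with $C$, the cycle $C'$ must be a non-face cycle whose interval meets $[i,j]$. Using the terminal-face rigidity together with the alternation condition on the end shared edges, I would show that $C'$ is forced to enclose exactly the same block, so that $C'=C$, contradicting $C\notin\mathcal{B}$; in the remaining situation, where the two intervals only abut at a matching edge, I would instead exhibit a set of the same cardinality but strictly smaller $f$-value, contradicting the minimality of $f(\mathcal{B})$. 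It will be convenient to organize the whole comparison as an induction on the number of faces $n$, peeling off a terminal face $s_1$ and identifying the restriction of each optimal set to the subchain $s_2,\dots,s_n$, which confines the casework to one end of the chain at a time.

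The step I expect to be the main obstacle is exactly this endpoint bookkeeping: one must verify that, for a fixed block of faces to be enclosed, the requirements of $M$-alternation, compatibility, the non-crossing condition, maximum cardinality and minimum $f$ together determine the interval $[i,j]$ uniquely. The delicate case is when two candidate blocks abut at a shared edge lying in $M$, so that the corresponding cycles are compatible and equinumerous; there uniqueness cannot be read off from compatibility alone and must be forced by the minimality of $f$, through a careful analysis of how the alternation of $M$ propagates across the shared matching edge.
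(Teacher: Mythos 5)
Your setup is sound and matches the paper's: both optimal sets contain every $M$-alternating face cycle by Lemma \ref{minindex}, so any difference lives among the non-face cycles, and the chain structure lets you record each non-face cycle as the boundary of an interval of consecutive faces. Your ``terminal-face rigidity'' observation is also correct. But the proof stops exactly where the content of the lemma begins. You choose $C\in\mathcal{A}\setminus\mathcal{B}$, deduce that some non-face cycle $C'\in\mathcal{B}$ is incompatible with $C$, and then assert that the constraints ``force $C'$ to enclose exactly the same block.'' That assertion is not established, and it is not the right dichotomy: two incompatible interval-cycles are two \emph{properly overlapping but distinct} intervals (your ``remaining situation,'' where the intervals merely abut at a matching edge, is a situation in which the two cycles share only an edge of $M$ and are therefore compatible, so it never arises as a conflict). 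The genuinely hard case --- say $C$ bounds faces $[1,3]$ in $\mathcal{A}$ while $C'$ bounds $[2,4]$ in $\mathcal{B}$ --- is neither covered by ``$C'=C$'' nor by your abutting case, and nothing in the outline rules it out or derives a contradiction from it.

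The missing mechanism is the one the paper uses: since $G$ is outerplane, the interiors of the incompatible cycles $C$ and $C'$ share a common region whose boundary $Q$ is again an $M$-alternating cycle with $f(Q)$ strictly smaller than at least one of $f(C),f(C')$, say $f(C)$; by Lemma \ref{minindex} every other member of $\mathcal{A}$ is compatible and non-crossing with $Q$, so $(\mathcal{A}\setminus\{C\})\cup\{Q\}$ is a maximum non-crossing compatible $M$-alternating set with strictly smaller $f$-value, contradicting minimality. Some such exchange producing a strict decrease of $f$ in the properly-overlapping case is indispensable; without it (or an equivalent argument), the ``endpoint bookkeeping'' you flag as the main obstacle remains open, so the proposal as written does not prove the lemma.
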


\begin{proof}
To the contrary,
suppose  there is another maximum
non-crossing  compatible $M$-alternating set  $\mathcal{A}$
such that $f(\mathcal{A})=f(\mathcal{A}_M)$.
By Lemma \ref{minindex},
all $M$-alternating face cycles belong to
$\mathcal{A}\cap\mathcal{A}_M$.
Since $\mathcal{A}$ and $\mathcal{A}_M$
are different,
there exists a pair of non-face cycles $C\in \mathcal{A}$
and $C^\prime\in \mathcal{A}_M$ that are incompatible.
Since $G$ is outerplane,
$C$ and $C^\prime$ must have a common region in
their interiors.
Let $Q$ be the boundary of the common region.
Then $Q$ is an $M$-alternating cycle.
Note that $f(Q)$ is less than at least one of $f(C)$ and $f(C^\prime)$,
say  $f(C)$.
By Lemma \ref{minindex},
any cycle of $\mathcal{A}\setminus\{C\}$
is non-crossing and compatible with $Q$.
Therefore
$(\mathcal{A}\setminus\{C\})\cup\{Q\}$
is a maximum non-crossing compatible $M$-alternating set with $f((\mathcal{A}\setminus\{C\})\cup\{Q\})<f(\mathcal{A})$, a contradiction.
\end{proof}

\begin{lemma}\label{gap}{\bf .}
For two perfect matchings  $M$ and $M^\prime$
of  $G$ such that $s=M\triangle M^\prime$ is a face cycle of $G$,
 $|af(G,M)-af(G,M^\prime)|\leq2$.
If $af(G,M)-af(G,M^\prime)=2$, then (i) $s$ is a  kink that is not 4-cycle, (ii) $\mathcal{A}_{M}\setminus\{s\}$ contains
exactly two cycles $C_e$ and $C_f$  passing through
the two shared edges $e$ and $f$ of $s$ respectively, and (iii) $\mathcal{A}_{M^\prime}=\mathcal{A}_M\setminus\{C_e,C_f\}$.
\end{lemma}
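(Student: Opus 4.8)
The plan is to compare the canonical maximum non-crossing compatible alternating sets $\mathcal{A}_M$ and $\mathcal{A}_{M'}$ supplied by Lemma \ref{unique}, recalling that $|\mathcal{A}_M|=af(G,M)$ and $|\mathcal{A}_{M'}|=af(G,M')$ by Theorem \ref{compatibleset} and Lemma \ref{nonc}. Since $M$ and $M'$ agree off the face cycle $s=M\triangle M'$, the essential observation is that almost every cycle of $\mathcal{A}_M$ is simultaneously $M$- and $M'$-alternating, so it survives the switch unchanged; only the cycles meeting $s$ can be destroyed. Thus I would first isolate exactly which members of $\mathcal{A}_M$ meet $s$, transform $\mathcal{A}_M$ into an $M'$-alternating set by deleting those cycles while retaining $s$ (which is an $M'$-alternating face cycle, hence a legitimate member by Lemma \ref{minindex}), and thereby bound $af(G,M')\ge af(G,M)-(\text{number deleted})$. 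Running the same argument with the roles of $M$ and $M'$ interchanged then yields $|af(G,M)-af(G,M')|\le 2$, once the number deleted is shown to be at most two.

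\emph{Key structural step.} I claim that, apart from $s$ itself, a cycle $C\in\mathcal{A}_M$ can meet $s$ only in a single edge of $M$, and that edge must be a shared edge of $s$. Indeed, since $C$ and $s$ are compatible they intersect only in edges of $M$; because $G$ is an outerplane chain, if $C\neq s$ shared a whole boundary arc of $s$ (as would happen if $C$ enclosed $s$) then $C$ and $s$ would also share the non-$M$ edges of that arc, a contradiction, so $C\cap s$ is a single $M$-edge $g$. Moreover a cycle distinct from $s$ through a boundary edge $g$ of $s$ would have to enclose the exterior face, which is impossible; hence $g$ is a shared (interior) edge. Since $s$ has at most two shared edges $e,f$, and Lemma \ref{minindex} (inner-disjointness of non-face cycles, together with the arc argument applied to face cycles) forces at most one cycle of $\mathcal{A}_M\setminus\{s\}$ through each of $e,f$, at most two cycles $C_e,C_f$ of $\mathcal{A}_M$ meet $s$. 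Deleting them and retaining $s$ gives a non-crossing compatible $M'$-alternating set of size $\ge|\mathcal{A}_M|-2$, which proves the inequality and, in the case of a difference of $2$, forces both $C_e$ and $C_f$ to exist; this is conclusion (ii).

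\emph{The equality analysis.} Assume $af(G,M)-af(G,M')=2$, so both $C_e,C_f$ exist. Their common edges with $s$ are $e,f\in M$, so $e$ and $f$ lie together in the perfect matching $M\cap E(s)$ of the alternating cycle $s$; as any two edges of one perfect matching of an even cycle run in the same sense around it, $s$ is a kink, giving the first half of (i). To exclude a $4$-cycle I would exhibit a single $M'$-alternating cycle that recovers one deletion: when $s$ is a straight-through square with opposite shared edges $e=v_1v_2$ and $f=v_3v_4$, the cycle $C^{*}=(C_e-e)\cup\{v_2v_3\}\cup(C_f-f)\cup\{v_4v_1\}$ is $M'$-alternating and compatible with $\mathcal{A}_M\setminus\{C_e,C_f\}$, so $(\mathcal{A}_M\setminus\{C_e,C_f\})\cup\{C^{*}\}$ would force $af(G,M')\ge af(G,M)-1$, contradicting the difference $2$; hence $s$ is not a $4$-cycle, completing (i).

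Finally, for (iii) I would argue that $\mathcal{A}':=\mathcal{A}_M\setminus\{C_e,C_f\}$, already shown to be a maximum non-crossing compatible $M'$-alternating set of the correct size $af(G,M')$, is in fact the canonical one. By Lemma \ref{unique} it suffices to show that $f(\mathcal{A}')$ is minimum among such sets; I would establish this through the reverse construction, in which flipping the now $M'$-alternating face $s$ back carries $\mathcal{A}_{M'}$ to a maximum $M$-alternating set by reinserting the two cycles through the $M'$-shared edges, and then comparing $f$-indices through this correspondence against the minimality of $f(\mathcal{A}_M)$ to pin down $\mathcal{A}'=\mathcal{A}_{M'}$. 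The main obstacle I anticipate is exactly this last step: controlling the $f$-indices under the two-way flip and invoking the uniqueness in Lemma \ref{unique} cleanly, together with the careful verification that $C^{*}$ and the reinserted cycles remain non-crossing and compatible with the untouched part of $\mathcal{A}_M$, so that the relevant counts are tight rather than merely bounded.
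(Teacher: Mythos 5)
Your treatment of the first assertion and of parts (i) and (ii) follows the paper's proof in all essentials: bound by two the number of cycles of $\mathcal{A}_M\setminus\{s\}$ that can meet $s$, observe that $\mathcal{A}_M\setminus\{C_e,C_f\}$ is a compatible $M'$-alternating set, run the symmetric bound, and in the equality case exclude the $4$-cycle by exhibiting the $M'$-alternating cycle $(C_e\cup C_f)\triangle s$. One caveat: your ``key structural step'' is false as stated. When $s$ is a $4$-cycle whose two shared edges are \emph{not} in $M$, its two boundary edges are both in $M$, and a cycle of $\mathcal{A}_M$ may enclose $s$ and meet it in exactly those two boundary edges --- this is precisely the configuration of Fig.\ \ref{square} used in Lemma \ref{gap1}. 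The consequence you actually need (at most two cycles of $\mathcal{A}_M\setminus\{s\}$ meet $s$, and in the difference-two case they must be single cycles through the two shared edges, forcing $s$ to be a kink) still holds, because inner-disjointness permits at most one enclosing cycle; but the argument has to be split into the kink/non-kink cases as the paper does, rather than derived from the blanket claim.

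The genuine gap is part (iii), and you have flagged it yourself: ``comparing $f$-indices through this correspondence'' is not yet an argument, because $\mathcal{A}_{M'}\cup\{C_e,C_f\}$ need not be a compatible $M$-alternating set --- $\mathcal{A}_{M'}$ may contain a non-face cycle $C'$ that is incompatible with $C_e$ or $C_f$ with respect to $M$, and then no direct reinsertion is possible. The missing idea is the paper's common-region argument. Assuming $\mathcal{A}_{M'}\neq\mathcal{A}':=\mathcal{A}_M\setminus\{C_e,C_f\}$, Lemma \ref{unique} gives $f(\mathcal{A}_{M'})<f(\mathcal{A}')$. If every cycle of $\mathcal{A}_{M'}$ were compatible with $C_e$ and $C_f$, then $\mathcal{A}_{M'}\cup\{C_e,C_f\}$ would be a maximum non-crossing compatible $M$-alternating set with $f$-value $f(\mathcal{A}_{M'})+f(C_e)+f(C_f)<f(\mathcal{A}_M)$, contradicting the minimality of $f(\mathcal{A}_M)$. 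Otherwise some non-face $C'\in\mathcal{A}_{M'}$ is incompatible with, say, $C_e$; since $G$ is an outerplane chain, the interiors of $C'$ and $C_e$ share a region whose boundary $Q'$ is an $M$-alternating cycle with $f(Q')<f(C_e)$ that is compatible (and non-crossing) with every cycle of $\mathcal{A}_M\setminus\{C_e\}$ by Lemma \ref{minindex}, so $(\mathcal{A}_M\setminus\{C_e\})\cup\{Q'\}$ again contradicts the minimality of $f(\mathcal{A}_M)$. Without this step the uniqueness claim $\mathcal{A}_{M'}=\mathcal{A}_M\setminus\{C_e,C_f\}$ is not established, and (iii) is exactly the part of the lemma that Lemma \ref{gap2} later relies on.
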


\begin{proof}
By Lemma \ref{minindex}, $s\in \mathcal{A}_{M}\cap \mathcal{A}_{M^\prime}$.
There are two cases to be considered.

\textbf{Case 1.}
$s$ is not a kink of $G$.
Since $s$ is both $M$ and $M'$-alternating,
$s$ has at most one  shared edge  in
 $M$.
Then there is at most one possible
cycle $C\in\mathcal{A}_M\setminus\{s\}$
which passes through some edge of $s$, and
$\mathcal{A}_M\setminus\{C\}$ also is
a compatible $M^\prime$-alternating set
since $M$ and $M^\prime$  differ only on $s$.
So $|\mathcal{A}_M|-1\leq|\mathcal{A}_{M^\prime}|$.
Similarly, $|\mathcal{A}_{M^\prime}|-1\leq|\mathcal{A}_{M}|$.
Therefore  $||\mathcal{A}_M|-|\mathcal{A}_{M^\prime}||\leq1$.

\textbf{Case 2.}
$s$ is a kink of $G$.
Let $e$ and $f$ be the two shared edges of $s$. Then $\{e,f\}\subset M$ or $M^\prime$,
say $M$. There is at most one cycle $C\in\mathcal{A}_{M^\prime}\setminus\{s\}$
such that $s$ is in the interior of $C$, so $\mathcal{A}_{M^\prime}\setminus\{C\}$
is a compatible $M$-alternating set. Hence $|\mathcal{A}_{M^\prime}|-1\leq |\mathcal{A}_{M}|$.

On the other hand,  there is at most two possible cycles
$C_e$ and $C_f$ in  $\mathcal{A}_M\setminus\{s\}$
that pass $e$ and $f$ respectively. Then
$\mathcal{A}_M\setminus\{C_e,C_f\}$  is a
compatible $M^\prime$-alternating set.
So $|\mathcal{A}_M|-2\leq |\mathcal{A}_{M^\prime}|$. So the first part of the lemma holds.

From now on suppose  $|\mathcal{A}_M|-|\mathcal{A}_{M^\prime}|= 2$. Then members $C_e$ and $C_f$ in  $\mathcal{A}_M\setminus\{s\}$ must exist and $\mathcal{A}':=\mathcal{A}_M\setminus\{C_e,C_f\}$ is a maximum compatible $M^\prime$-alternating set. This implies that $\mathcal{A}'$ has no a cycle containing  $s$ in its interior. We have that $s$ is not 4-cycle. Otherwise, $C：=(C_e\cup C_f)\triangle s$ is an $M'$-alternating cycle that is compatible with each cycle in $\mathcal{A}'$, contradicting that $\mathcal{A}'$ is maximum. So statements  (i) and (ii) hold.

The remaining is to prove that $\mathcal{A}_{M'}=\mathcal{A}'$.
Suppose $\mathcal{A}_{M^\prime}\neq\mathcal{A}_M\setminus\{C_e,C_f\}$.
By Lemma \ref{unique},
$f(\mathcal{A}_{M^\prime})<f(\mathcal{A}_M\setminus\{C_e,C_f\})$.
So $\mathcal{A}_{M^\prime}$
contains a non-face cycle $C^\prime$
such that $C^\prime$ is incompatible with
$C_e$ or $C_f$ with respect to $M$, say $C_e$. Otherwise $\mathcal{A}_{M^\prime}\cup\{C_e,C_f\}$
can be a maximum non-crossing  compatible $M$-alternating set
with $f(\mathcal{A}_{M^\prime}\cup\{C_e,C_f\})<f(\mathcal{A}_M)$,
a contradiction.
Since $G$ is a chain,
the interiors of $C^\prime$ and $C_e$ must have a common region.
Let $Q^\prime$ be the boundary of the common area.
Then $Q^\prime$ is an $M$-alternating cycle.
Note that $Q^\prime$ is compatible with
any $M$-alternating face cycle,
and $f(Q^\prime)<f(C_e)$.
By Lemma \ref{minindex},
$Q^\prime$ is compatible with each cycle of
$\mathcal{A}_{M}\setminus\{C_e\}$.
Hence $(\mathcal{A}_{M}\setminus\{C_e\})\cup\{Q^\prime\}$
can be a maximum non-crossing compatible $M$-alternating
set with $f((\mathcal{A}_{M}\setminus\{C_e\})\cup\{Q^\prime\})<f(\mathcal{A}_{M})$,
a contradiction.
\end{proof}

\begin{figure}[h]
  \centering
    \includegraphics[width=60mm]{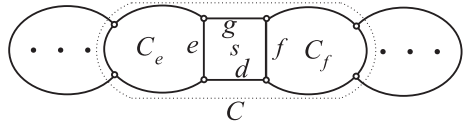}
       \caption{Illustration for the proof of  Lemma \ref{gap1},
       $s$ is a square.}
  \label{square}
\end{figure}
\begin{lemma}\label{gap1}{\bf .}
If $\mathcal{A}_M$ contains a non-face cycle $C$,
then there is an $M$-alternating 4-cycle $s$ in the interior
of $C$
such that  $af(G,M')=af(G,M)+1$, where  $M^\prime=M\triangle s$.
\end{lemma}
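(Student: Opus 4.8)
The plan is to exploit the chain structure of the interior of $C$ together with the minimality built into $\mathcal{A}_M$. Since $G$ is a chain and $C$ is a non-face $M$-alternating cycle in $\mathcal{A}_M$, its interior is a block of consecutive face cycles $s_p,s_{p+1},\dots,s_q$ with $q>p$, and $C$ is the boundary of their union. The first fact I would record is that every shared edge lying strictly inside $C$ is missing from $M$: such an edge is a chord of $C$ with both endpoints on $C$, and because $C$ is $M$-alternating each of its vertices is already matched by an edge of $C$, so no chord can lie in $M$. Next, by Lemma \ref{resonantface} there is an $M$-alternating (resonant) face $s$ in the interior of $C$; by Lemma \ref{minindex} every $M$-alternating face belongs to $\mathcal{A}_M$, so $s\in\mathcal{A}_M$ and is therefore compatible with $C$.

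The heart of the argument is to show that $s$ is a $4$-cycle. I would partition the edges of $s$ into those lying on $C$ and those lying strictly inside $C$. Compatibility of $s$ with $C$ forces every edge of $s\cap C$ into $M$, while the previous observation forces every interior edge of $s$ out of $M$; since $s$ is $M$-alternating, these two classes must alternate around $s$ and hence have equal size. The interior edges of $s$ are exactly its shared edges with neighbouring faces of the run, and the chain hypothesis bounds their number by two. A face at the boundary of the run would have at most one such interior chord, forcing $|s|\le 2$, which is impossible; so $s$ is an interior face of the run with exactly two interior chords $e_L,e_R\notin M$ and two boundary edges in $M$, i.e.\ $|s|=4$. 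In particular both neighbours of $s$ in the run lie inside $C$, so the two arcs below are nonempty.

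Finally I would flip $s$, setting $M'=M\triangle s$, and note $M\triangle M'=s$ is a $4$-cycle. The upper bound $af(G,M')\le af(G,M)+1$ is then immediate from Lemma \ref{gap} applied with the roles of $M,M'$ exchanged, because the gap-$2$ case there requires $s$ not to be a $4$-cycle. For the matching lower bound I would split $C$ along $s$: the symmetric difference $C\triangle s$ consists of two disjoint cycles $C_L$ and $C_R$ (the left part of $C$ closed up by $e_L$ and the right part closed up by $e_R$), each enclosing a nonempty sub-run. Using that the flip alters $M$ only on the four edges of $s$, I would check that $C_L,C_R$ are $M'$-alternating (at the four corners of $s$ the matching simply switches from the boundary edges to $e_L,e_R$) and that $\mathcal{A}':=(\mathcal{A}_M\setminus\{C\})\cup\{C_L,C_R\}$ is a compatible $M'$-alternating set: every member of $\mathcal{A}_M$ other than $C$ and $s$ is disjoint from the edges of $s$, hence still $M'$-alternating, and meets $C_L,C_R$ only in edges of $M$, while $C_L,C_R$ share with $s$ only the single $M'$-edges $e_L,e_R$. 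Since $C_L$ (resp.\ $C_R$) shares the non-$M$ edge $e_L$ (resp.\ $e_R$) with $s\in\mathcal{A}_M$, neither can itself lie in $\mathcal{A}_M$, so no duplication occurs and $|\mathcal{A}'|=|\mathcal{A}_M|+1=af(G,M)+1$. By Theorem \ref{compatibleset} this gives $af(G,M')\ge af(G,M)+1$, and with the upper bound we obtain equality.

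The main obstacle I anticipate is the middle step: pinning down that the resonant face must be a $4$-cycle, and then doing the corner-by-corner and compatibility bookkeeping for $C_L,C_R$ carefully enough to be certain the new set has exactly one more cycle. The chord-parity observation --- that $M$-alternation of $C$ kills every interior chord --- is what makes both the $4$-cycle claim and the verification that $C_L,C_R$ are $M'$-alternating go through; everything else reduces to local checking.
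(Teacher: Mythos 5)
Your argument is correct and follows the paper's own proof essentially step for step: Lemma \ref{resonantface} and Lemma \ref{minindex} produce a resonant face $s\in\mathcal{A}_M$ inside $C$, the chain structure forces $s$ to be a $4$-cycle, Lemma \ref{gap} gives the upper bound, and replacing $C$ by the two cycles of $C\triangle s$ yields a compatible $M'$-alternating set of size $|\mathcal{A}_M|+1$ (the paper's $C_e$ and $C_f$ are your $C_L$ and $C_R$). Your chord-parity argument simply fills in the detail behind the paper's one-line assertion that ``since $G$ is a chain, $s$ must be a $4$-cycle,'' and your compatibility bookkeeping makes explicit what the paper leaves implicit.
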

\begin{proof}
By Lemma \ref{resonantface},
there is an $M$-alternating face cycle $s$
in the interior of $C$.
By Lemma \ref{minindex}, $s\in \mathcal{A}_M$.
So $s$ and $C$ are compatible.
Since $G$ is a chain,
$s$ must be a 4-cycle.
By Lemma \ref{gap},
$||\mathcal{A}_{M^\prime}|-|\mathcal{A}_M||\leq1$.
Let $g$ and $d$ (resp. $e$
and $f$) be the  boundary (resp. shared) edges of $G$ in $s$.
Then $g,d \in M$ and $e,f \in M^\prime$.
See Fig. \ref{square},
$C-g-d+e+f$ is the disjoint union of two
cycles $C_e$ and $C_f$ which pass through $e$ and $f$ respectively.
Since $M$ and $M^\prime$ differ only
on $s$, $C_e$ and $C_f$ both are $M^\prime$-alternating,
and $(\mathcal{A}_M\setminus\{C\})\cup\{C_e,C_f\}$
is a compatible $M^\prime$-alternating set.
Hence $|\mathcal{A}_M|+1\le
 |\mathcal{A}_{M^\prime}|$,
which implies $|\mathcal{A}_M|+1=|\mathcal{A}_{M^\prime}|$.
By Theorem \ref{compatibleset},
$af(G,M^\prime)=af(G,M)+1$.
\end{proof}

According to Lemma \ref{gap1}, the following result is immediate.

\begin{corollary}\label{max}{\bf .}
For a perfect matching $M$ of  $G$ with $af(G,M)=Af(G)$,
 $af(G,M)$ equals the number of $M$-resonant faces.
\end{corollary}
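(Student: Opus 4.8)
The plan is to read the corollary off directly from the maximality of $af(G,M)$ together with Lemma \ref{gap1}. I would begin by fixing a perfect matching $M$ with $af(G,M)=Af(G)$ and passing to the distinguished set $\mathcal{A}_M$ supplied by Lemma \ref{unique}: the unique maximum non-crossing compatible $M$-alternating set whose index $f(\mathcal{A}_M)$ is minimum. By Theorem \ref{compatibleset} together with Lemma \ref{nonc}, a maximum non-crossing compatible set has the same cardinality as a maximum compatible set, so $|\mathcal{A}_M|=c'(M)=af(G,M)$. It therefore suffices to identify exactly which cycles the set $\mathcal{A}_M$ contains.

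The key step is to show that $\mathcal{A}_M$ consists entirely of face cycles. Suppose instead that $\mathcal{A}_M$ contained a non-face cycle $C$. Then Lemma \ref{gap1} would produce an $M$-alternating $4$-cycle $s$ in the interior of $C$ for which the perfect matching $M'=M\triangle s$ satisfies $af(G,M')=af(G,M)+1$. This yields a perfect matching with anti-forcing number strictly larger than $af(G,M)=Af(G)$, contradicting the definition of $Af(G)$ as the largest anti-forcing number over all perfect matchings of $G$. Hence every cycle in $\mathcal{A}_M$ is a face cycle, i.e. the boundary of an interior face, and, being a member of $\mathcal{A}_M$, it is $M$-alternating; thus each such face is $M$-resonant.

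To finish, I would combine this with the first assertion of Lemma \ref{minindex}, which states that $\mathcal{A}_M$ contains every $M$-alternating face cycle of $G$. The preceding paragraph gives the reverse inclusion, so $\mathcal{A}_M$ is precisely the collection of $M$-alternating face cycles, equivalently the collection of $M$-resonant faces. Consequently the number of $M$-resonant faces equals $|\mathcal{A}_M|=af(G,M)$, which is the claim.

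I do not anticipate a genuine obstacle, since the result is flagged as immediate; the only points demanding care are the bookkeeping that $|\mathcal{A}_M|$ truly equals $af(G,M)$ (through Theorem \ref{compatibleset} and Lemma \ref{nonc}) and the identification of an $M$-alternating face cycle with an $M$-resonant interior face, so that the counts on the two sides of the asserted equality refer to the same objects. I would also record that \emph{face} here means interior face, in keeping with the convention that the members of $\mathcal{A}_M$ are boundaries of interior faces.
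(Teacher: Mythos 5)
Your proposal is correct and follows exactly the route the paper intends: the paper derives the corollary as "immediate" from Lemma \ref{gap1} by the same contradiction (a non-face cycle in $\mathcal{A}_M$ would yield a perfect matching with anti-forcing number exceeding $Af(G)$), combined with Lemma \ref{minindex} and the identity $|\mathcal{A}_M|=af(G,M)$. Your write-up simply makes the implicit bookkeeping explicit.
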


\begin{figure}[h]
  \centering
    \includegraphics[width=90mm]{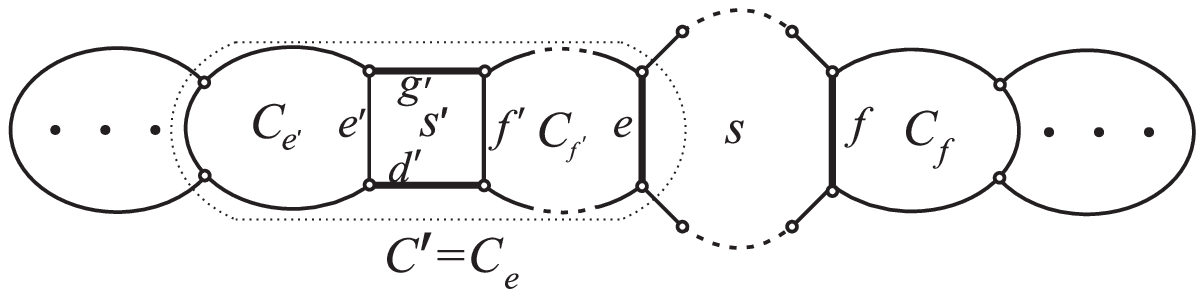}
       \caption{Illustration for the proof of Lemma \ref{gap2},
       $s^\prime$ is a square.}
  \label{square1}
\end{figure}

\begin{lemma}\label{gap2}{\bf .}
Let $M$ and $M^\prime$ be perfect matchings
of  $G$
such that $s=M\triangle M^\prime$ a face cycle in $G$.
If $af(G,M)-af(G,M^\prime)=2$,
then there is a perfect matching $M^{\prime\prime}$
in $G$ such that $af(G,M^{\prime\prime})=af(G,M^\prime)+1$.
\end{lemma}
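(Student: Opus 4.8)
The plan is to realize $M''$ by reversing a single alternating cycle and to track exactly how the canonical maximum set $\mathcal{A}$ (from Lemma~\ref{unique}) changes under that reversal. First I would feed the hypothesis into Lemma~\ref{gap}: since $af(G,M)-af(G,M')=2$, the face $s$ is a kink that is not a $4$-cycle, its two shared edges $e,f$ may be assumed to lie in $M$, the set $\mathcal{A}_M\setminus\{s\}$ contains exactly two cycles $C_e,C_f$ meeting $e$ and $f$ respectively, and $\mathcal{A}_{M'}=\mathcal{A}_M\setminus\{C_e,C_f\}$ with $s\in\mathcal{A}_{M'}$. Because $G$ is a chain and $C_e,C_f$ are compatible with $s$, meeting it only in $e,f\in M$, the cycles $C_e$ and $C_f$ sit on opposite sides of the kink and are disjoint. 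The target value is $af(G,M'')=af(G,M')+1=af(G,M)-1$.

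The candidate is $M''=M\triangle C_e$, which I would read through the cycle $D_e:=s\triangle C_e$; note $M''=M'\triangle D_e$. A short local check gives that $D_e$ is an alternating cycle for both $M'$ and $M''$: since $C_e\cap s=\{e\}$ with $e\in M$, deleting $e$ and running once along $s-e$ and once along $C_e-e$ restores alternation at the two ends of $e$, so $D_e$ is a non-face alternating cycle enclosing $s$ together with the interior of $C_e$. Thus reversing $C_e$ merges the two compatible members $s,C_e$ of $\mathcal{A}_M$ into the single cycle $D_e$, while $C_f$, being disjoint from $C_e$, stays alternating. This is what should make the anti-forcing number fall by exactly one. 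Concretely I would prove $af(G,M'')\ge af(G,M)-1$ by assembling a compatible $M''$-alternating set of that size from $\{D_e,C_f\}$ together with the members of $\mathcal{A}_M$ disjoint from $C_e$, then passing to its non-crossing, minimum-$f$ representative via Lemma~\ref{nonc} and Theorem~\ref{compatibleset}; the reverse inequality would come from Lemma~\ref{gap} applied to the pair $M,M''$ when $C_e$ turns out to be a face, or from the spanning-tree argument of Theorem~\ref{maxantiforcing} adapted to $M''$.

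The hard part is precisely the members of $\mathcal{A}_M$ that share an edge with $C_e$: reversing $C_e$ destroys their alternation, so a priori the count could drop by two rather than one, and I expect to spend most of the effort showing that a compensating cycle always reappears. I would organize this as a dichotomy. If $\mathcal{A}_{M'}$ contains a non-face cycle, I would abandon the explicit flip and instead apply Lemma~\ref{gap1} directly to $M'$: reversing the guaranteed $M'$-alternating $4$-cycle $s'$ in that cycle's interior produces a matching $M''$ with $af(G,M'')=af(G,M')+1$, which is exactly the desired value. The delicate case is when $\mathcal{A}_{M'}$ consists only of face cycles while $C_e$ is still a non-face cycle; here Lemma~\ref{resonantface} together with the chain condition (which forces the inner resonant face to be a $4$-cycle, since only a square can have both of its non-$M$ edges shared) furnishes a resonant square in the interior of $C_e$, and this square is the $s'$ drawn in Fig.~\ref{square1}. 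Reconciling the two descriptions of $M''$ in that situation, i.e. verifying that the square $s'$ supplies precisely the one cycle needed to offset the breakage along $C_e$ so that the net change is $+1$ and not $0$ or $+2$, is the main obstacle I anticipate.
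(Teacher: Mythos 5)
Your reduction via Lemma \ref{gap} and your first two cases do match the paper: when $\mathcal{A}_{M'}$ contains a non-face cycle you may invoke Lemma \ref{gap1} directly on $M'$, and when $C_e$ is a non-face cycle the resonant $4$-cycle $s'$ in its interior yields $M''=M'\triangle s'$ exactly as in Fig. \ref{square1}. But your dichotomy is not exhaustive: it omits the case in which \emph{every} member of $\mathcal{A}_M$ --- in particular $C_e$ and $C_f$ --- is a face cycle. That is not a fringe case; it occupies most of the paper's proof.

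In that omitted case your primary candidate $M''=M\triangle C_e$ can fail. Take $G$ to be an all-kink chain of five hexagons $C_1,C_2,C_e,s,C_f$ and let $M$ make every hexagon resonant, so $af(G,M)=5$, $M'=M\triangle s$ and $af(G,M')=3$; the hypotheses of the lemma hold. Flipping $C_e$ removes \emph{both} shared edges of $C_e$ from the matching, so both neighbours $C_2$ and $s$ lose resonance, and only $s$ is compensated by $D_e=s\triangle C_e$ (which is itself incompatible with $C_e$). One checks $af(G,M\triangle C_e)=3$, not the required $4$; correspondingly, your set $\{D_e,C_f\}$ together with the members of $\mathcal{A}_M$ disjoint from $C_e$ is $\{D_e,C_f,C_1\}$, one cycle short, and no compensating cycle for $C_2$ reappears. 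The paper handles this case by an entirely different, iterative construction: it forms the subgraph $G_M$ of all $M$-resonant face cycles, takes the \emph{left terminal} face $s_0$ of the leftmost non-single component (in the example above $C_1$, whose flip destroys only one resonance), and, when the flip of $s_0$ creates a new compatible cycle $D_1$, either finishes via Lemma \ref{gap1} (if $D_1$ is a non-face cycle) or moves one face further left and repeats, terminating because $G$ is finite. Without this argument, or a replacement for it, the lemma is not proved.
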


\begin{proof}
By Lemma \ref{gap},
$s$ is a kink of length at least 6 and has two shared edges $e$ and $f$
in $M$,  $\mathcal{A}_M\setminus\{s\}$
contains exactly two cycles $C_e$ and $C_f$  passing through
$e$ and $f$ respectively, and
$\mathcal{A}_{M^\prime}=\mathcal{A}_M\setminus\{C_e,C_f\}$.

Suppose that $\mathcal{A}_M$ contains a non-face cycle $C^\prime$. By Lemma \ref{gap1},
there is an $M$-alternating face cycle $s^\prime$ of length 4 in the
interior of $C^\prime$. Note that $s'$ is disjoint with $s$ and  thus $M'$-alternating.
By Lemma \ref{minindex},
$s,s^\prime\in\mathcal{A}_M\cap\mathcal{A}_{M^\prime}$.
If $C^\prime$ also belongs to $\mathcal{A}_{M^\prime}$,
by Lemma  \ref{gap1},
then $af(G,M^{\prime\prime})=af(G,M^{\prime})+1$, where $M^{\prime\prime}=M^{\prime}\triangle s^{\prime}$.
For the case $C^\prime\notin \mathcal{A}_{M^\prime}$,
$C^\prime$ is either $C_e$ or $C_f$,
say $C^\prime=C_e$.
Let $g^\prime$ and $d^\prime$
(resp. $e^\prime$ and $f^\prime$)
be the two boundary (resp. shared) edges  of $s^\prime$.
Then $C^{\prime}-g^{\prime}-d^{\prime}+e^{\prime}+f^{\prime}$
is the disjoint union of two cycles $C_{e^\prime}$ and $C_{f^\prime}$
such that  $e^\prime\in C_{e'}$ and $ f^\prime,e\in C_{f'}$ （see Fig. \ref{square1}).
$C_{e^\prime}$ is  $M^{\prime\prime}$-alternating,
so $\mathcal{A}_{M^\prime}\cup\{C_{e^\prime}\}$ is a compatible
$M^{\prime\prime}$-alternating set.
Hence $|\mathcal{A}_{M^\prime}|+1
\leq |\mathcal{A}_{M^{\prime\prime}}|$.
On the other hand,
by Lemma \ref{gap}, $af(G,M^{\prime\prime})\le af(G,M^\prime)+1$.
By Theorem \ref{compatibleset},
$af(G,M^{\prime\prime})=af(G,M^\prime)+1$.

From now on suppose all  members of $\mathcal{A}_M$
are face cycles.
So $C_e$ and $C_f$ both are face cycles and
the number of $M$-resonant faces of $G$ is $|\mathcal{A}_M|$.
Let $G_M$ be the graph formed by all $M$-alternating  face cycles of $G$.
Then each component of $G_M$
is an all-kink subchain.
A component of $G_M$ is called a \emph{single component}
if it is a single face cycle, is \emph{non-single component} otherwise.
It is obvious that $G_M$ has a component containing three face cycles  $C_e,s$ and $C_f$.
So $G_M$ has at least one non-single component.

For convenience, we define an orientation of chain $G$ from left to right whenever we go along it from $C_e,s$ to $C_f$.
Let $L_0$ be the leftmost non-single component
of $G_M$ in $G$. Let $s_0$ be the left terminal face cycle in $L_0$,
and $s_0^-$ and $s_0^+$ the left and right neighboring
face cycles of $s_0$ in $G$ respectively. It is possible that $s_0^+=s$.
Let $M_1=M\triangle s_0$. By the first part of Lemma \ref{minindex},  $\mathcal{A}_M\setminus\{s_0^+\}\subseteq \mathcal{A}_{M_1}$. So  $|\mathcal{A}_M|-1\leq|\mathcal{A}_{M_1}|$.

If $s_0$ is the left terminal face cycle in $G$
(i.e., $s_0^-$ does not exist),
then  $|\mathcal{A}_{M_1}|=|\mathcal{A}_{M}|-1$.
Letting $M^{\prime\prime}=M_1$,  $af(G,M^{\prime\prime})=af(G,M^\prime)+1$.
So  suppose that $s_0^-$ exists.
Let $e_0$
and $f_0$
be the left and right shared edges of $s_0$ respectively.
Then $f_0\in M$.

Let $e_0\in M$.  If $\mathcal{A}_{M_1}$  contain  a cycle $C_0$
such that $s_0$ is in its interior, then $s_0$ must be a 4-cycle. Note that $M=M_1\triangle s_0$.
By Lemma \ref{gap1}, $af(G,M)=af(G,M_1)+1$.
Otherwise  $\mathcal{A}_M\setminus\{s_0^+\}= \mathcal{A}_{M_1}$.
Then $af(G,M_1)=af(G,M)-1=af(G,M^\prime)+1$.

So suppose that  $e_0\notin M$. Then $e_0\in M_1$ and $f_0\notin M_1$. If $\mathcal{A}_M\setminus\{s_0^+\}= \mathcal{A}_{M_1}$, then $|\mathcal{A}_{M_1}|=|\mathcal{A}_M|-1$ and the result holds. Otherwise, $ \mathcal{A}_{M_1}$ must contain just one $M_1$-alternating cycle $D_1$ which is compatible with each cycle of $\mathcal{A}_M\setminus\{s_0^+\}$. So $D_1$ passes $e_0$ and contains $s_0^-$ in its interior. So
$|\mathcal{A}_{M_1}|=|\mathcal{A}_M|$.
If $D_1$ is a non-face cycle,
by Lemma \ref{gap1},
there is a 4-cycle $s_0^\prime$ in the interior of  $D_1$
such that $M_1\triangle s_0^\prime$ is a perfect matching of $G$ with
$|\mathcal{A}_{M_1\triangle s_0^\prime}|=|\mathcal{A}_{M_1}|+1$.
It implies that  $|\mathcal{A}_{M^\prime\triangle s_0^\prime}|=|\mathcal{A}_{M^\prime}|+1$.
Let $M^{\prime\prime}=M^\prime\triangle s_0^\prime$.
Then $af(G,M^{\prime\prime})=af(G,M^\prime)+1$.
Now suppose $D_1$ is an $M_1$-alternating face cycle.
Then $D_1=s_0^-$, and each cycle of $\mathcal{A}_{M_1}$ is a face cycle.
It clear that there is a non-single component in
$G_{M_1}$ since $s_0$ and $s_0^-$ are adjacent $M_1$-alternating face cycles.
Let $L_1$ be the most left non-single component of $G_{M_1}$ in $G$,
and $s_1$ the left terminal face cycle of $L_1$.
Then  $s_1$ lies on the left side of $s_0$ in $G$.
Repeating the above procedure, finally
we can  find a perfect matching $M^{\prime\prime}$ in $G$
such that $af(G,M^{\prime\prime})=af(G,M^\prime)+1$ since
$G$ has a finite number of face cycles.
\end{proof}

\begin{theorem}\label{continuous}{\bf .}
The anti-forcing spectrum of an even polygonal chain $G$ is continuous.
\end{theorem}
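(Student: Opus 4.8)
The plan is to show that $\text{Spec}_{af}(G)$ is the full integer interval $[af(G),Af(G)]$; that is, that every integer $n$ with $af(G)\le n\le Af(G)$ equals $af(G,M)$ for some perfect matching $M$ of $G$. The backbone of the argument is the connectivity of the $Z$-transformation graph together with the bounded-jump estimate of Lemma \ref{gap}.

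First I would fix a perfect matching $M_0$ with $af(G,M_0)=af(G)$ and a perfect matching $M_t$ with $af(G,M_t)=Af(G)$. Since an even polygonal chain is plane elementary bipartite (by Theorem \ref{face}), Lemma \ref{z} guarantees that $Z(G)$ is connected, so there is a path $M_0,M_1,\ldots,M_t$ in $Z(G)$; by the definition of $Z(G)$, consecutive matchings $M_j$ and $M_{j+1}$ differ by exactly one face cycle $s_j=M_j\triangle M_{j+1}$. Along this path Lemma \ref{gap} gives $|af(G,M_{j})-af(G,M_{j+1})|\le 2$, and moreover $af(G,M_0)$ and $af(G,M_t)$ are the global minimum and maximum of $af(G,\cdot)$ over all perfect matchings, so the whole sequence of values lies in $[af(G),Af(G)]$.

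Next I would run a discrete intermediate-value argument on the integer sequence $a_j:=af(G,M_j)$. Given a target $n\in[af(G),Af(G)]$, note that $a_0\le n\le a_t$, so there is a first index $j$ with $a_j\ge n$. If $a_j=n$ we are done immediately. Otherwise $a_j>n$, forcing $j\ge 1$, and minimality of $j$ gives $a_{j-1}\le n-1<n<a_j$; combined with $a_j-a_{j-1}\le 2$ this pins down $a_{j-1}=n-1$ and $a_j=n+1$. Thus the only way an integer can be skipped is a jump of exactly $2$ across a single face-cycle move, with $af(G,M_{j})-af(G,M_{j-1})=2$ and $s_{j-1}=M_{j-1}\triangle M_j$ a face cycle.

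This is precisely the situation covered by Lemma \ref{gap2}: applying it with $M=M_j$ and $M'=M_{j-1}$ (so that $af(G,M)-af(G,M')=2$ and $M\triangle M'$ is a face cycle) yields a perfect matching $M''$ with $af(G,M'')=af(G,M')+1=n$. Hence the missing value $n$ is realized, and every integer of $[af(G),Af(G)]$ occurs in the spectrum, proving continuity. The only real obstacle is the possibility of a $2$-jump skipping an integer; everything hinges on Lemma \ref{gap2} being able to manufacture the intermediate matching, which is why that lemma (rather than mere connectivity of $Z(G)$) is the heart of the matter.
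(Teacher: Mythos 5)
Your proposal is correct and follows essentially the same route as the paper: connectivity of $Z(G)$ (Lemma \ref{z}) gives a path of perfect matchings from one realizing $af(G)$ to one realizing $Af(G)$, Lemma \ref{gap} bounds each step by $2$, and Lemma \ref{gap2} supplies the intermediate matching whenever a step of size exactly $2$ occurs. Your explicit discrete intermediate-value bookkeeping is just a more detailed write-up of the paper's final sentence; no new ideas or gaps are involved.
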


\begin{proof}
Let $M$ and $M^\prime$ be two perfect matchings in $G$
such that  $af(G,M)=af(G)$ and $af(G,M^\prime)=Af(G)$.
By Lemma \ref{z},
there is a path $M_1(=M)M_2\cdots M_n(=M^\prime)$ in $Z(G)$.
By Lemma \ref{gap}, $|af(G,M_i)-af(G,M_{i+1})|\leq2$ for all $1\leq i<n$.
If $|af(G,M_i)-af(G,M_{i+1})|=2$,
by Lemma \ref{gap2},
there is a perfect matching $M^{\prime\prime}$ in $G$
such that  $|af(G,M_i)-af(G,M^{\prime\prime})|=1$ and $|af(G,M^{\prime\prime})-af(G,M_{i+1})|=1$.
Therefore, there is no gap in Spec$_{af}(G)$.
\end{proof}

%

\subsection{Minimum anti-forcing number}

\begin{figure}[h]
  \centering
    \includegraphics[width=120mm]{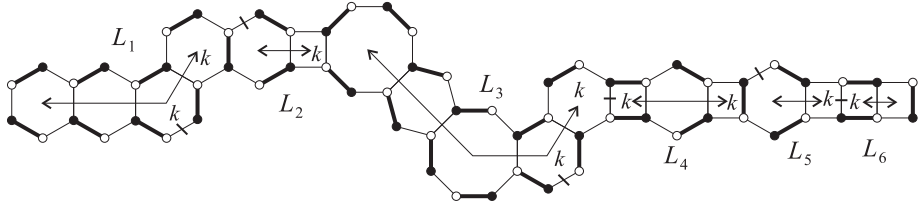}
       \caption{Segment decomposition of $G$, kinks are signed by $k$, anti-forcing edge of each segment is marked by a bar.}
  \label{decomposition}
\end{figure}

Let $G$ be an even polygonal chain.
Label the face cycles of $G$ as $s_1,s_2,\ldots,s_n$ from left to right. If $G$ has  no kink, then itself is a segment.
Let $s_i$ be the first  kink of $G$.
If $s_i$ is a 4-cycle, then the subchain $s_1+s_2+\cdots+s_i$
is called the first segment of $G$.
Otherwise, the subchain $s_1+s_2+\cdots+s_j$ or $s_1+s_2+\cdots+s_n$
is called the first segment of $G$, denoted by $L_1$, according as $G$ has the second kink $s_j$ or no.
Let $G-L_1$ be the subchain
obtained by removing the face cycles of $L_1$ from $G$. Precisely, $G-L_1$ consists of all face cycles of $G$ but not in $L_1$.
If $L_1\not= G$, similarly let $L_2$  be the first segment of $G-L_1$ as the second segment of $G$.
Repeating this procedure,
$G$ can be decomposed into a sequence of segments
($L_1,L_2,\ldots,L_t$), $t\ge 1$,
called the \emph{segment decomposition} of $G$ (see Fig. \ref{decomposition}).
Obviously, each segment $L_i$ has  an anti-forcing edge by Theorem \ref{antiforcingedge}
(see Fig. \ref{decomposition}). Since $G$ is elementary, by Theorem \ref{eardecomposition} or \ref{face} $G$  has at least two perfect matchings, $af(G)\ge 1$.

%
%

\begin{theorem}\label{min}{\bf .}
Let ($L_1,L_2,\ldots,L_t$) be the segment decomposition
of an even polygonal chain $G$.
Then $af(G)=t$.
\end{theorem}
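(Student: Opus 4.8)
The plan is to prove the two inequalities $af(G)\le t$ and $af(G)\ge t$ separately, using Theorem \ref{compatibleset} to phrase everything in terms of compatible $M$-alternating sets: since $af(G)=\min_M af(G,M)=\min_M c'(M)$, the upper bound amounts to exhibiting one perfect matching carrying a small anti-forcing set, while the lower bound amounts to producing, for \emph{every} perfect matching $M$, a compatible $M$-alternating set of size $t$. I would run an induction on the number of segments $t$, removing the first segment $L_1$ and applying the inductive hypothesis to $G':=G-L_1$, whose segment decomposition is $(L_2,\dots,L_t)$ by the recursive definition given above, so $af(G')=t-1$. The base case $t=1$ is immediate: a one-segment chain is elementary by Theorem \ref{face}, hence has at least two perfect matchings and so $af(G)\ge 1$, while it carries an anti-forcing edge by Theorem \ref{antiforcingedge}, giving $af(G)\le 1$.

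For the upper bound I would build a perfect matching $M$ of $G$ that is aligned with the decomposition. Each segment $L_i$ possesses an anti-forcing edge $b_i$ (the barred edges of Fig. \ref{decomposition}), so $L_i$ has a matching under which $b_i$ is anti-forcing; I would glue these segment matchings along the single shared edge of each separating kink, choosing orientations so that the shared edges receive consistent membership, to obtain a global $M$ whose restriction to each $L_i$ is anti-forced by $b_i$. The claim is that $\{b_1,\dots,b_t\}$ is then an anti-forcing set of $M$: by Lemma \ref{pass} every $M$-alternating cycle lying in a single segment $L_i$ must pass through $b_i$, so it suffices to show that no $M$-alternating cycle crosses a separating kink. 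I would deduce this from the kink structure, namely that the two shared edges of a separating kink both lie in $M$ and confine any cycle meeting the kink face to one side of it; Theorem \ref{antiforcingset} then yields $af(G,M)\le t$, hence $af(G)\le t$. Inductively this is just the statement that extending a size-$(t-1)$ anti-forcing set of $G'$ by the single edge $b_1$ gives an anti-forcing set of $G$.

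For the lower bound I must show $c'(M)\ge t$ for every perfect matching $M$. The natural route is to restrict $M$ to $G'=G-L_1$: when the shared edge $f$ of the kink separating $L_1$ lies in $M$, or more generally when both ends of $f$ are matched inside $G'$, the restriction $M':=M\cap E(G')$ is a perfect matching of $G'$, so by induction $G'$ carries a compatible $M'$-alternating set $\mathcal{A}'$ with $|\mathcal{A}'|=t-1$; each member of $\mathcal{A}'$ lives in $G'$ and hence stays $M$-alternating in $G$, and I would adjoin one further $M$-alternating cycle supported in $L_1$ (obtained from an $M$-resonant interior face of $L_1$ via Lemma \ref{resonantface}) which is compatible with all of $\mathcal{A}'$ because it meets $G'$ in at most the shared kink edge. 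This produces a compatible $M$-alternating set of size $t$, and together with the upper bound gives $af(G)=t$.

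The main obstacle is the remaining case where $M$ crosses the first kink, i.e.\ $f\notin M$ and one endpoint of $f$ is matched by an edge of $L_1$ outside $G'$, so that $M\cap E(G')$ fails to be a perfect matching of $G'$ and the induction does not apply verbatim. Here I would either flip $M$ along a short alternating path inside $L_1$ to reach a perfect matching $\widetilde{M}$ that does restrict to $G'$, while controlling the resulting change in $c'$ through Lemmas \ref{gap1} and \ref{gap2}, or argue directly that such a crossing forces a long $M$-alternating cycle threading $L_1$, which then supplies the $t$-th compatible cycle alongside the $t-1$ cycles extracted from the canonical maximum non-crossing set $\mathcal{A}_M$ of Lemmas \ref{minindex} and \ref{unique}. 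Handling this crossing case uniformly, so that exactly one alternating cycle is charged to each segment regardless of how $M$ behaves at the kinks, is the delicate part of the argument.
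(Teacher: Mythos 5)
Your skeleton coincides with the paper's: induction on the number of segments, the same base case via the anti-forcing edge of a single segment, the upper bound by collecting one anti-forcing edge per segment, and a lower bound split into the case where $M$ restricts to a perfect matching of $G-L_1$ and the case where it does not. The first case is handled essentially as you describe (the paper works in $L_1-s_{i_0}$, where $s_{i_0}$ is the separating kink, rather than in all of $L_1$, which sidesteps the compatibility issue at the shared edge that your phrasing glosses over). But the crossing case, which you correctly identify as the delicate part, is left genuinely open in your proposal, and neither of the two strategies you sketch closes it as stated.

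Here is the concrete difficulty. Following the paper, one flips $M$ along the kink $s_{i_0}$ to get $M_1=M\triangle s_{i_0}$, whose restriction to $G-L_1$ is a perfect matching; induction then forces any minimum anti-forcing set of $M_1$ to contain at least $t-1$ edges of $G-L_1$, and $s_{i_0}$ together with an $M_1$-alternating cycle in $L_1-s_{i_0}$ contributes two more, giving $af(G,M_1)\geq t+1$. This suffices only if the flip costs at most $1$, i.e.\ $af(G,M_1)\leq af(G,M)+1$. By Lemma \ref{gap} the cost can be $2$, and in that event you need a \emph{third} compatible $M_1$-alternating cycle inside $L_1$ to recover the inequality $t\leq af(G,M)$. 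Your proposal offers no mechanism for producing it. Lemmas \ref{gap1} and \ref{gap2}, which you invoke, are the wrong tools: they manufacture intermediate matchings for the continuity of the spectrum and give no lower bound on $af(G,M)$ for the original $M$. What the paper actually uses is the structural content of Lemma \ref{gap} itself, namely that a jump of $2$ forces $s_{i_0}$ to be a kink of length at least $6$ and forces $\mathcal{A}_{M_1}\setminus\{s_{i_0}\}$ to contain a cycle $C_{e_1}$ through the shared edge $e_1$ on the $L_1$ side; a further case split (on whether $C_{e_1}$ is a face cycle, using Lemma \ref{resonantface} in one branch and the existence of a second kink of $G$ inside $L_1$ in the other, the latter guaranteed precisely because $s_{i_0}$ is not a $4$-cycle and hence, by the definition of segment, is the second kink encountered) then yields the three required compatible cycles in $L_1$. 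Your alternative suggestion of a single ``long $M$-alternating cycle threading $L_1$'' cannot compensate for a cost-$2$ flip on its own. Without this analysis the lower bound, and hence the theorem, is not established.
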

\begin{proof}
We proceed by induction on $t$.
For the case $t=1$,
$G$ itself is a segment,
so $G$ has an anti-forcing edge,
and  $af(G)=1$.
Suppose $t\geq 2$.
The anti-forcing edges of
all segments $L_i$ ($i=1,2,\ldots,t$)
form an anti-forcing set
of a perfect matching of $G$.
So $af(G)\leq t$.
We only need to prove $ af(G)\ge t$. Let $s_{i_0}$ be the right terminal face cycle of $L_1$.

Let $M$ be a perfect matching of $G$ with $af(G,M)=af(G)$.
Note that ($L_2,L_3,\ldots,L_t$) is the segment decomposition of $G-L_1$.
By the induction hypothesis, we have that $af(G-L_1)=t-1$.
There are two cases to be considered.

\textbf{Case 1.}
The restriction of $M$ to $G-L_1$ is a perfect matching
of $G-L_1$. Since $s_{i_0}$ is a kink of $G$,  the restriction of $M$ to $L_1-s_{i_0}$ is a perfect matching
of $L_1-s_{i_0}$. Let $S$ be any minimum anti-forcing set  of $M$. By Theorem \ref{antiforcingset}, $S\cap E(G-L_1)$ and $S\cap E(L_1-s_{i_0})$ are anti-forcing sets of $G-L_1$ and $L_1-s_{i_0}$ respectively.
Since $af(G-L_1)=t-1$ and $af(L_1-s_{i_0})\geq 1$,  $S$ contains at least $t-1$ edges in $G-L_1$ and at least one edge of $L_1-s_{i_0}$.
So $t\leq|S|$, i.e.,  $t\leq af(G)$.

\textbf{Case 2.}
The restriction of $M$ to $G-L_1$ is not a perfect matching
of $G-L_1$.
Let $e_1$ (resp. $f_1$) be the shared edge between
$L_1-s_{i_0}$ (resp. $G-L_1$) and $s_{i_0}$.
Then $e_1,f_1\notin M$.
Since $s_{i_0}$ is a kink of $G$,
$s_{i_0}$ is $M$-alternating.
Let $M_1=M\triangle s_{i_0}$.
Then $M_1$ is a perfect matching of $G$
and $e_1,f_1\in M_1$.
By Lemma \ref{gap},
$af(G,M_1)\leq af(G,M)+2$.
Note that the restriction of $M_1$ to $G-L_1$ is a perfect matching
of $G-L_1$.
Also,
any minimum anti-forcing set $S_1$ of $M_1$
contains at least $t-1$ edges of $G-L_1$.
On the other hand,
the restriction of $M_1$ to $L_1-s_{i_0}$ also is a perfect matching
of $L_1-s_{i_0}$. $L_1-s_{i_0}$ contains an $M_1$-alternating cycle $C_1$
which is compatible with $s_{i_0}$.
By Theorem \ref{antiforcingset},
$S_1$ must contain at least two edges of $L_1$.
If $af(G,M_1)\leq af(G,M)+1$,
then $t+1\leq|S_1|\leq af(G,M)+1$,
i.e., $t\leq af(G)$.

Suppose that  $af(G,M_1)=af(G,M)+2$. By Lemma \ref{gap},
$s_{i_0}$ is not a 4-cycle, and
$\mathcal{A}_{M_1}\setminus\{s_{i_0}\}$
includes a cycle $C_{e_1}$  in $L_1-s_{i_0}$ passing through $e_1$.
If $C_{e_1}$ is not a face cycle,
by Lemma \ref{resonantface},
then there is an $M_1$-alternating face cycle $s^\prime$
in the interior of $C_{e_1}$.
By Lemma \ref{minindex},
$s'$, $s_{i_0}$ and $C_{e_1}$ all belong to $\mathcal{A}_{M_1}$,
which implies that $S_1$ includes at least three edges of $L_1$.
Consequently, $t+2\leq|S_1|= af(G,M)+2$,
i.e., $t\leq af(G)$.
So we may assume that $C_{e_1}=s_{i_0-1}$ is a face cycle.
Since $s_{i_0}$ is not a 4-cycle,
$L_1$ contains another kink $s_{i}$ of $G$, $2\le i\le i_0-1$. Let $L_1^\prime$ be the subchain consisting of face cycles $s_1,s_2,\ldots, s_{i-1}$.
It follows that the restriction of $M_1$ to $L_1^\prime$
is a perfect matching of $L_1^\prime$.
Also there is an $M_1$-alternating face cycle
$C_1^\prime$ in $L_1^\prime$.
Hence $s_{i_0}$, $s_{i_0-1}$ and $C_1^\prime$ are three compatible
$M_1$-alternating cycles in $L_1$.
Like the above, we have  $t\leq af(G)$.
\end{proof}

Actually,  Theorem \ref{min} gives a  linear algorithm
to compute the anti-forcing number of any even polygonal chain.
For example, the anti-forcing number of the even polygonal chain
in Fig. \ref{decomposition} is 6. As special cases,
the anti-forcing numbers of hexagonal chains and
phenylene chains have been computed in
\cite{Deng1} and \cite{ZhangQQ}, respectively.

Note that each segment of an all-kink
even polygonal chain has at most three face cycles. For a real number $x$, let $\lceil x\rceil$ be the smallest
integer not less than $x$.
By Theorem \ref{min}, we have the following result.

\begin{corollary}\label{errata-}{\bf .}
Let $G$ be an all-kink even polygonal chain without 4-cycles  and with $n$ face cycles.
Then $af(G)=\lceil\frac{n}{3}\rceil$.

\end{corollary}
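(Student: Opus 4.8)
The plan is to combine Theorem \ref{min} with a count of the segments produced by the segment decomposition. By Theorem \ref{min} we have $af(G)=t$, where $t$ is the number of segments in the segment decomposition $(L_1,L_2,\ldots,L_t)$ of $G$; hence it suffices to prove that this decomposition consists of exactly $\lceil n/3\rceil$ segments, each of which (except possibly the last) contains precisely three face cycles.

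First I would identify the kinks. Writing the face cycles as $s_1,s_2,\ldots,s_n$ from left to right, the two terminal faces $s_1$ and $s_n$ each have only one shared edge, so neither is a kink, while every interior face $s_2,\ldots,s_{n-1}$ is a kink by the all-kink hypothesis. The essential point, which I would record explicitly, is that being a kink is a local property of a face cycle together with its two shared edges; deleting faces lying entirely on one side of a given interior face changes neither that face's cycle nor its shared edges, so its kink status is preserved.

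Next I would determine the first segment. For $n\ge 3$ the first kink is $s_2$, and since $G$ has no $4$-cycles, $s_2$ is not a $4$-cycle; the segment-decomposition rule then extends $L_1$ to the second kink $s_3$ when it exists, and to $s_n$ otherwise. Because $s_3$ is interior exactly when $n\ge 4$, both alternatives give $L_1=s_1+s_2+s_3$, so $L_1$ has three face cycles; for $n\in\{1,2\}$ there is no kink and $L_1=G$. I would then argue that $G-L_1$ is again an all-kink even polygonal chain without $4$-cycles having $n-3$ faces: its new leftmost face $s_4$ loses its left shared edge and becomes terminal, while the surviving interior faces keep their kink status by the locality remark above.

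Finally I would run an induction on $n$. The base cases $n\le 3$ give a single segment and $t=1=\lceil n/3\rceil$. For $n\ge 4$, applying the induction hypothesis to $G-L_1$ and prepending $L_1$ yields $t=1+\lceil (n-3)/3\rceil=\lceil n/3\rceil$, and Theorem \ref{min} then gives $af(G)=\lceil n/3\rceil$. The step I expect to require the most care is the structural claim that the segment decomposition of $G$ is precisely $L_1$ followed by that of $G-L_1$, together with the verification that deleting $L_1$ preserves the all-kink-without-$4$-cycles property and correctly re-labels $s_4$ as terminal; once this locality of the kink property is in place, the base cases and the arithmetic identity $1+\lceil (n-3)/3\rceil=\lceil n/3\rceil$ are routine.
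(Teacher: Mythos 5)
Your proof is correct and takes essentially the same route the paper intends: the observation stated just before the corollary that each segment of an all-kink even polygonal chain has at most three face cycles, combined with Theorem~\ref{min}. You simply make explicit what the paper leaves implicit — that the absence of $4$-cycles forces $L_1$ to extend to the second kink $s_3$, so every segment except possibly the last has exactly three face cycles, and the count $t=\lceil n/3\rceil$ follows by induction.
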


\begin{corollary}\label{errata}{\bf .}
Let $G$ be a straight chain of $n$ squares.
Then $af(G)=\lceil\frac{n}{2}\rceil$.
\end{corollary}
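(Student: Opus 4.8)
The plan is to invoke Theorem \ref{min}, which reduces the computation of $af(G)$ to counting the number $t$ of segments in the segment decomposition $(L_1,L_2,\ldots,L_t)$ of $G$. Thus it suffices to show that a straight chain of $n$ squares decomposes into exactly $\lceil n/2\rceil$ segments, and the corollary follows from $af(G)=t$.

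First I would pin down the local structure. Label the squares $s_1,s_2,\ldots,s_n$ from left to right. In a straight chain each internal square $s_i$ $(2\le i\le n-1)$ shares its two \emph{opposite} edges with its two neighbours, and since a $4$-cycle has a perfect matching consisting of two opposite edges, each such $s_i$ is a kink; the two terminal squares $s_1$ and $s_n$ each have a single shared edge and so are not kinks. Hence every face cycle is a $4$-cycle and every internal one is a kink, which is exactly the all-kink property noted earlier for straight chains.

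Next I would run the segment decomposition step by step. For $n\le 2$ there are no kinks, so $G$ is a single segment and $t=1=\lceil n/2\rceil$. For $n\ge 3$ the first kink is $s_2$, and since it is a $4$-cycle, the definition gives the first segment $L_1=s_1+s_2$, consisting of exactly two squares. Removing $L_1$ leaves the straight chain $s_3,\ldots,s_n$, in which $s_3$ is now terminal and $s_4$ (if it exists) becomes the new first kink. Iterating, the $k$-th step peels off the two-square segment $s_{2k-1}+s_{2k}$.

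Finally I would count. If $n$ is even the process yields $n/2$ two-square segments; if $n$ is odd it yields $(n-1)/2$ two-square segments followed by the leftover square $s_n$, which forms one further kink-free segment, giving $(n+1)/2$ in total. In either case $t=\lceil n/2\rceil$, and Theorem \ref{min} gives $af(G)=\lceil n/2\rceil$. The only point demanding care is the bookkeeping in the iteration: one must verify that deleting a two-square segment turns the next square into a terminal one, so that the ``first kink'' of the truncated chain is genuinely the second square of the following pair. This is routine, but it is precisely where an off-by-one error could arise, and it is the step I would check most carefully.
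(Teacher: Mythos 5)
Your proposal is correct and follows the same route as the paper: both apply Theorem \ref{min} after observing that in a straight chain every internal square is a (4-cycle) kink, so the segment decomposition peels off two squares at a time and yields $t=\lceil n/2\rceil$ segments. The bookkeeping you flag (that deleting a segment makes the next square terminal, hence not a kink of the truncated chain) is exactly right and follows from the paper's definition of $G-L_1$ as the subchain on the remaining face cycles.
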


For any hexagonal chain $H$, Theorem 5 in \cite{VT2} stated that
$af(H)=\lceil\frac{k(H)}{2}\rceil$, where
$k(H)$ is the number of maximal linear chains of $H$. In fact this is not correct in general. For example,
for an all-kink hexagonal chain $L$ with $n\geq 2$ hexagons, from Corollary \ref{errata-} we have $af(L)=\lceil\frac{n}{3}\rceil$. But $k(L)=n-1$, and $\lceil\frac{n-1}{2}\rceil>\lceil\frac{n}{3}\rceil$
for all $n\geq8$.


\subsection{Maximum anti-forcing number}

\begin{figure}[h]
  \centering
    \includegraphics[width=80mm]{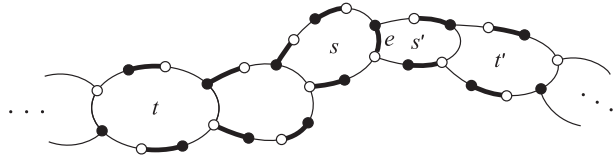}
       \caption{A maximal linear chain with terminals $t$ and $t'$.}
  \label{attwo}
\end{figure}

For an  even polygonal chain $G$, from Lemma \ref{cha} we can see that $Af(G)=r(G)$ if and only if $G$ is all-kink. Here the cyclomatic number $r(G)$ equals the number of interior faces of $G$.  Our approach to compute  $Af(G)$ is to decompose $G$ into some all-kink subchains that contain face cycles as many as possible.

\begin{lemma}\label{two}{\bf .}
Let $M$ be a perfect matching of  $G$ and
$B$ a maximal linear chain of $G$.
Then $B$ contains at most two $M$-alternating face cycles.
Further, if $B$ contains exactly two $M$-alternating face cycles,
then they are neighboring.
\end{lemma}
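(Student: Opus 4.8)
The plan is to pin down exactly which face cycles of the maximal linear chain can be $M$-alternating. Write $B=s_1+s_2+\cdots+s_m$ with the face cycles listed consecutively, and let $e_i$ be the shared edge of $s_i$ and $s_{i+1}$ for $1\le i\le m-1$. The one property I will use throughout is the defining feature of a linear chain: each internal face cycle $s_i$ is a non-kink, which means its two shared edges $e_{i-1},e_i$ cannot both lie in a perfect matching of $s_i$; on the even cycle $s_i$ this is the same as saying $e_{i-1}$ and $e_i$ have \emph{opposite parity} (a perfect matching of an even cycle selects all edges of one parity class). Two elementary consequences orient the argument: (a) if an internal $s_i$ is $M$-alternating then $M\cap E(s_i)$ is a perfect matching of $s_i$, so by opposite parity exactly one of $e_{i-1},e_i$ lies in $M$; and (b) if neighbouring $s_i,s_{i+1}$ are both $M$-alternating then the $M$-edge at each endpoint of $e_i$ must lie in both cycles, hence equals $e_i$, so $e_i\in M$. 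Together (a) and (b) already forbid three consecutive $M$-alternating cycles, but the real work is a forcing argument.

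The engine is a cascade lemma exploiting the non-kink parity. Suppose $s_i$ is $M$-alternating with $e_i\notin M$. Then the two (adjacent) endpoints of $e_i$ are matched by $M$ inside $s_i$, hence \emph{outside} $s_{i+1}$. Deleting them from the even cycle $s_{i+1}$ leaves an even path $P$ that carries $e_{i+1}$. A short parity count shows that if either endpoint of $e_{i+1}$ were matched outside $s_{i+1}$, then $P$ would be split into a segment of odd order, which cannot be matched within $P$; hence both endpoints of $e_{i+1}$ stay inside $s_{i+1}$, the path $P$ receives its unique perfect matching, and since $e_{i+1}$ lies in the parity class opposite to $e_i$ it is excluded, so $e_{i+1}\notin M$. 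In particular $s_{i+1}$ is not $M$-alternating, and the same hypothesis now holds verbatim for $s_{i+2}$. By induction: once an $M$-alternating $s_i$ has $e_i\notin M$, every $s_k$ with $k>i$ fails to be $M$-alternating, and the mirror statement holds to the left. This conclusion is insensitive to the fact that the terminal cycle of $B$ may abut a kink of $G$ lying outside $B$, because an $M$-alternating face cycle has \emph{all} of its own vertices matched internally.

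Running the same parity computation in reverse yields the key global fact that at most one shared edge of $B$ lies in $M$: if $e_l,e_{l'}\in M$ with $l<l'$, then $e_l$ matches its endpoints internally, and applying the cascade step repeatedly from $s_{l+1}$ rightward propagates $e_{l+1},e_{l+2},\ldots\notin M$, contradicting $e_{l'}\in M$. With these pieces the statement follows quickly. If $B$ has fewer than two $M$-alternating face cycles we are done; otherwise let $s_p$ and $s_q$ be the leftmost and rightmost $M$-alternating face cycles, $p<q$. Were $e_p\notin M$, the cascade would make every $s_k$ $(k>p)$ non-alternating, contradicting $s_q$; hence $e_p\in M$, and symmetrically $e_{q-1}\in M$. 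Since at most one shared edge of $B$ is in $M$, we get $e_p=e_{q-1}$, i.e. $q=p+1$; thus the two alternating cycles are neighbours and there are exactly two of them.

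The step I expect to be the main obstacle is the cascade lemma itself: making the parity count clean and fully general for arbitrary even face cycles (squares, hexagons, larger even polygons, and their mixtures) rather than for hexagonal chains alone, and arranging the induction so that the atypical terminal face cycles of $B$—those that border kinks of $G$ outside $B$—require no separate cases. Everything else is bookkeeping around observations (a), (b), the cascade, and the ``at most one shared edge in $M$'' consequence.
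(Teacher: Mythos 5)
Your argument is correct and rests on the same mechanism as the paper's (much terser) proof: an $M$-alternating face cycle constrains its shared edges, and the non-kink parity condition propagates that constraint along the chain, so that at most the immediate neighbour of an alternating face can itself be alternating. Your version just makes the propagation explicit via the cascade lemma and the ``at most one shared edge of $B$ lies in $M$'' fact, and then closes with the leftmost/rightmost pair instead of the paper's local three-face count around a single $s$.

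One concrete point to repair in the cascade step: you assume the deleted endpoints of $e_i$ are disjoint from $e_{i+1}$, so that the even path $P$ ``carries $e_{i+1}$.'' But opposite parity on the cycle $s_{i+1}$ is exactly what \emph{adjacent} edges have, so a non-kink face may well have $e_i$ and $e_{i+1}$ sharing a vertex --- indeed for quadrilateral faces this is the only non-kink configuration, so it is the generic case for linear chains of squares, not an edge case. When this happens $P$ does not contain $e_{i+1}$ and the ``segment of odd order'' count is vacuous. The conclusion survives trivially (the common vertex is already matched outside $s_{i+1}$, hence not by $e_{i+1}$, and the remaining degree-two vertices of $P$ still force the endpoint of $e_{i+1}$ inside $s_{i+1}$), but the case must be stated. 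You should also say explicitly that the vertices of $P$ other than the endpoints of $e_{i+1}$ have degree $2$ in $G$ --- that is what forces them to be matched along $P$ and makes the parity count bite.
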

\begin{proof}
Suppose $B$ has an $M$-alternating face cycle $s$.
Let $e$ be the right shared edge of $s$ (see Fig. \ref{attwo}). If $e\notin M$, then  no right face cycles of $s$ in $B$ are $M$-alternating. Suppose $e\in M$ and $s$ has a right neighboring face cycle $s'$. Then $s'$  may be $M$-alternating and the other right faces (if there exist) of $s$ in $B$ are not $M$-alternating (in fact, if $s'$ is non-kink, then $s'$ must be $M$-alternating). On the left side of $s$, the corresponding facts also hold.  So $B$ has three $M$-alternating face cycles only if both shared edges of $s$ belong to $M$. In this case, $s$ itself is a kink, i.e.,  a terminal face cycle of $B$. Hence the lemma holds.
\end{proof}

Let $G$ be a graph with a perfect matching $M$,
and $G^\prime$ is a subgraph of $G$ such that the restriction of $M$ to $G^\prime$ is a perfect matching of $G^\prime$.
Then $G-V(G')$ either has a perfect matching or is empty. If $G-V(G')$ has pendant edges (with an end  of degree one), then delete the ends of those pendant edges  and their incident edges from $G-V(G')$.
Repeating this process,
until we obtain a graph  without pendant edges,
denoted by $G\ominus G^\prime$.
Obviously, all these pendant edges happen in this process are contained in $M$.

\begin{figure}[h]
  \centering
    \includegraphics[width=120mm]{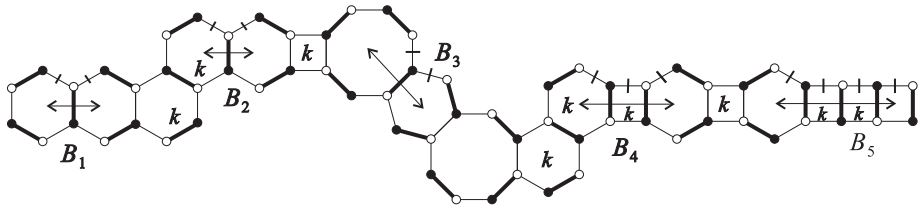}
       \caption{An all-kink decomposition ($B_1,B_2,B_3,B_4,B_5$) of $G$, kinks are marked by $k$ and bold edges form a perfect matching $F$.}
  \label{sequence}
\end{figure}

Let $B_1=s_1+s_2+\cdots+s_i$ ($1\leq i\leq n$)
be the first  maximal all-kink subchain of $G_1$ from the left terminal $s_1$.
That is, $B_1$ itself is an all-kink chain,
but $B_1+s_{i+1}$ is not.
If  $G_2:=G\ominus B_1\not=\emptyset$, then subchain $G_2$ has the first  maximal all-kink subchain $B_2$.
Let $G_3=G_2\ominus B_2$.
Keeping on this procedure, we  finally obtain an even polygonal chain $G_m$
such that $G_m\ominus B_m=\emptyset$,
where $B_m$ is the first maximal all-kink subchain of $G_m$. Then ($B_1,B_2,\ldots,B_m$)
is called an \emph{all-kink decomposition} of $G$ (see Fig. \ref{sequence}). Note that the last face cycle of each $B_t$ is not kink of $G$.
By using these notations,
we give the following result.

\begin{theorem}\label{maxn}{\bf .}
$Af(G)=\sum_{t=1}^{m}r(B_{t})$.
\end{theorem}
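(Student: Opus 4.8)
The plan is to prove $Af(G)=\sum_{t=1}^{m}r(B_t)$ by establishing matching upper and lower bounds, where the all-kink decomposition $(B_1,\dots,B_m)$ has been constructed precisely so that each $B_t$ is a maximal all-kink subchain. The core intuition is that by Lemma~\ref{cha} and Corollaries~\ref{hex}--\ref{poly}, an all-kink subchain $B_t$ contributes exactly $r(B_t)$ independent $M$-alternating cycles when it is given the perfect matching making every face cycle resonant; the decomposition $G\ominus B_t$ is engineered to ``peel off'' one such maximal block at a time while passing all its shared-edge matching information into the matching of the remainder.

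\emph{Upper bound.} First I would bound $Af(G)\le\sum_{t=1}^{m}r(B_t)$ using Lemma~\ref{two}. Fix any perfect matching $M$ and a maximum non-crossing compatible $M$-alternating set $\mathcal{A}_M$, so that $af(G,M)=|\mathcal{A}_M|$ by Theorem~\ref{compatibleset}. Every cycle of $\mathcal{A}_M$ encloses at least one $M$-alternating face cycle (Lemma~\ref{resonantface}), and by Lemma~\ref{minindex} every $M$-alternating face cycle lies in $\mathcal{A}_M$. The strategy is to charge each cycle of $\mathcal{A}_M$ to a face of $G$ and then to count, block by block in the decomposition, how many $M$-alternating faces a single maximal linear chain can host. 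Lemma~\ref{two} caps that number at two per maximal linear chain, with the two forced to be neighboring; translating this face count into a bound on $|\mathcal{A}_M|$ for each all-kink block $B_t$ should yield exactly $r(B_t)$ as the maximum number of compatible cycles that block can support, and summing gives the upper bound.

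\emph{Lower bound.} Conversely, I would exhibit one distinguished perfect matching $F$ of $G$ (the bold matching of Fig.~\ref{sequence}) achieving $af(G,F)=\sum_t r(B_t)$. The construction follows the decomposition: on each $B_t$ choose the matching that makes all $r(B_t)$ face cycles of $B_t$ simultaneously $F$-alternating (possible since $B_t$ is all-kink, by the same reasoning as in Corollaries~\ref{hex}--\ref{poly}), and check these local choices glue into a global perfect matching $F$ because the $\ominus$ operation only deletes pendant (forced, hence $M$-contained) edges. The union over all $t$ of the $r(B_t)$ face cycles of each $B_t$ then forms a compatible $F$-alternating set of total size $\sum_t r(B_t)$; compatibility across different blocks holds because distinct blocks share only fixed/pendant edges that are removed in the peeling. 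Theorem~\ref{antiforcingset} gives $af(G,F)\ge\sum_t r(B_t)$, and combined with the upper bound this forces equality, hence $Af(G)\ge af(G,F)=\sum_t r(B_t)$.

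\emph{Main obstacle.} The delicate step is the upper bound, specifically arguing that no perfect matching can beat the blockwise total. The danger is an $M$-alternating cycle of $\mathcal{A}_M$ that is a large non-face cycle straddling the boundary between two blocks $B_t$ and $B_{t+1}$, seemingly recovering cycles ``for free'' across the cut. The resolution should lean on the maximality of each all-kink block: because $B_t$ is the \emph{maximal} all-kink subchain, the face cycle $s_{i+1}$ immediately past it fails the kink condition, which constrains how an $M$-alternating cycle can pass the shared edge at the block boundary and prevents it from enclosing extra independent face cycles without violating either the non-crossing property or Lemma~\ref{minindex}. Making this boundary analysis precise---showing that the two-per-maximal-linear-chain bound of Lemma~\ref{two} aggregates to exactly $r(B_t)$ and that boundary-straddling cycles cannot increase the count---is where the real work lies, and I would handle it by an induction on $m$ paralleling the segment-decomposition induction in Theorem~\ref{min}, reducing $G$ to $G\ominus B_1$ and invoking the inductive hypothesis on the remainder.
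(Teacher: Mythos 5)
Your lower bound is exactly the paper's: build $F$ so that every face cycle of every $B_t$ is $F$-alternating, observe these faces form a compatible $F$-alternating set of size $\sum_t r(B_t)$, and apply Theorem~\ref{compatibleset}. The gap is in the upper bound. You correctly identify the central danger---a large non-face cycle of $\mathcal{A}_M$ straddling the boundary between blocks---but you do not resolve it; you only say that a ``boundary analysis'' using maximality of the blocks and the non-crossing property ``is where the real work lies.'' The paper's resolution is different and decisive: Corollary~\ref{max} (a consequence of Lemma~\ref{gap1}) says that for any $M$ attaining $Af(G)$, the set $\mathcal{A}_M$ contains \emph{no} non-face cycles at all, since a non-face cycle in $\mathcal{A}_M$ would let one flip a $4$-cycle inside it and strictly increase the anti-forcing number, contradicting maximality. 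Hence $af(G,M)$ is just the number of $M$-resonant faces, and the entire upper bound collapses to a face-counting argument. You never invoke Corollary~\ref{max}, and without it (or an equivalent substitute) your charging scheme for non-face cycles is not a proof.

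A second, smaller omission: even after reducing to counting $M$-resonant faces block by block, your peeling induction needs the restriction of $M$ to $H_1=B_1+s_{i+1}+\cdots+s_{j-1}$ and to $G_2=G\ominus B_1$ to be perfect matchings of those subchains, and this can fail. The paper handles this (its Case~2) by noting that when both restrictions fail, the boundary kink $s_j$ is $M$-alternating with neither shared edge in $M$, and replacing $M$ by $M\triangle s_j$ preserves $af(G,\cdot)=Af(G)$ while restoring the splitting. Your proposal does not address how $M$ decomposes across the cut, so the inductive hypothesis cannot be applied as stated. Finally, note that your suggested aggregation ``two $M$-alternating faces per maximal linear chain, summing to $r(B_t)$'' is not how the paper uses Lemma~\ref{two} (which appears only in the base case $k=0$ and the terminal case $G\ominus B_1=\emptyset$); consecutive maximal linear chains overlap in a face, so that count does not sum cleanly without the induction.
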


\begin{proof}
Let $k$ be the number of kinks of $G$.
We proceed by induction on $k$.
We first consider the case $k=0$. Then $G$ is a linear chain. By Corollary \ref{max} and Lemma \ref{two},
$Af(G)\leq 2$.
If $G$ is a single cycle, the result is trivial.
So assume  $r(G)\geq 2$. Then $r(B_1)=2$  and $G\ominus B_1=\emptyset$.
$G$ has a perfect matching $M$  such
that first two  face cycles are $M$-alternating. Hence  $2\leq af(G,M)\leq Af(G)$.
So $Af(G)=r(B_1)=2$.

Suppose  $k\geq 1$. $G$ has a perfect matching $F$ such that all shared edges of each $B_t$ ($1\leq t\leq m$) belong to $M$.
So all face cycles  of each $B_t$ are $F$-alternating.
So those $F$-alternating face cycles form
a compatible $F$-alternating set of $G$.
By Theorem \ref{compatibleset},
$\sum_{t=1}^{m}r(B_{t})\leq af(G,F)\leq Af(G)$.
In the following,
we want to prove $\sum_{t=1}^{m}r(B_{t})\geq Af(G)$. To this end, we choose a perfect matching $M$ of $G$ such that $af(G,M)=Af(G)$.

If $G\ominus B _1=\emptyset$, then $G-B_1$ is empty or a linear chain.
Let $s_{i-1}$ and $s_i$ be the last two face cycles of $B_1$.
Since $k\geq1$, $i\geq3$ and  $s_{i-1}$ is the last kink of $G$.
So $J=s_{i-1}+s_i+(G-B_1)$ is a maximal linear chain of $G$.
By Lemma \ref{two},
there are at most two $M$-resonant faces in $J$, and at most $(r(B_1)-2)$
 $M$-resonant faces in $B_1-s_{i-1}-s_i$.
By Corollary \ref{max}, $Af(G)\leq r(B_1)$.

Now suppose $G\ominus B _1\neq\emptyset$.
Note that the last face cycle $s_i$ of $B_1$
is not a kink of $G$.
So $G$ must have a kink after $s_i$.
Let $s_j$ ($i<j<n$) be the first kink of $G$ in $G-B_1$.
Then $G_2=G\ominus B_1=G-s_1-s_2-\cdots-s_j$.
Note that ($B_2,B_3,\ldots,B_m$) is an all-kink decomposition of $G_2$. Since $G_2$ has less kinks than $G$,
by the induction hypothesis,
$Af(G_2)=\sum_{t=2}^{m}r(B_t)$.

Let $H_1=B_1+s_{i+1}+s_{i+2}+\cdots+s_{j-1}$.
Then $G=H_1+s_j+G_2$.
Since $s_j$ is a kink of $G$,
there are two cases to be considered.

\textbf{Case 1.} The restrictions of $M$ to $H_1$ and $G_2$
are their perfect matchings.
By the induction hypothesis and Corollary \ref{max},
$H_1$ and $G_2$ contain at most $r(B_1)$ and $\sum_{t=2}^{m}r(B_t)$ $M$-alternating face cycles respectively.
If $s_j$ is not $M$-alternating,
then $G$ contains at most $\sum_{t=1}^{m}r(B_{t})$ $M$-alternating face cycles.
If $s_j$ is $M$-alternating,
then the two shared edges of $s_j$ both belong to $M$, and
 the restriction of $M$ to $H_1+s_j$  is also its perfect matching. Since $B_1$ itself is an all-kink decomposition of $H_1+s_j$,
$H_1+s_j$ also contains at most $r(B_1)$
$M$-alternating face cycles.
So $G$ also contains at most $\sum_{t=1}^{m}r(B_{t})$
$M$-alternating face cycles. By Corollary \ref{max},
$Af(G)\leq \sum_{t=1}^{m}r(B_{t})$.

\textbf{Case 2.}  The restrictions of $M$ to $H_1$ and $G_2$
both are not their perfect matchings. So
$s_j$ is $M$-alternating, and none of the  shared edges of $s_j$
 are  in $M$.
Note that $M\triangle s_j$ is a perfect matching
of $G$ such that the two shared edges of $s_j$
both are in $M\triangle s_j$.
So $|\mathcal{A}_{M\triangle s_j}|\geq|\mathcal{A}_{M}|$.
By Theorem \ref{compatibleset},
$af(G,M\triangle s_j)\geq af(G,M)=Af(G)$,
which implies that $af(G,M\triangle s_j)=Af(G)$.
Treating $M\triangle s_j$ as $M$ in Case 1, we  have the required result. 
\end{proof}

An all-kink decomposition of an even polygonal chain can be accomplished in a linear time. So Theorem \ref{maxn} provide
a linear algorithm to find its maximal anti-forcing
number.
Combining Theorems \ref{continuous} with the above linear algorithms to compute the minimum and maximum antiforcing numbers of   even polygonal chains, we have the following conclusion.

\begin{corollary}{\bf .}
The anti-forcing spectrum of an even polygonal chain
can be determined in  linear time.
\end{corollary}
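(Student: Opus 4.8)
The plan is to assemble the three principal results of this section into a single linear-time procedure. First I would appeal to Theorem \ref{continuous}: since the anti-forcing spectrum of $G$ has no gaps, it is exactly the integer interval
$$\text{Spec}_{af}(G)=\{af(G),af(G)+1,\ldots,Af(G)\}.$$
Consequently the spectrum is completely described by its two endpoints $af(G)$ and $Af(G)$, and it suffices to compute these two numbers in linear time.

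Next I would compute $af(G)$. By Theorem \ref{min}, $af(G)=t$, where $(L_1,\ldots,L_t)$ is the segment decomposition of $G$. This decomposition is produced by a single left-to-right scan of the face cycles $s_1,s_2,\ldots,s_n$: at each face cycle one performs only the local tests of whether it is a kink and, if so, whether it is a $4$-cycle, which is precisely the information on which the segment boundaries depend. Hence $t$, and therefore $af(G)$, is obtained in $O(n)$ time. Similarly, by Theorem \ref{maxn}, $Af(G)=\sum_{t=1}^{m}r(B_t)$, where $(B_1,\ldots,B_m)$ is the all-kink decomposition of $G$. This decomposition is again built by one sweep from the left terminal: one grows the maximal all-kink subchain $B_1$, applies the reduction $\ominus$ (which merely strips the pendant edges forced into the matching, and so costs linear total work), and repeats on the remaining chain. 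Accumulating the cyclomatic numbers $r(B_t)$, each equal to the number of face cycles of $B_t$, during the sweep yields $Af(G)$ in $O(n)$ time.

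Combining these, I would read off $\text{Spec}_{af}(G)$ as the integer interval between the two computed endpoints, so the whole spectrum is determined in linear time. The only step that requires genuine verification is that both decompositions are truly linear, i.e.\ that detecting kinks and terminals and carrying out the $\ominus$ reductions are constant-work, local operations per face cycle; I do not expect a deep combinatorial obstacle here, since all of the structural content already resides in Theorems \ref{continuous}, \ref{min}, and \ref{maxn}, and only the bookkeeping of the sweeps remains to be checked.
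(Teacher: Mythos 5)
Your proposal is correct and follows exactly the paper's argument: Theorem \ref{continuous} reduces the spectrum to the interval $[af(G),Af(G)]$, and Theorems \ref{min} and \ref{maxn} supply the linear-time computations of the two endpoints via the segment decomposition and the all-kink decomposition, respectively. The additional remarks you make about the linearity of the two left-to-right sweeps are just the bookkeeping the paper leaves implicit.
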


For instance, anti-forcing spectrum  of even polygonal chain $G$ in Fig. \ref{sequence}
is an integer interval $[6,13]$.

\end{document}